\newtheorem{theorem}{Theorem}[section]
\newtheorem{proposition}[theorem]{Proposition}
\newtheorem{corollary}[theorem]{Corollary}
\newtheorem{lemma}[theorem]{Lemma}
\theoremstyle{definition}
\newtheorem{remark}[theorem]{Remark}
\newtheorem{definition}[theorem]{Definition}
\makeatletter \@addtoreset{equation}{section}
\DeclareMathOperator{\End}{End}
\DeclareMathOperator{\Hom}{Hom}
\newcommand{\BF}{\mathbb{F}}
\newcommand{\BN}{\mathbb{N}}
\newcommand{\BZ}{\mathbb{Z}}
\newcommand{\BC}{\mathbb{C}}
\newcommand{\fg}{\mathfrak{g}}
\newcommand{\fh}{\mathfrak{h}}
\newcommand{\D}{\mathcal{D}}
\newcommand{\B}{\mathcal{B}}
\newcommand{\bk}{\mathbf{k}}
\newcommand{\1}{\mathbf{1}}
\newcommand{\+}[1]{\langle#1\rangle}
\newcommand{\del}[2]{\delta\left(\frac{#1}{#2}\right)}
\newtoks\@enLab  
\def\@enQmark{?}
\def\@enLabel#1#2{%
  \edef\@enThe{\noexpand#1{\@enumctr}}%
  \@enLab\expandafter{\the\@enLab\csname the\@enumctr\endcsname}%
  \@enloop}
\def\@enSpace{\afterassignment\@enSp@ce\let\@tempa= }
\def\@enSp@ce{\@enLab\expandafter{\the\@enLab\space}\@enloop}
\def\@enGroup#1{\@enLab\expandafter{\the\@enLab{#1}}\@enloop}
\def\@enOther#1{\@enLab\expandafter{\the\@enLab#1}\@enloop}
\def\@enloop{\futurelet\@entemp\@enloop@}
\def\@enloop@{%
  \ifx A\@entemp         \def\@tempa{\@enLabel\Alph  }\else
  \ifx a\@entemp         \def\@tempa{\@enLabel\alph  }\else
  \ifx i\@entemp         \def\@tempa{\@enLabel\roman }\else
  \ifx I\@entemp         \def\@tempa{\@enLabel\Roman }\else
  \ifx 1\@entemp         \def\@tempa{\@enLabel\arabic}\else
  \ifx \@sptoken\@entemp \let\@tempa\@enSpace         \else
  \ifx \bgroup\@entemp   \let\@tempa\@enGroup         \else
  \ifx \@enum@\@entemp   \let\@tempa\@gobble          \else
                         \let\@tempa\@enOther
             \fi\fi\fi\fi\fi\fi\fi\fi
  \@tempa}
\newlength{\@sep} \newlength{\@@sep}
\providecommand{\sfbc}{\rmfamily\upshape}
\providecommand{\sfn}{\rmfamily\upshape}
\def\@enfont{\ifnum \@enumdepth >1\let\@nxt\sfn \else\let\@nxt\sfbc \fi\@nxt}
\def\enumerate{%
   \ifnum \@enumdepth >3 \@toodeep\else
      \advance\@enumdepth \@ne
      \edef\@enumctr{enum\romannumeral\the\@enumdepth}\fi
   \@ifnextchar[{\@@enum@}{\@enum@}}
\def\@@enum@[#1]{%
  \@enLab{}\let\@enThe\@enQmark
  \@enloop#1\@enum@
  \ifx\@enThe\@enQmark\@warning{The counter will not be printed.%
   ^^J\space\@spaces\@spaces\@spaces The label is: \the\@enLab}\fi
  \expandafter\edef\csname label\@enumctr\endcsname{\the\@enLab}%
  \expandafter\let\csname the\@enumctr\endcsname\@enThe
  \csname c@\@enumctr\endcsname7
  \expandafter\settowidth
            \csname leftmargin\romannumeral\@enumdepth\endcsname
            {\the\@enLab\hskip\labelsep}%
  \@enum@}
\def\@enum@{\list{{\@enfont\csname label\@enumctr\endcsname}}%
           {\usecounter{\@enumctr}\def\makelabel##1{\hss\llap{##1}}%
     \ifnum \@enumdepth>1\setlength{\topsep}{\@@sep}\else
           \setlength{\topsep}{\@sep}\fi
     \ifnum \@enumdepth>1\setlength{\itemsep}{0pt plus1pt minus1pt}%
      \else \setlength{\itemsep}{\@@sep}\fi
     \setlength{\parsep}{0pt plus1pt minus1pt}%
     \setlength{\parskip}{0pt plus1pt minus1pt}
                   }}
\def\endenumerate{\par\ifnum \@enumdepth >1\addvspace{\@@sep}\else
           \addvspace{\@sep}\fi \endlist}
\begin{document}
\title{Heisenberg VOAs over Fields of Prime Characteristic and Their Representations}
\author{Haisheng Li\footnote{Partially supported by China NSF grant No. 11471268}\\
Department of Mathematical Sciences, Rutgers University,\\
Camden, NJ 08102, USA, and\\
School of  Mathematical Sciences, Xiamen University, Xiamen 361005, China\\
\textit{E-mail address:} \texttt{hli@camden.rutgers.edu}\and
Qiang Mu\\
School of Mathematical Sciences, Harbin Normal University,\\
Harbin, Heilongjiang 150080, China\\
\textit{E-mail address:} \texttt{qmu520@gmail.com}}
\maketitle

\begin{abstract}
In this paper, we study Heisenberg vertex algebras over fields of prime characteristic.
The new feature is that the Heisenberg vertex algebras are no longer simple unlike in the case of characteristic zero.
We then study a family of simple quotient vertex algebras and
we show that for each such simple quotient vertex algebra,
irreducible modules are unique up to isomorphism and every module is completely reducible.
To achieve our goal, we also establish a complete reducibility theorem for a certain category of modules
over Heisenberg algebras.
\end{abstract}

\section{Introduction}

Vertex (operator) algebras  have been extensively studied for about three decades since they were introduced
in the late 1980's (see \cite{B86}, \cite{FLM}, \cite{FHL}). In the past,  studies have been mainly focused on vertex algebras
over the field of complex numbers, or sometimes over a general field of characteristic zero.
(Note that the notion of vertex algebra, introduced by Borcherds in \cite{B86}, is over an arbitrary field.)
Vertex algebra has deep connections with numerous fields in mathematics and physics,
and  it has been proved to be an interesting and fruitful  research field.
Among the important examples of vertex operator algebras are those associated to
infinite-dimensional Lie algebras such as affine  Lie algebras including Heisenberg algebras
and the Virasoro algebra
(see \cite{FZ}), and those associated to positive definite even lattices (see \cite{FLM}).

Recently,  Dong and Ren in a series of papers studied vertex algebras over an arbitrary field, in particular over a field of prime characteristic. More specifically, Dong and Ren studied the representations for a general vertex algebra over an
arbitrary field in \cite{DR1} and they studied vertex algebras associated to the Virasoro algebra in \cite{DR2}.
Previously, Dong and Griess (see \cite{DG}) studied integral forms of vertex algebras (over a field of characteristic zero).
Based on \cite{DG}, Mu (see \cite{M}) studied vertex algebras over fields of prime characteristic
obtained from integral forms of lattice vertex operator algebras.

In this paper, we study Heisenberg vertex algebras and their representations over a general field
of  prime characteristic. Though many results for characteristic zero still hold for prime characteristic,
there are interesting new features. For example, for a nonzero level, Heisenberg vertex algebras are no longer simple,
unlike in the case of characteristic zero.
We then study a family of simple quotient vertex algebras.
We show that for each such simple quotient vertex algebra,
irreducible modules are unique up to isomorphism and every module is completely reducible.
To achieve this we obtain a complete reducibility theorem for a category of affine Heisenberg Lie algebras.

We now give a more detailed count of the contents of this paper.
Let $\BF$ be any field and let $\fh$ be a finite-dimensional vector space over $\BF$
equipped with a non-degenerate symmetric bilinear form
$\langle\cdot,\cdot\rangle$. Associated to the pair $(\fh,\langle\cdot,\cdot\rangle)$,
one has an affine Lie algebra $\widehat{\fh}=\fh\otimes \BF[t,t^{-1}]\oplus \BF \bk$, where $\bk$ is central and
$$[\alpha\otimes t^{m},\beta\otimes t^{n}]=m\langle\alpha,\beta\rangle \delta_{m+n,0}\bk$$
for $\alpha,\beta\in \fh,\ m,n\in \BZ$. For any $\ell\in \BF$, one has a vertex algebra $V_{\widehat{\fh}}(\ell,0)$
whose underlying vector space is
the $\widehat{\fh}$-module generated by a distinguished vector ${\bf 1}$, subject to relations
$$\bk \cdot {\bf 1}=\ell{\bf 1}\   \   \mbox{ and }\   \  (\fh\otimes \BF[t]){\bf 1}=0.$$
It was known (cf. \cite{LL}) that for $\BF=\BC$,
$V_{\widehat{\fh}}(\ell,0)$ is a simple vertex operator algebra for every nonzero $\ell$ and these are all isomorphic.
As for the Lie algebra $\widehat{\fh}$ with $\BF = \BC$,
the subalgebra $\widehat{\fh}'=\sum_{n\ne 0}\fh\otimes \BF t^{n}+ \BF \bk$ is a Heisenberg algebra and
 there is an important complete reducibility theorem for a certain category of
$\widehat{\fh}'$-modules (see \cite{lw2}, \cite{kac}). This complete reducibility theorem is very useful in studying
the representations of affine Kac-Moody algebras (see \cite{lw2}) and in studying the vertex algebras and their
 representations associated to even lattices (see \cite{FLM}, \cite{dong93}, \cite{dong94}).

For a prime characteristic, however, the situation is quite different. One difference is that
the subalgebra $\widehat{\fh}'=\sum_{n\ne 0}\fh\otimes \BF t^{n}+ \BF \bk$
 has an infinite-dimensional center, which is no longer a Heisenberg algebra.
 (Note that if ${\rm char}\;\BF =p$, $\fh\otimes \BF [t^{p},t^{-p}]$  is contained in the center of $\widehat{\fh}$.)
Due to this, vertex algebra $V_{\widehat{\fh}}(\ell,0)$ contains infinitely many maximal ideals.
Assume ${\rm char}\;\BF=p$ (a prime). Set $\widehat{\fh}_{+}=\fh\otimes t^{-1}\BF[t^{-1}]$.
For $u\in \fh,\ n\in \BZ$, we write $u(n)$ for $u\otimes t^{n}$ as usual.
Let $\lambda\in (\widehat{\fh}_{+})^{*}$. Denote by $J(\lambda)$ the $\widehat{\fh}$-submodule of $V_{\widehat{\fh}}(\ell,0)$, generated by vectors
$$(u(-np)-\lambda(u(-np))){\bf 1},\   \   \left(u(-n)-\lambda(u(-n))\right)^{p}{\bf 1}$$
for $u\in \fh,\ n\ge 1$. It is proved in this paper that $J(\lambda)$ is a maximal $\widehat{\fh}$-submodule of
$V_{\widehat{\fh}}(\ell,0)$ and
these exhaust all the maximal $\widehat{\fh}$-submodules.
Furthermore, it is proved that $J(\lambda)$ is an ideal of vertex algebra $V_{\widehat{\fh}}(\ell,0)$
if and only if $\lambda(u(-n))=0$ for all $u\in \fh,\ n\ge 2$. Then for any such $\lambda$, the quotient of
$V_{\widehat{\fh}}(\ell,0)$ by $J(\lambda)$ gives us a
simple vertex algebra $L_{\widehat{\fh}}(\ell,0,\lambda)$. As our main results, we show that
every $L_{\widehat{\fh}}(\ell,0,\lambda)$-module
is completely reducible and the adjoint module is the only irreducible module up to equivalence.

On the other hand, set
$$\widehat{\fh}''=\sum_{n\notin p\BZ}\fh\otimes \BF t^{n}+ \BF \bk,$$
which is a Heisenberg algebra.
To achieve our main results, we establish and use a complete reducibility theorem for a certain category of
$\widehat{\fh}''$-modules. We give an analogue of the canonical realization of Heisenberg algebras in terms of differential operators and multiplication operators.

This paper is organized as follows:
In Section 2, we present some basic results on vertex algebras of prime characteristic.
In Section 3, we discuss vertex algebras associated to affine Lie algebras. In Section 4, we study a category of
modules for Heisenberg algebras.
In Section 5, we study a family of simple quotient Heisenberg vertex algebras  and determine their irreducible modules.

\section{Basics}
In this section, we present some basic results on vertex algebras over an arbitrary field, especially over a field of prime characteristic.
The proofs for most of these results are the same as those for characteristic zero, which can be found in \cite{LL} for example.
For completeness, we also provide some expository details.

Let $\BF$ be an arbitrary field. Throughout this paper, $x,y,z,x_0,x_1,x_2,\ldots$ are mutually commuting independent formal variables.
Vector spaces are considered to be over $\BF$. 
If the letter $p$ appears in some expression, by default $\BF$  is assumed to be of characteristic $p$.
We use the usual symbols $\BZ$ for the integers, $\BZ_+$ for the positive integers,
and $\BN$ for the nonnegative integers.
 For $n\in\BZ$ and $k\in \BN$, we consider the binomial coefficient
\begin{equation*}
    \binom{n}{k}=\frac{n(n-1)\cdots(n-k+1)}{k!},
\end{equation*}
which is an integer, as a number in the prime subfield of $\BF$.
Furthermore, for $n\in\BZ$, define $(x+y)^n$  to be the formal series
\begin{equation}\label{ebinom-expansion}
    (x+y)^n=\sum_{k\in\BN}\binom{n}{k}x^{n-k}y^k.
\end{equation}
We shall frequently use Lucas' theorem which states that
for any  $$m=m_0+m_1p+\cdots+m_k p^k,\  \  \  n=n_0+n_1 p+\cdots+n_k p^k
\in\BN,$$
where $p$ is a prime number, $k\in \BN$, $0\le m_i,\  n_i\le p-1$, we have
 \begin{eqnarray}
 \binom{m}{n}\equiv \prod_{i=0}^k \binom{m_i}{n_i} \pmod{p}.
 \end{eqnarray}
In particular, this implies that $\binom{m}{n}=0$ if $m\in p\BZ_+,\ n\notin p\BZ_+$.

For $k\in \BN$, we denote by $\partial_x^{(k)}$ the $k$-th Hasse derivative with respect to $x$, that is,
\begin{equation}
    \partial_x^{(k)}x^m=\binom{m}{k}x^{m-k}
\end{equation}
for $m\in\BZ$. The following relation holds for $m,n\in \BN$:
\begin{equation}\label{eq:partialm}
    \partial_x^{(m)}\partial_x^{(n)}=\binom{m+n}{m}\partial_x^{(m+n)}.
\end{equation}
We formally denote
\begin{equation}
    e^{x_0\partial_x}=\sum_{n\ge0}x_0^n \partial_x^{(n)}.
\end{equation}
Just as in the case of characteristic zero, the following formal Taylor formula holds:
\begin{equation}
    e^{x_0\partial_x}a(x)=a(x+x_0)
\end{equation}
 for $a(x)\in U[[x,x^{-1}]]$, where $U$ is any vector space over $\BF$.

Recall the formal delta-function
$$\delta(x)=\sum_{n\in\BZ}x^n.$$
For $m,n\in\BN$, we have
\begin{equation}\label{eq:partial02}
 \partial_{x_2}^{(n)}x_2^{-1}\del{x_1}{x_2}=(x_1-x_2)^{-n-1}-(-x_2+x_1)^{-n-1}
 =(-1)^n\partial_{x_1}^{(n)}x_2^{-1}\del{x_1}{x_2}.
\end{equation}

The following definition was due to Borcherds (see \cite{B86}):

\begin{definition} \label{def:B86VA}
A \emph{vertex algebra} is a vector space $V$ equipped with a distinguished vector $\1$,
linear operators $\D^{(m)}$ on $V$ for $m\in\BZ$, and bilinear operations
$(u,v)\mapsto u_nv$ from $V\times V$ to $V$ for $n\in\BZ$,
satisfying the following conditions (i)--(v)
for $u,v,w\in V$ and $m,n\in\BZ$:
\begin{enumerate}[(i)]
\item $u_nv=0$ for $n$ sufficiently large.
\item $\1_nv=\delta_{n,-1}v$.
\item $u_n\1=\D^{(-n-1)}u$.
\item $u_nv=\sum_{i\ge0}(-1)^{i+n+1}\D^{(i)}v_{n+i}u$.
\item $(u_mv)_nw=\sum_{i\ge0}(-1)^i\binom{m}{i}(u_{m-i}v_{n+i}w-(-1)^m v_{m+n-i}u_iw)$.
\end{enumerate}
\end{definition}

The following are some (immediate)  consequences (see \cite{LL}, page 90; cf. \cite{B86}):

\begin{lemma}\label{rm:defVA}
Let $V$ be  a vertex algebra. Then
\begin{enumerate}[(1)]
\item $u_{n}\1=0$ for $u\in V,\ n\in \BN$.
\item $\D^{(n)}=0$ for $n<0$.
\item $\D^{(0)}=1_V$, or equivalently, $u_{-1}\1=u$ for $u\in V$.
\item $\D^{(m)}\D^{(n)}=\binom{m+n}{n}\D^{(m+n)}$ for $m,n\in \BN$.
\item $\D^{(n)}(u_{m}v)=\sum_{i=0}^{n}(-1)^{i}\binom{m}{i}u_{m-i}\D^{(n-i)}v$ for $n\in \BN,\ u,v\in V,\ m\in \BZ$.
\end{enumerate}
\end{lemma}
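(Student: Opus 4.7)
The plan is to deduce all five parts directly from Borcherds' axioms (i)--(v), each time specializing one or two of $u,v,w,m,n$ and invoking the vacuum axiom (ii) to trivialize a Kronecker delta.

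For (1), I apply skew-symmetry (iv) to $u_n\mathbf{1}$: the general term of the resulting sum contains the factor $\mathbf{1}_{n+i}u=\delta_{n+i,-1}u$, and for $n\ge 0$ no $i\ge 0$ meets $n+i=-1$, so $u_n\mathbf{1}=0$. Part (2) is then immediate from (1) together with the relation $u_n\mathbf{1}=\D^{(-n-1)}u$ from axiom (iii): for $n\ge 0$ this forces $\D^{(-n-1)}u=0$ for every $u\in V$, i.e.\ $\D^{(m)}=0$ whenever $m<0$.

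Part (3) is the main obstacle. My plan has two stages. First, take (v) with $v=\mathbf{1}$ and $m=-1$; using (ii), part (1), and the identity $(-1)^{i}\binom{-1}{i}=1$, the double sum collapses to
\[
(u_{-1}\mathbf{1})_n w = u_n w \qquad (n\in\BZ,\ w\in V),
\]
which by (iii) reads $(\D^{(0)}u)_n w=u_n w$. Second, substitute this into the right-hand side of skew-symmetry (iv) applied to $v_m(\D^{(0)}u)$: the resulting series is literally the one for $v_m u$, so $v_m(u-\D^{(0)}u)=0$ for all $v\in V$ and $m\in\BZ$. Specializing $v=\mathbf{1}$, $m=-1$ and applying (ii) then yields $u-\D^{(0)}u=\mathbf{1}_{-1}(u-\D^{(0)}u)=0$. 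The subtle point is that the relation $(\D^{(0)}u)_n w=u_n w$ on its own only shows $\D^{(0)}$ to be idempotent; it is skew-symmetry that promotes this to the identity.

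For (4), I specialize (v) to $v=w=\mathbf{1}$. Part (1) kills the half of the sum containing $u_i\mathbf{1}$, and the factor $\mathbf{1}_{n+i}\mathbf{1}=\delta_{n+i,-1}\mathbf{1}$ leaves only a single value of $i$. Rewriting both sides via (iii) twice turns the left side into $\D^{(-n-1)}\D^{(-m-1)}u$ and the right side into a single binomial multiple of $\D^{(-m-n-2)}u$; after relabelling $a=-m-1,\ b=-n-1\in\BN$ and using the classical identity $\binom{-a-1}{b}=(-1)^{b}\binom{a+b}{b}$, the claimed formula emerges. For (5), I use (iii) to write $\D^{(n)}(u_m v)=(u_m v)_{-n-1}\mathbf{1}$, expand by (v) with $w=\mathbf{1}$, drop the $u_i\mathbf{1}$ terms using (1), convert each $v_{-n-1+i}\mathbf{1}$ into $\D^{(n-i)}v$ via (iii), and note that (2) truncates the sum at $i=n$.
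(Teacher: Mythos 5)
Your derivation is correct in all five parts: each is obtained by specializing Borcherds' axioms (i)--(v) at the vacuum, which is exactly the standard route that the paper itself does not spell out but delegates to [LL], p.~90. In particular your two-stage argument for (3) --- first showing $(\D^{(0)}u)_nw=u_nw$ from axiom (v) with $m=-1$, $v=\1$, then upgrading this to $\D^{(0)}u=u$ via skew-symmetry (iv) and the vacuum axiom (ii) --- correctly handles the one genuinely non-trivial point, and your use of $\binom{-a-1}{b}=(-1)^b\binom{a+b}{b}$ in (4) is valid since the paper's binomial coefficients are defined for arbitrary integer upper entry.
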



Recall that a {\em coalgebra} is a vector space $C$ equipped with two linear maps
$\Delta: C\rightarrow C\otimes C$, called the {\em co-multiplication}, and $\varepsilon: C\rightarrow \BF$, called
the {\em co-unit}, such that
$$(1\otimes \Delta)\Delta= (\Delta\otimes 1)\Delta, \  \  \  (1\otimes \varepsilon)\Delta=1, \  \  \
(\varepsilon \otimes 1)\Delta=1,$$
(where $1$ is the identity map on $C$).
Furthermore, a {\em bialgebra} is an associative algebra $B$ with unit $1$ which is
a coalgebra at the same time such that the co-multiplication $\Delta$ and co-unit $\varepsilon$
are both algebra homomorphisms.

\begin{remark}\label{rdefinition-B}
Let $\B$ be a vector space with basis $\{\D^{(n)}\mid n\in\BN\}$.
It is well known (cf. \cite{B86}) that $\B$ is a bialgebra with
\begin{eqnarray}
   && \D^{(m)}\cdot  \D^{(n)}=\binom{m+n}{n}\D^{(m+n)}, \quad
    \D^{(0)}=1,\label{eD-multiplication}\\
 &&   \Delta(\D^{(n)})=\sum_{i=0}^n \D^{(n-i)}\otimes \D^{(i)}, \quad
    \varepsilon(\D^{(n)})=\delta_{n,0}\label{ecoproduct-D}
\end{eqnarray}
for $m,n\in\BN$.
\end{remark}

In view of Lemma \ref{rm:defVA}, each vertex algebra $V$ is naturally a $\B$-module.
Following \cite{DR1}, we set
\begin{equation}
    e^{x\D}=\sum_{n\ge0}x^n \D^{(n)}\in \B[[x]],
\end{equation}
so that $e^{x\D}$ acts on every $\B$-module and especially acts on every vertex algebra.
As in the ordinary case, we have
\begin{align*}
 e^{x\D} e^{z\D}=e^{(x+z)\D}\  \   \ \mbox{ and }\  \  \  e^{x\D}e^{-x\D}=1,
\end{align*}
which are due to (\ref{eD-multiplication}), (\ref{ebinom-expansion}), and to the fact that
$\sum_{n=0}^{k}(-1)^{n}\binom{k}{n}=\delta_{k,0}$ for $k\in \BN$.
On the other hand, (\ref{ecoproduct-D}) can be written as
\begin{eqnarray}\label{ecoproduct}
\Delta(e^{x\D})=e^{x\D}\otimes e^{x\D}\   \mbox{ and } \   \varepsilon(e^{x\D})=1.
\end{eqnarray}
(Informally, $e^{x\D}$ is group-like.)
Note that
\begin{equation}
    \partial_x^{(n)} e^{x\D}=\D^{(n)} e^{x\D}\  \  \   \mbox{ for }n\in \BN.
\end{equation}

Using Lemma \ref{rm:defVA} (5) and Definition \ref{def:B86VA} (iv),
as it was mentioned in \cite{DR1}, we immediately have:

\begin{lemma}
Let $V$ be a vertex algebra. For $u\in V$, set
$$Y(u,x)=\sum_{n\in \BZ}u_{n}x^{-n-1}\in (\End V)[[x,x^{-1}]],$$
where $u_{n}$ denote the corresponding linear operators on $V$. Then the following
{\em skew symmetry} and {\em conjugation formula} hold  for $u,v\in V$:
\begin{eqnarray}
&&  Y(u,x)v=e^{x\D}Y(v,-x)u,\label{eskew-symmetry}\\
&&e^{x_0\D}Y(u,x)e^{-x_0\D}=e^{x_0\partial_x}Y(u,x).
\end{eqnarray}
Furthermore,  the following relations hold for $u\in V$:
\begin{eqnarray}
&&Y(v,z){\bf 1}=e^{z\D}v\   \   \   \mbox{ for }v\in V,\\
&&  e^{x_0\D}Y(u,x)e^{-x_0\D}=  Y(e^{x_0\D}u,x)=e^{x_0\partial_x}Y(u,x)=Y(u,x+x_0).
\end{eqnarray}
In particular, we have
\begin{equation}
 [\D^{(1)},Y(u,x)]= Y(\D^{(1)} u,x)=\frac{d}{dx} Y(u,x).
\end{equation}
\end{lemma}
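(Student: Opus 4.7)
The plan is to build up the five assertions from Definition \ref{def:B86VA} and Lemma \ref{rm:defVA} in stages. Skew symmetry is essentially a repackaging of Definition \ref{def:B86VA}(iv): I multiply $u_n v = \sum_{i\ge 0}(-1)^{i+n+1}\D^{(i)}v_{n+i}u$ by $x^{-n-1}$, sum over $n\in\BZ$, substitute $m = n+i$, and observe that the sign factor regroups as $(-1)^{m+1}x^{-m-1} = (-x)^{-m-1}$, so the right-hand side becomes $\sum_{i\ge 0}x^i\D^{(i)}\cdot Y(v,-x)u = e^{x\D}Y(v,-x)u$. The vacuum identity $Y(v,z)\1 = e^{z\D}v$ is immediate from Definition \ref{def:B86VA}(iii) together with Lemma \ref{rm:defVA}(2): the only nonzero contributions to $\sum_n v_n \1 \cdot z^{-n-1} = \sum_n \D^{(-n-1)}v \cdot z^{-n-1}$ come from $n\le -1$, and after reindexing this is $\sum_{k\ge 0}z^k\D^{(k)}v$.

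The main technical step is the conjugation formula, and here I expect the main difficulty: in prime characteristic one cannot integrate the commutator identity $[\D^{(1)},Y(u,x)]=\frac{d}{dx}Y(u,x)$ to recover the exponential identity, since $\D^{(1)}$ alone does not determine the higher Hasse operators $\D^{(n)}$. Instead I exploit Lemma \ref{rm:defVA}(5) directly, which already packages the action of every $\D^{(n)}$ on an iterate. Applying it to $u_m v$, multiplying by $x^{-m-1}$, reindexing $m\mapsto m+i$, and using the identity $(-1)^i\binom{m+i}{i}=\binom{-m-1}{i}$, the inner sum collapses into $\partial_x^{(i)}Y(u,x)$, yielding
\[
\D^{(n)}Y(u,x)v = \sum_{i=0}^n \partial_x^{(i)}Y(u,x)\,\D^{(n-i)}v.
\]
Summing this over $n\ge 0$ with weight $x_0^n$ and regrouping by the new index $n-i$ gives $e^{x_0\D}Y(u,x)v = e^{x_0\partial_x}Y(u,x)\cdot e^{x_0\D}v$, equivalently $e^{x_0\D}Y(u,x)e^{-x_0\D} = e^{x_0\partial_x}Y(u,x)$. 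The identification with $Y(u,x+x_0)$ is then the formal Taylor formula already recorded in the excerpt.

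To complete the chain of equalities, I show $Y(e^{x_0\D}u, x) = Y(u, x+x_0)$ by applying skew symmetry twice: $Y(e^{x_0\D}u, x)v = e^{x\D}Y(v,-x)e^{x_0\D}u$; the conjugation identity just proved (in the rearranged form $Y(v,-x)e^{x_0\D} = e^{x_0\D}Y(v,-x-x_0)$) turns this into $e^{(x+x_0)\D}Y(v,-x-x_0)u$, which by a second application of skew symmetry equals $Y(u,x+x_0)v$. For the final particular case, I extract the coefficient of $x_0^1$ on both sides of $e^{x_0\D}Y(u,x)e^{-x_0\D} = Y(u,x+x_0)$ and of $Y(e^{x_0\D}u,x) = Y(u,x+x_0)$; since $e^{\pm x_0\D} = 1 \pm x_0\D^{(1)} + O(x_0^2)$ and $\partial_x^{(1)} = \frac{d}{dx}$, this delivers $[\D^{(1)},Y(u,x)] = Y(\D^{(1)}u,x) = \frac{d}{dx}Y(u,x)$ at once.
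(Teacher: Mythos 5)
Your proof is correct and follows exactly the route the paper indicates (it gives no written proof, saying only that the lemma follows "using Lemma \ref{rm:defVA}~(5) and Definition \ref{def:B86VA}~(iv)"): skew symmetry from axiom (iv), the vacuum identity from axiom (iii) with $\D^{(n)}=0$ for $n<0$, and the conjugation formula by generating-function summation of Lemma \ref{rm:defVA}~(5), which correctly avoids the characteristic-$p$ pitfall of trying to integrate the $\D^{(1)}$-bracket relation. The remaining steps (two applications of skew symmetry and extraction of the $x_0$-coefficient) are all sound.
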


As in the case of characteristic zero, we have (see \cite{LL}, pages 90-91):

\begin{proposition}\label{def:VA}
A vertex algebra can be defined equivalently as a vector space $V$ equipped with a linear map
\begin{equation}
\begin{split}
    Y(\cdot,x):V&\to(\End V)[[x,x^{-1}]]\\
    v&\mapsto Y(v,x)=\sum_{n\in\BZ}v_nx^{-n-1},
\end{split}
\end{equation}
satisfying the following conditions for $u,v\in V$:
\begin{equation}
    u_nv=0\quad \text{for $n$ sufficiently large}
\end{equation}
(the {\em truncation condition}),
\begin{equation}
    Y(\1,x)=1 \quad\text{(the identity operator on $V$)}
\end{equation}
(the {\em vacuum property}),
\begin{equation}\label{ecreation-property}
    Y(v,x)\1\in V[[x]]\quad\text{and}\quad \lim_{x\to0}Y(v,x)\1=v
\end{equation}
(the {\em creation property}),
and
\begin{eqnarray}
    &&x_0^{-1}\del{x_1-x_2}{x_0}Y(u,x_1)Y(v,x_2)-x_0^{-1}\del{x_2-x_1}{-x_0}Y(v,x_2)Y(u,x_1)\nonumber\\
    &&\hspace{3cm}=x_2^{-1}\del{x_1-x_0}{x_2}Y(Y(u,x_0)v,x_2)
\end{eqnarray}
(the {\em Jacobi identity}).
\end{proposition}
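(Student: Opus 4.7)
The plan is to establish the equivalence in both directions, following the familiar route from characteristic zero (see \cite{LL}) while checking that no step requires dividing by integers.

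For the forward direction, assume the Borcherds data of Definition \ref{def:B86VA} and set $Y(v,x)=\sum_{n\in\BZ}v_n x^{-n-1}$. Truncation is axiom (i), and axiom (ii) gives $Y(\1,x)=1$. Using Lemma \ref{rm:defVA}(2),(3) together with axiom (iii), one has $u_n\1=\D^{(-n-1)}u=0$ for $n\ge 0$ and $u_{-1}\1=\D^{(0)}u=u$, which is the creation property (\ref{ecreation-property}). The only serious point is to derive the Jacobi identity from axiom (v). I would multiply (v) by $x_0^{-m-1}x_1^{-n-1}x_2^{-k-1}$ and sum over $m,n,k\in\BZ$, then use the binomial expansion (\ref{ebinom-expansion}) of the formal delta functions $x_0^{-1}\del{x_1-x_2}{x_0}$ and $x_2^{-1}\del{x_1-x_0}{x_2}$, together with the reordering identity (\ref{eq:partial02}), to match up the three generating series with the three terms of the Jacobi identity.

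For the reverse direction, given $(V,Y,\1)$ satisfying truncation, vacuum, creation, and Jacobi, I would define $\D^{(n)}:V\to V$ for $n\ge 0$ by $\D^{(n)}v=v_{-n-1}\1$, and set $\D^{(n)}=0$ for $n<0$. The creation property then gives $\D^{(0)}=1_V$, so axioms (i)--(iii) of Definition \ref{def:B86VA} hold by construction. Axiom (v) is recovered by extracting the coefficient of $x_0^{-m-1}x_1^{-n-1}x_2^{-k-1}$ from the Jacobi identity, expanding $(x_1-x_2)^m$ and $(-x_2+x_1)^m$ via (\ref{ebinom-expansion}); the terms containing $x_1-x_0$ on the right-hand side reassemble into the iterate coefficients $u_{m-i}v$, producing exactly the five-term identity. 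Axiom (iv) is deduced from the Jacobi identity by taking $\Res_{x_0}$, setting the third entry equal to $\1$, and using the creation property to replace $Y(v,x_2)\1$ by $e^{x_2\D}v$; this reproduces the standard derivation of the skew-symmetry relation $Y(u,x)v=e^{x\D}Y(v,-x)u$, whose coefficient form is (iv).

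The main obstacle is the bookkeeping in converting between (v) and the Jacobi identity: one must carefully match the sign patterns $(-1)^{i+n+1}$ and $(-1)^i\binom{m}{i}$, $(-1)^m$ with the opposite-order expansions of the two formal delta functions. However, every manipulation involves only the integer binomial coefficients of (\ref{ebinom-expansion}) (read in the prime subfield of $\BF$) and the Hasse-type operators $\D^{(n)}$; no division by $n!$ ever appears, so the characteristic-zero derivation of \cite{LL} transfers verbatim, and Lucas' theorem is not needed at this stage.
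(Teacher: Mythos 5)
Your overall strategy --- observing that the characteristic-zero equivalence proof of \cite{LL} uses only the integer binomial coefficients of (\ref{ebinom-expansion}) and the Hasse-type operators $\D^{(n)}$, never a division by $n!$ --- is exactly the point, and indeed the paper itself offers nothing beyond a citation to \cite{LL}, pages 90--91. The reverse direction of your sketch is correct: defining $\D^{(n)}v=v_{-n-1}\1$, deriving skew-symmetry (hence axiom (iv)) by the standard $\Res_{x_0}$ argument with third entry $\1$, and extracting axiom (v) as the component form of the $\Res_{x_1}$ (iterate) consequence of the Jacobi identity all transfer to arbitrary characteristic.

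The forward direction, however, has a genuine gap. Axiom (v) carries only \emph{two} free integer indices $m$ and $n$, so ``multiplying by $x_0^{-m-1}x_1^{-n-1}x_2^{-k-1}$ and summing over $m,n,k\in\BZ$'' is not a meaningful operation: the generating function of (v) over its two indices is a two-variable identity, namely the iterate formula expressing $Y(Y(u,x_0)v,x_2)w$ as a normally ordered sum --- equivalently, the $\Res_{x_1}$-consequence of the Jacobi identity. That identity is strictly weaker than the Jacobi identity, and no coefficient bookkeeping applied to (v) alone can produce a three-variable identity from a two-variable one. The missing ingredients are axiom (iv) and the general equivalence theorems: one must first verify that $\{\D^{(n)}\}$ satisfies $\D^{(m)}\D^{(n)}=\binom{m+n}{n}\D^{(m+n)}$ so that (iv) is the component form of skew-symmetry $Y(u,x)v=e^{x\D}Y(v,-x)u$, show that (v) yields weak associativity, and then invoke the implication ``weak associativity $+$ skew symmetry $\Rightarrow$ Jacobi identity'' recorded as item (ii) of the Remark immediately following this proposition. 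With that route substituted for your generating-function step, the rest of the argument is sound, and your observation that none of these manipulations requires Lucas' theorem or any division remains valid.
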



\begin{remark}
Let $V$ be a vertex algebra. Then  the following properties hold:
\begin{enumerate}[(i)]
\item {\em Weak commutativity: } For any $u,v\in V$, there exists $k\in \BN$ such that
\begin{equation}
    (x_1-x_2)^k [Y(u,x_1),Y(v,x_2)]=0.
\end{equation}

\item {\em Weak associativity:}
For $u,v,w\in V$, there exists $l\in\BN$ such that
\begin{equation}
    (x_0+x_2)^l Y(Y(u,x_0)v,x_2)w=(x_0+x_2)^l Y(u,x_0+x_2)Y(v,x_2)w.
\end{equation}
\end{enumerate}
\end{remark}

\begin{remark}
Just as in the case with $\BF=\BC$ (see \cite{FHL}, \cite{DL}, \cite{Li96}),
in the definition of a vertex algebra,
the Jacobi identity axiom can be replaced by any one of the following:
\begin{enumerate}[(i)]
\item weak commutativity and weak associativity;
\item weak associativity and skew symmetry;
\item weak commutativity and conjugation formula.
\end{enumerate}
\end{remark}

The notion of module for a vertex algebra $V$ is defined as usual.

\begin{definition}
A {\em $V$-module} is a vector space $W$ equipped with a linear map 
\begin{equation}
\begin{split}
    Y_W(\cdot,x):V&\to(\End W)[[x,x^{-1}]]\\
    v&\mapsto Y_W(v,x)=\sum_{n\in\BZ}v_nx^{-n-1},
\end{split}
\end{equation}
such that for $u,v\in V$ and $w\in W$,
\begin{gather}
    u_nw=0\quad\text{for $n$ sufficiently large},\\
    Y_W(\1,x)=1_{W}\ \  (\mbox{the identity operator on } W),\\
\begin{split}
    &x_0^{-1}\del{x_1-x_2}{x_0}Y_W(u,x_1)Y_W(v,x_2)-x_0^{-1}\del{x_2-x_1}{-x_0}Y_W(v,x_2)Y_W(u,x_1)\\
    &\hspace{3cm}=x_2^{-1}\del{x_1-x_0}{x_2}Y_W(Y(u,x_0)v,x_2).
\end{split}
\end{gather}
\end{definition}

\begin{remark}
In the definition of a $V$-module, the Jacobi identity axiom can be replaced by weak commutativity and weak associativity.
On the other hand, the Jacobi identity axiom can also be equivalently replaced by only weak associativity,
as it was mentioned in \cite{DR1}.
\end{remark}

\begin{definition}
Let $V$ be a vertex algebra. A {\em $(V,\B)$-module} is a $V$-module $(W,Y_W)$ equipped with
a $\B$-module structure such that
\begin{equation*}
    e^{x\D}Y_W(v,z)e^{-x\D}=Y_W(e^{x\D}v,z)\  \  \   \mbox{ for }v\in V.
\end{equation*}
\end{definition}

\begin{remark}\label{th:VB-module}
Let $V$ be a vertex algebra and let $(W,Y_W)$ be a $V$-module.
The same arguments in the case of characteristic zero (cf. \cite{LL}) show that
    \begin{equation}
        Y_W(e^{z_0\D}v,z)=e^{z_0\partial_z}Y_W(v,z)=Y_W(v,z+z_0)\  \   \   \mbox{ for }v\in V.
    \end{equation}
Furthermore, if $(W,Y_W)$ is a $(V,\B)$-module with $V$ a vertex algebra, then we have
\begin{equation}
    e^{x\D}Y_W(v,z)e^{-x\D}=Y_W(e^{x\D}v,z)=e^{x\partial_z}Y_W(v,z)=Y_W(v,z+x)
\end{equation}
for $v\in V$.
\end{remark}

We shall need an analog  of a result of \cite{Li94} (cf. \cite{LL}).

\begin{lemma}\label{vacuum-like}
Let $V$ be a vertex algebra, let $(W,Y_W)$ be a $(V,\B)$-module, and let $w\in W$ be such that
$\D^{(n)}w=0$ for $n\ge 1$.
Then
\begin{equation}\label{eq:WYd-vacuumlike-01}
    Y_W(v,x)w=e^{x\D}v_{-1}w\   \   \   \mbox{ for all }v\in V.
\end{equation}
In particular, $w$ is a vacuum-like vector in the sense that
$v_n w=0$ for all $v\in V,\ n\in \BN$.
\end{lemma}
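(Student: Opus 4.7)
The plan is to extract an operator identity from the $(V,\B)$-module compatibility $e^{x\D}Y_W(v,z)e^{-x\D} = Y_W(v,z+x)$ of Remark \ref{th:VB-module}, apply it to the vector $w$, and then reconstruct $Y_W(v,x)w$ from its coefficients. Since $\D^{(0)}w = w$ by Lemma \ref{rm:defVA}(3) and $\D^{(n)}w = 0$ for $n\ge 1$ by hypothesis, one has $e^{z_0\D}w = w$. Rewriting the compatibility as $e^{x\D}Y_W(v,z) = Y_W(v,z+x)e^{x\D}$ and comparing coefficients of $x^k z^{-m-1}$ on each side yields the operator identity
\begin{equation*}
\D^{(k)}v_m = \sum_{\substack{i,j\ge 0\\ j+i=k}}\binom{j-m-1}{j}v_{m-j}\D^{(i)}.
\end{equation*}
Evaluating on $w$ collapses the right-hand side to its unique surviving term $i=0$, $j=k$, producing the master recursion
\begin{equation*}
\D^{(k)}(v_m w) = \binom{k-m-1}{k}v_{m-k}w \quad \text{for all } m\in\BZ,\ k\in\BN.
\end{equation*}

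The main step, and the principal obstacle, is to deduce $v_m w = 0$ for $m \ge 0$. Shifting $m\mapsto m+k$ in the master recursion and using $\binom{-m-1}{k} = (-1)^k\binom{m+k}{k}$ gives $\D^{(k)}(v_{m+k}w) = (-1)^k\binom{m+k}{k}v_m w$. For each fixed $m\ge 0$, I fix $N$ with $v_n w = 0$ for $n\ge N$ (by truncation) and then choose $k = p^a$ for $a$ large enough that $p^a > m$ and $p^a > N - m$. By Lucas' theorem one obtains $\binom{m+p^a}{p^a}\equiv 1 \pmod{p}$, because $m < p^a$ prevents any carry in the base-$p$ addition $m + p^a$, so the digits of $p^a$ fit within those of $m+p^a$. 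The left-hand side $\D^{(p^a)}(v_{m+p^a}w)$ vanishes by the choice of $a$, and invertibility of the binomial coefficient in $\BF$ then forces $v_m w = 0$. This Lucas-based selection of $k$ is the essential new ingredient compared with the characteristic-zero argument, where any sufficiently large $k$ would work automatically.

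Once $v_m w = 0$ for $m\ge 0$ is in hand, specializing the master recursion to $m = -1$ gives $\D^{(k)}v_{-1}w = v_{-k-1}w$ for every $k\ge 0$. Therefore
\begin{equation*}
Y_W(v,x)w = \sum_{k\ge 0}v_{-k-1}w\,x^k = \sum_{k\ge 0}x^k \D^{(k)}v_{-1}w = e^{x\D}v_{-1}w,
\end{equation*}
which is the desired identity \eqref{eq:WYd-vacuumlike-01}. The vacuum-like property $v_n w = 0$ for $n\in\BN$ is then immediate since $e^{x\D}v_{-1}w$ involves only nonnegative powers of $x$.
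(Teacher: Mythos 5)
Your proof is correct and follows essentially the same route as the paper's: both extract the recursion $\D^{(n)}(v_{m+n}w)=(-1)^n\binom{m+n}{n}v_mw$ from the conjugation formula, kill $v_mw$ for $m\ge 0$ by finding $n$ with $\binom{m+n}{n}\ne 0$ and $v_{m+n}w=0$, and then recover $Y_W(v,x)w=e^{x\D}v_{-1}w$ from the surviving nonnegative powers. Your explicit choice $n=p^a$ with $p^a>m$ and $m+p^a$ beyond the truncation bound, justified by Lucas' theorem, is exactly the detail the paper leaves implicit in its assertion that such an $n$ exists.
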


\begin{proof} Let $v\in V$.  From Lemma~\ref{th:VB-module}, we have
\begin{equation*}
    \binom{-m-1}{n}v_m=\sum_{i=0}^n (-1)^i \D^{(n-i)}v_{m+n}\D^{(i)}
\end{equation*}
for  $m\in\BZ,\  n\in\BN$. 
Then we get
\begin{equation*}
    (-1)^n\binom{m+n}{n}v_mw=\sum_{i=0}^n(-1)^i \D^{(n-i)}v_{m+n}\D^{(i)}w=\D^{(n)}v_{m+n}w.
\end{equation*}
Let $m\in \BN$ be arbitrarily fixed. There exists $n\in\BN$ such that $\binom{m+n}{n}\ne0$ and $v_{m+n}w=0$.
It follows that $v_mw=0$. This proves that $w$ is a vacuum-like vector.
Furthermore, since $\D^{(n)}w=0$ for $n\ge 1$,
we have
\begin{equation*}
    e^{x\D}Y_W(v,x_0)w=e^{x\D}Y_W(v,x_0)e^{-x\D}w=Y_W(v,x_0+x)w.
\end{equation*}
Noticing that $Y_W(v,x_0)w$ involves only nonnegative integer powers of $x_0$,
we can set $x_0$ to zero to obtain \eqref{eq:WYd-vacuumlike-01}.
\end{proof}

Using Lemma \ref{vacuum-like} and the same arguments in \cite{LL} we obtain:

\begin{proposition}\label{pvacuum-like-isomorphism}
Let $(W,Y_W)$ be a $(V,\B)$-module and let $w\in W$ be such that $\D^{(n)}w=0$ for $n\ge 1$.
Then the linear map $f:V\to W$; $v\mapsto v_{-1}w$ is a $V$-module homomorphism. Furthermore, if
$W$ is a faithful $V$-module and if $w$ generates $W$ as a $V$-module, then $f$ is an isomorphism.
\end{proposition}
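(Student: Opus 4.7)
The plan is to exploit Lemma~\ref{vacuum-like}, which asserts that $Y_W(v,x)w=e^{x\D}v_{-1}w$ and that $w$ is vacuum-like. Since $Y_W(\cdot,x)w$ is thus completely determined by $f$, proving $f$ is a $V$-module homomorphism reduces to transferring an appropriate vertex algebra axiom for $Y_W$ into an identity about $f$ itself.

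First I would verify that $f$ is a $V$-module homomorphism, i.e., $f(Y(u,x_0)v)=Y_W(u,x_0)f(v)$ coefficientwise in $x_0$. The natural route is weak associativity for the module $W$ applied to $u,v\in V$ and $w$: for some $l\in\BN$,
\begin{equation*}
(x_0+x_2)^l Y_W(Y(u,x_0)v,x_2)w=(x_0+x_2)^l Y_W(u,x_0+x_2)Y_W(v,x_2)w.
\end{equation*}
Substituting $Y_W(v,x_2)w=e^{x_2\D}v_{-1}w$ together with the conjugation identity $Y_W(u,x_0+x_2)e^{x_2\D}=e^{x_2\D}Y_W(u,x_0)$ from Remark~\ref{th:VB-module} turns the right hand side into $(x_0+x_2)^l e^{x_2\D}Y_W(u,x_0)v_{-1}w$. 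Applying Lemma~\ref{vacuum-like} coefficientwise to $Y(u,x_0)v$ rewrites the left hand side as $(x_0+x_2)^l e^{x_2\D}(Y(u,x_0)v)_{-1}w$. Multiplying by $e^{-x_2\D}$, setting $x_2=0$ and dividing by $x_0^l$ yields $(Y(u,x_0)v)_{-1}w=Y_W(u,x_0)v_{-1}w$, which is the desired identity.

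Surjectivity is then immediate: the image of $f$ is a $V$-submodule of $W$ containing $w=\1_{-1}w=f(\1)$, hence contains the $V$-submodule generated by $w$, which by hypothesis is all of $W$.

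The main work is injectivity. The kernel $K=\ker f$ is a $V$-submodule of $V$, i.e., an ideal. For $v\in K$ we have $v_{-1}w=0$, whence Lemma~\ref{vacuum-like} gives $Y_W(v,x)w=e^{x\D}\cdot 0=0$ and so $v_n w=0$ for every $n\in\BZ$. To upgrade this vanishing on $w$ to vanishing on all of $W$, I would use surjectivity to write an arbitrary element of $W$ as a linear combination of elements $v'_{-1}w$ with $v'\in V$ and invoke the module commutator formula
\begin{equation*}
v_n v'_{-1}w=v'_{-1}v_n w+\sum_{k\ge 0}\binom{n}{k}(v_k v')_{n-1-k}w.
\end{equation*}
The first term dies by $v_n w=0$; for each summand in the second, the ideal property of $K$ gives $v_k v'\in K$, hence $(v_k v')_m w=0$ for every $m\in\BZ$. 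Thus $Y_W(v,x)$ annihilates all of $W$ and faithfulness forces $v=0$. The main obstacle is precisely this last step: propagating the vanishing from the cyclic generator $w$ to the whole module requires both the ideal structure of $\ker f$ and the fact that vacuum-likeness delivers vanishing at every mode, not merely at mode $-1$.
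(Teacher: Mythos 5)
Your proof is essentially correct, and the homomorphism and surjectivity parts coincide with the argument the paper intends (the one from \cite{LL} that it cites): weak associativity for the triple $u,v,w$, combined with Lemma~\ref{vacuum-like} and the conjugation formula of Remark~\ref{th:VB-module}, followed by cancellation of $e^{x_2\D}$ and of $x_0^l$, is exactly the standard route, and your bookkeeping there is accurate. Your injectivity argument is genuinely different from the reference's. There, for $v\in\ker f$ one shows $Y_W(v,x)=0$ on $W$ by repeatedly using weak commutativity --- $(x_1-x_2)^kY_W(v,x_1)Y_W(u,x_2)w=(x_1-x_2)^kY_W(u,x_2)Y_W(v,x_1)w=0$, cancel $(x_1-x_2)^k$, and induct on the length of the spanning monomials $u^{(1)}_{n_1}\cdots u^{(r)}_{n_r}w$. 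You instead use the already-established surjectivity to span $W$ by the single-layer vectors $v'_{-1}w$ and kill them with one application of the commutator formula; this is shorter and avoids the induction, at the price of needing structural information about $\ker f$.

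That is also where your write-up is too quick. You assert that $K=\ker f$ ``is a $V$-submodule of $V$, i.e., an ideal,'' and then use this to conclude $v_kv'\in K$ for $v\in K$, $v'\in V$, $k\ge 0$. A $V$-submodule of the adjoint module is a priori only a \emph{left} ideal: it gives $v'_kv\in K$, not $v_kv'\in K$ (the paper itself emphasizes in Section 5 that a left ideal is an ideal if and only if it is in addition a $\B$-submodule). The claim you need is nevertheless true and easily repaired: for $a\in K$, Lemma~\ref{vacuum-like} gives $Y_W(a,x)w=e^{x\D}a_{-1}w=0$, hence $(\D^{(i)}a)_{-1}w=a_{-i-1}w=0$, so $K$ is stable under every $\D^{(i)}$; skew symmetry $v_kv'=\sum_{i\ge0}(-1)^{i+k+1}\D^{(i)}(v'_{k+i}v)$ then places $v_kv'$ in $K$. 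With that supplement your argument is complete.
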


In the case of characteristic zero, there is a general construction theorem due to
\cite{FKRW}, \cite{MP} (cf. \cite[Theorem~5.7.1]{LL}). For a field $\BF$ of prime characteristic,
by the same proof as in \cite{LL}, we have the following slight modification:

\begin{theorem}\label{th:571}
Let $V$ be a $\B$-module with a distinguished vector $\1$ such that
$\D^{(n)}{\bf 1}=0$ for $n\ge 1$ and
let $T$ be a subset of $V$ equipped with a map
\begin{equation*}
\begin{split}
    Y_0(\cdot,x):T&\to \Hom(V,V((x))),\\
    a&\mapsto Y_0(a,x)=\sum_{n\in\BZ}a_n x^{-n-1}.
\end{split}
\end{equation*}
Assume that the following conditions hold:
\begin{enumerate}[(i)]
\item For $a\in T$, $Y_0(a,x)\1\in V[[x]]$ and $\lim_{x\to 0}Y_0(a,x)\1=a$.
\item For $a,b\in T$, there exists $k\in\BN$ such that
        \begin{equation*}
            (x_1-x_2)^k[Y_0(a,x_1),Y_0(b,x_2)]=0.
        \end{equation*}
\item $V$ is linearly spanned by  $a^{(1)}_{n_1}\cdots a^{(r)}_{n_r}\1$
       for $r\ge 0,\ a^{(1)},\ldots,a^{(r)}\in T,\  n_1,\ldots,n_r\in \BZ$.
\item For $a\in T$,
        \begin{equation*}
            e^{z\D}Y_0(a,x)e^{-z\D}=e^{z\partial_x}Y_0(a,x).
        \end{equation*}
       \end{enumerate}
Then $Y_0$ can be extended uniquely to a linear map $Y:V\to \Hom(V,V((x)))$
such that $(V,Y,\1)$ is a vertex algebra.
\end{theorem}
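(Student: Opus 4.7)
The plan is to follow the Li-style construction of a vertex algebra from a local collection of generating fields (see \cite{Li96}, cf. \cite[Chapter 5]{LL}), verifying that each step carries over to prime characteristic once the ordinary derivative is replaced by the $\B$-action through Hasse derivatives. First I would introduce the arena $\E(V) = \{a(x) \in (\End V)[[x,x^{-1}]] : a(x)v \in V((x))\text{ for all }v\in V\}$, equip it with the $\B$-action $\D^{(k)}a(x) := \partial_x^{(k)}a(x)$, and, for mutually local $a(x),b(x) \in \E(V)$ and $n \in \BZ$, define the $n$-th product by
\begin{equation*}
    a(x)_n b(x) = \Res_{x_1}\!\left((x_1-x)^n a(x_1)b(x) - (-x+x_1)^n b(x)a(x_1)\right).
\end{equation*}
Dong's lemma, whose standard proof is combinatorial and characteristic-free, then guarantees that the span of any pairwise local subset of $\E(V)$ is closed under all these $n$-th products and remains pairwise local.

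Next I would let $F \subset \E(V)$ be the smallest subspace containing $1_V$ and all $Y_0(a,x)$ for $a \in T$, closed under $(a(x),b(x)) \mapsto a(x)_n b(x)$ for $n \in \BZ$. By hypothesis (ii) together with Dong's lemma, the elements of $F$ are pairwise local. A direct computation, identical to the characteristic zero one, shows that $F$, equipped with
\begin{equation*}
    Y_{\E}(a(x),x_0)b(x) = \sum_{n \in \BZ} \bigl(a(x)_n b(x)\bigr) x_0^{-n-1},
\end{equation*}
vacuum $1_V$, and the $\B$-action above, satisfies the creation property, weak commutativity, and the conjugation formula; by the equivalence of axioms noted earlier these assemble into a genuine vertex algebra structure on $F$. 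Moreover $V$ becomes a canonical $(F,\B)$-module via $Y_V(a(x),x_0) := a(x_0)$, and hypothesis (iv) ensures that this $\B$-module compatibility holds on the generating fields $Y_0(a,x)$, hence on all of $F$ since the $n$-th product preserves the conjugation relation.

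The final and most delicate step is to transfer the vertex algebra structure from $F$ to $V$. Since $\D^{(n)}\1 = 0$ for $n \geq 1$, Proposition~\ref{pvacuum-like-isomorphism} applies to produce an $F$-module homomorphism
\begin{equation*}
    \pi : F \to V, \qquad a(x) \mapsto a(x)_{-1}\1 = \lim_{x \to 0} a(x)\1.
\end{equation*}
Hypothesis (iii) forces $\pi$ to be surjective, since iterated $n$-th products of the generators in $F$ evaluate under $\pi$ to precisely the spanning monomials $a^{(1)}_{n_1}\cdots a^{(r)}_{n_r}\1$. The hard part, which I expect to be the main obstacle, is establishing injectivity of $\pi$ — equivalently, that $V$ is a faithful $F$-module and that $\1$ generates $V$ over $F$ — so that Proposition~\ref{pvacuum-like-isomorphism} yields a bijection. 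Faithfulness reduces, via locality and the creation property, to the standard argument that a field $a(x) \in F$ is determined by $a(x)\1$, since one can compute $a(x)w$ for $w = b^{(1)}_{n_1}\cdots b^{(r)}_{n_r}\1$ by commuting $a(x)$ past the $b^{(i)}(x_i)$ at the cost of controlled poles and then evaluating at $\1$. Once $\pi$ is a linear isomorphism, define $Y(v,x) := \pi^{-1}(v)$; by construction $Y$ extends $Y_0$ on $T$, and transporting the vertex algebra axioms on $F$ along $\pi$ equips $V$ with the required structure. Uniqueness follows from (iii) because the axiom $Y(a_n v, x) = Y(a,x)_n Y(v,x)$ together with $Y|_T = Y_0$ determines $Y$ recursively on every spanning monomial.
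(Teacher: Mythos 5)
Your proposal is correct and follows essentially the same route the paper indicates: the paper's proof (sketched in the remark after the theorem) likewise invokes the characteristic-free local-fields construction of \cite{Li96} — every pairwise local subset of $\Hom(V,V((x)))$ generates a vertex algebra $\langle U\rangle$ for which $V$ is a faithful module — combined with a variation of Proposition~\ref{pvacuum-like-isomorphism} applied to $\1$, with hypothesis (iv) supplying the conjugation formula that in prime characteristic must replace the weaker $D$-bracket-derivative identity. The steps you flag as delicate (injectivity of $\pi$ via faithfulness and generation by $\1$) are exactly the ones the paper delegates to these two cited results.
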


\begin{remark}  We here comment on Theorem \ref{th:571} and its proof.
Note that in the case of characteristic zero, in the place of (iv) is the D-bracket-derivative formula:
$[\D,Y_0(a,x)]=\frac{d}{dx}Y_0(a,x)$ for $a\in T$. One proof  (see \cite{LL} for example)
uses a conceptual result of \cite{Li96} and a variation of  Proposition \ref{pvacuum-like-isomorphism}.
 The conceptual result is that for any vector space $W$,
every local subset $U$ of $\Hom (W,W((x)))$ generates a vertex algebra $\langle U\rangle$ and
$W$ is a faithful module for $\langle U\rangle$ with $Y_{W}(\alpha(x),x_0)=\alpha(x_0)$
for $\alpha(x)\in \langle U\rangle$. It is straightforward to check that this theorem holds for any field.
As with Proposition \ref{pvacuum-like-isomorphism} (and Lemma \ref{vacuum-like}), note that
 if ${\rm char}\ \BF=0$, one has $\D^{(n)}=\frac{1}{n!}\D^{n}$ for $n\ge 0$ with $\D=\D^{(1)}$
(cf. \cite{LL}). In this case,  the D-bracket-derivative formula is equivalent to the conjugation formula.
In case ${\rm char}\ \BF>0$, we need the conjugation formula, which is stronger
than the D-bracket-derivative formula.
\end{remark}

We end up this section with the following lemma which follows from the same argument
as in the case of characteristic zero (see \cite{Li96}, \cite{LL}):

\begin{lemma}
Let $V$ be a vertex algebra, let $u,v,w^{(0)},\ldots,w^{(k)}\in V$,
and let $(W,Y_W)$ be a faithful $V$-module. Then
\begin{equation}\label{eq:567-1}
    [Y(u,x_1),Y(v,x_2)]=\sum_{i=0}^k Y(w^{(i)},x_2)\partial_{x_2}^{(i)}x_1^{-1}\del{x_2}{x_1}
\end{equation}
on $V$ if and only if the analogous relation holds on $W$:
\begin{equation}\label{eq:567-2}
    [Y_W(u,x_1),Y_W(v,x_2)]=\sum_{i=0}^k Y_W(w^{(i)},x_2)\partial_{x_2}^{(i)}x_1^{-1}\del{x_2}{x_1}.
\end{equation}
In this case, we have
\begin{eqnarray}\label{eq:567-3}
u_iv=w^{(i)}\quad\text{for }0\le i\le k \  \mbox{ and }\  u_iv=0\quad\text{for }i>k.
\end{eqnarray}
On the other hand,
\eqref{eq:567-1} implies \eqref{eq:567-2} and \eqref{eq:567-3} regardless whether $W$ is faithful or not.
\end{lemma}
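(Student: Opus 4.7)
The plan is to reduce both directions to a general commutator formula together with a clean residue-extraction identity, and then transfer information between $V$ and $W$ at the level of the coefficients $u_i v$. The first ingredient is obtained by taking $\Res_{x_0}$ of the Jacobi identity: the two delta-function terms on the left collapse to $[Y(u,x_1),Y(v,x_2)]$ (each has residue $1$), and expanding $Y(Y(u,x_0)v,x_2)=\sum_n x_0^{-n-1}Y(u_n v,x_2)$ on the right yields
\begin{equation*}
[Y(u,x_1),Y(v,x_2)] = \sum_{i\ge 0} Y(u_i v, x_2)\,\partial_{x_2}^{(i)} x_1^{-1}\del{x_2}{x_1},
\end{equation*}
a finite sum by the truncation axiom. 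The identical calculation on $W$ produces the analogous identity for $Y_W$.

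The second ingredient is the extraction identity
\begin{equation*}
\Res_{x_1}\,(x_1-x_2)^j\,\partial_{x_2}^{(i)} x_1^{-1}\del{x_2}{x_1} = \delta_{i,j}\qquad(i,j\ge 0),
\end{equation*}
which after direct expansion reduces to the binomial identity $\sum_{a\ge 0}\binom{j}{a}(-1)^a\binom{j-a}{i}=\delta_{i,j}$; this follows from the Vandermonde-type rewriting $\binom{j}{a}\binom{j-a}{i}=\binom{j}{i}\binom{j-i}{a}$ together with $\sum_a\binom{j-i}{a}(-1)^a=\delta_{i,j}$, both integer identities that descend to~$\BF$.

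For the forward direction, I equate (\ref{eq:567-1}) with the general commutator formula on $V$ and apply $\Res_{x_1}(x_1-x_2)^j$: the extraction identity isolates $Y(u_j v,x_2)=Y(w^{(j)},x_2)$ for $0\le j\le k$ and $Y(u_j v,x_2)=0$ for $j>k$. Applying each side to $\1$ and setting $x_2=0$, the creation property upgrades these to the coefficient equalities (\ref{eq:567-3}). Plugging (\ref{eq:567-3}) into the general commutator formula on $W$ from Step~1 immediately produces (\ref{eq:567-2}); no faithfulness is used here, which handles the final ``regardless whether $W$ is faithful'' assertion.

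For the converse, the same extraction applied to (\ref{eq:567-2}) and the commutator formula on $W$ yields $Y_W(u_j v,x_2)=Y_W(w^{(j)},x_2)$ for $j\le k$ and $Y_W(u_j v,x_2)=0$ for $j>k$. Faithfulness of $W$ means precisely that $v\mapsto Y_W(v,x_2)$ is injective, so (\ref{eq:567-3}) follows, and (\ref{eq:567-1}) is recovered by substituting (\ref{eq:567-3}) into the general commutator formula on $V$. The only step where prime characteristic makes a genuine difference is the extraction identity: in characteristic zero one would peel off coefficients using $\tfrac{1}{n!}\partial^n$, whereas here we must verify the Vandermonde manipulations directly over $\BZ$; once that is in hand, everything else is formal delta-function bookkeeping identical to the characteristic-zero case.
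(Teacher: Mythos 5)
Your proof is correct and follows essentially the same route as the paper's (the paper gives no written proof, deferring to the characteristic-zero argument of \cite{Li96}, \cite{LL}, which is exactly this: derive the general commutator formula by taking $\Res_{x_0}$ of the Jacobi identity, then extract the individual coefficients $Y(u_jv,x_2)$, resp.\ $Y_W(u_jv,x_2)$, via $\Res_{x_1}(x_1-x_2)^j$, and use the creation property on $V$ or faithfulness on $W$ to pass to \eqref{eq:567-3}). Your observation that the only characteristic-$p$ adjustment is replacing $\tfrac{1}{i!}\partial_{x_2}^i$ by the Hasse derivative $\partial_{x_2}^{(i)}$ and checking the binomial identities over $\BZ$ is exactly the right point, and your verification of the extraction identity is sound.
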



\section{Vertex algebras associated to general affine Lie algebras}
In this section, we study vertex algebras associated to general affine
Lie algebras and then we present some basic results.
Here and after, we assume $\BF$ has prime characteristic $p$.

Let $H$ be a bialgebra.
Recall that a Lie algebra $\fg$ is called an {\em $H$-module Lie algebra} if
$\fg$ is an $H$-module such that
\begin{equation*}
    h\cdot[a,b]=\sum [h_{(1)}\cdot a,h_{(2)}\cdot b]
\end{equation*}
for $h\in H,\ a,b\in \fg$, where $\Delta(h)=\sum h_{(1)}\otimes h_{(2)}$.
A unital algebra $A$ is called an {\em $H$-module algebra} if
$A$ is an $H$-module such that
\begin{equation*}
    h\cdot (ab)=\sum (h_{(1)}\cdot a) (h_{(2)}\cdot b),\  \  \quad h\cdot 1=\varepsilon(h)1
\end{equation*}
for $h\in H,\ a,b\in A$.

\begin{remark}\label{rLaurent-poly}
 We have seen (see (\ref{eq:partialm})) that
 the Laurent polynomial ring $\BF[t,t^{-1}]$ is a $\B$-module with $\D^{(n)}$ acting as
$\partial_{t}^{(n)}$ for $n\in \BN$, where
$$\partial_{t}^{(n)}t^{m}=\binom{m}{n}t^{m-n}\  \   \   \mbox{ for }m\in \BZ.$$
 Furthermore, it is straightforward to show that $\BF[t,t^{-1}]$ is a $\B$-module algebra.
 \end{remark}

Let $H$ be a bialgebra and let $A$ be an $H$-module algebra.
An  {\em $(A,H)$-module} is an $A$-module $W$ which is also an $H$-module such that
$$h(aw)=\sum (h_{(1)}a)(h_{(2)}w)\   \   \   \mbox{ for }h\in H,\ a\in A,\ w\in W.$$
It is clear that the left adjoint $A$-module is automatically an $(A,H)$-module.

The following is a well known fact:

\begin{lemma}\label{th:ModuleAlg}
Let $\fg$ be an $H$-module Lie algebra.
Then the universal enveloping algebra $U(\fg)$ is naturally an $H$-module algebra.
Moreover, in case $H=\B$,   for $u\in U(\fg)$ we have
\begin{equation}
  e^{x\D}ue^{-x\D}=\left(e^{x\D}u\right)
\end{equation}
on any $(U(\fg),\B)$-module, in particular on $U(\fg)$.
\end{lemma}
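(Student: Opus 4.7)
The proof splits into the two assertions. For the first, I would extend the $H$-action from $\fg$ to the tensor algebra $T(\fg)$ by
$$h\cdot(v_1\otimes\cdots\otimes v_n)=\sum h_{(1)}v_1\otimes\cdots\otimes h_{(n)}v_n,$$
where $\sum h_{(1)}\otimes\cdots\otimes h_{(n)}$ denotes the iterated coproduct of $h$. Coassociativity and the counit axiom ensure that $T(\fg)$ becomes an $H$-module algebra. The key step is to verify that the two-sided ideal $I$ of $T(\fg)$ generated by $\{a\otimes b-b\otimes a-[a,b]\mid a,b\in\fg\}$ is $H$-stable. Acting by $h\in H$ on a generator and invoking the $H$-module Lie algebra axiom gives
$$h\cdot(a\otimes b-b\otimes a-[a,b])=\sum\bigl(h_{(1)}a\otimes h_{(2)}b-h_{(1)}b\otimes h_{(2)}a-[h_{(1)}a,h_{(2)}b]\bigr),$$
which, when $H$ is cocommutative (as $\B$ is by \eqref{ecoproduct-D}), can be rewritten as a sum of the defining generators $h_{(1)}a\otimes h_{(2)}b-h_{(2)}b\otimes h_{(1)}a-[h_{(1)}a,h_{(2)}b]$ and so lies in $I$. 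Passing to the quotient $U(\fg)=T(\fg)/I$ then endows the universal enveloping algebra with a natural $H$-module algebra structure.

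For the second assertion, take $H=\B$ and let $W$ be any $(U(\fg),\B)$-module. The group-like identity $\Delta(e^{x\D})=e^{x\D}\otimes e^{x\D}$ from \eqref{ecoproduct} (which holds coefficient-by-coefficient in $\B[[x]]\otimes\B[[x]]$), combined with the $(U(\fg),\B)$-module compatibility axiom, yields
$$e^{x\D}(uw)=(e^{x\D}u)(e^{x\D}w)$$
for all $u\in U(\fg)$ and $w\in W$. Replacing $w$ by $e^{-x\D}w$ and using $e^{x\D}e^{-x\D}=1$, one obtains
$$e^{x\D}\,u\,e^{-x\D}\,w=(e^{x\D}u)\,w$$
for every $w\in W$, which is exactly the asserted operator identity. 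Specializing to the left adjoint $U(\fg)$-module (which by the first part of the lemma is a $(U(\fg),\B)$-module) yields the formula on $U(\fg)$ itself.

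The main obstacle is the first assertion: confirming $H$-stability of $I$, where the cocommutativity of $\B$ is essential, since without it the commutator relations in $T(\fg)$ would fail to close under the $H$-action. Once $U(\fg)$ is known to be an $H$-module algebra, the second assertion becomes an essentially formal consequence of the group-likeness of $e^{x\D}$ and the definition of a $(U(\fg),\B)$-module.
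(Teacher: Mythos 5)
Your proof is correct and follows essentially the same route as the paper: extend the $H$-action to $T(\fg)$ via the iterated coproduct, check that the ideal of commutator relations is $H$-stable, pass to the quotient, and then deduce the conjugation formula from the group-likeness of $e^{x\D}$ by substituting $e^{-x\D}w$ for $w$. Your explicit appeal to cocommutativity is a genuine refinement, since the paper's assertion that $h([a,b]-a\otimes b+b\otimes a)$ lands back in the span of the generators silently reindexes the sum $\sum h_{(1)}b\otimes h_{(2)}a$ as $\sum h_{(2)}b\otimes h_{(1)}a$ (valid for $\B$ but not for an arbitrary bialgebra); the only step you elide is that stability of the full ideal $I$, as opposed to its generating set, still uses the module-algebra identity $h(XuY)=\sum(h_{(11)}X)(h_{(12)}u)(h_{(2)}Y)$, which your setup supplies but which is worth stating.
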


\begin{proof} First of all, the tensor algebra $T(\fg)$ is naturally an $H$-module algebra,
where
\begin{equation}\label{eq:module-alg-01}
    h(u\otimes v)=\sum h_{(1)}u \otimes h_{(2)}v
\end{equation}
for $h\in H,\  u,v\in T(\fg)$.  Set
$$U={\rm span}\{[a,b]-a\otimes b+b\otimes a  \mid a,b\in \fg\}\in T(\fg).$$
Then let $I$ be the ideal of $T(\fg)$ generated by $U$.
For $h\in H,\ a,b\in \fg$, we have
\begin{eqnarray*}
h([a,b]-a\otimes b+b\otimes a)=\sum \left([h_{(1)}a,h_{(2)}b]-h_{(1)}a\otimes h_{(2)}b+h_{(1)}b\otimes h_{(2)}a\right)\in U.
\end{eqnarray*}
Thus $U$ is an $H$-submodule of $T(\fg)$. Furthermore, for $h\in H,\ X,Y\in T(\fg),\ u\in U$, we have
$$h(XuY)=\sum (h_{(11)}X)(h_{(12)}u)(h_{(2)}Y)\in I,$$
where $\Delta(h_{(1)})=\sum h_{(11)}\otimes h_{(12)}$.
This shows that $I$ is an $H$-submodule of $T(\fg)$.
It follows that $U(\fg)$, which equals $T(\fg)/I$, is an $H$-module algebra.

Assume $H=\B$ and let $W$ be a $(U(\fg),\B)$-module.  For $u\in U(\fg),\ w\in W$, from (\ref{ecoproduct})  we have
$$e^{x\D}(uw)=(e^{x\D}u)(e^{x\D}w).$$
 A slightly different version of this is
 $$e^{x\D}(ue^{-x\D}w)=(e^{x\D}u)w,$$
 which immediately gives the desired result.
%
\end{proof}

Let $\fg$ be a Lie algebra equipped with a symmetric invariant bilinear form
$\langle\cdot,\cdot\rangle$.
The affine Lie algebra $\widehat{\fg}$ associated to the pair $(\fg,\langle\cdot,\cdot\rangle)$ is the Lie algebra
with the underlying vector space
\begin{equation*}
    \widehat{\fg}=\fg\otimes\BC[t,t^{-1}]\oplus\BC \bk,
\end{equation*}
where $\bk$ is
a nonzero central element of $\widehat\fg$ and
\begin{equation*}
    [a\otimes t^m,b\otimes t^n]=[a,b]\otimes t^{m+n}+m\langle a,b\rangle\delta_{m+n,0}\bk
\end{equation*}
for $a,b\in\fg,\  m,n\in\BZ$.

We call a $\widehat\fg$-module $W$ a {\em restricted module} if for every $a\in\fg$ and $w\in W$,
$a(n)w=0$ for $n$ sufficiently large, where $a(n)$ denotes the operator on $W$ corresponding to $a\otimes t^n$.
On the other hand, if $\bk$ acts as a scalar $\ell\in \BF$ on a $\widehat\fg$-module $W$
we say $W$ is of {\em level} $\ell$.

For $a\in\fg$, form a generating function
\begin{equation}
    a(x)=\sum_{n\in\BZ}(a\otimes t^n)x^{-n-1}.
\end{equation}
Then
\begin{equation*}
    [a(x_1),b(x_2)]=[a,b](x_2)x_2^{-1}\del{x_1}{x_2}-\+{a,b}\partial_{x_1}x_2^{-1}\del{x_1}{x_2}\bk.
\end{equation*}
Using \eqref{eq:partial02}, we get
\begin{equation}\label{eq:affine-local}
    (x_1-x_2)^2[a(x_1),b(x_2)]=0.
\end{equation}

Set $$\widehat{\fg}_{+}=\coprod_{n>0}\fg\otimes t^{-n},\   \   \   \
\widehat{\fg}_{-}=\coprod_{n>0}\fg\otimes t^{n},\   \   \  \   \widehat{\fg}_{(0)}=\fg\oplus \BF\bk.$$
Let $\ell\in\BF$. Let $\widehat{\fg}_{-}$ and $\fg$ act trivially on $\BF$ and let $\bk$
act as scalar $\ell$, making $\BF$ a $\widehat{\fg}_{-}\oplus\widehat{\fg}_{(0)}$-module,
which we denote by $\BF_\ell$.
Form an induced module
\begin{equation}
    V_{\widehat{\fg}}(\ell,0)=U(\widehat{\fg})\otimes_{U(\widehat{\fg}_{-}\oplus\widehat{\fg}_{(0)})}\BF_\ell.
\end{equation}

Define an action of $\B$ on $\widehat\fg$ by
\begin{eqnarray*}
   \D^{(n)}\cdot \bk=\delta_{n,0}\bk,\    \   \   \   \   \
   \D^{(n)}\cdot (a\otimes t^m)=(-1)^n \binom{m}{n}(a\otimes t^{m-n})
   \end{eqnarray*}
 for $n\in \BN,\  a\in\fg,\ m\in\BZ.$
 In terms of generating functions, we have
\begin{eqnarray}
e^{z\D} \cdot \bk= \bk\ \left(=\varepsilon (e^{z\D})\bk\right),\  \  \   \   \  e^{z\D}(a(x))=e^{z\partial_{x}}(a(x))=a(x+z).
\end{eqnarray}
For $n\in\BN$, $a,b\in \fg,\ r,s\in \BZ$, we have
\begin{align*}\label{eq:module-lie-algebra-0}
   &\ \quad \D^{(n)}([a\otimes t^r,b\otimes t^s])\\
   &=[a,b]\otimes (-1)^{n}\binom{r+s}{n}t^{r+s-n}+r\langle a,b\rangle \delta_{r+s,0}\delta_{n,0}\bk\\
   &=\sum_{i=0}^n [a,b]\otimes (-1)^{n}\binom{r}{n-i}\binom{s}{i}t^{r+s-n}+(r-n+i)\langle a,b\rangle \delta_{r+s-n,0}\delta_{n,0}\bk\\
   &=\sum_{i=0}^n [\D^{(n-i)}(a\otimes t^r),\D^{(i)}(b\otimes t^s)].
\end{align*}
It follows that $\widehat\fg$ is a $\B$-module Lie algebra.
In view of Lemma~\ref{th:ModuleAlg}, $U(\widehat\fg)$ is a $\B$-module algebra.

Clearly $\widehat\fg_{-}+\fg+ \BF(\bk-\ell)$ is a $\B$-submodule of $U(\widehat\fg)$,
so that $U(\widehat\fg)(\widehat\fg_{-}+ \fg+ \BF(\bk-\ell))$ is a $\B$-submodule.
Since
\begin{equation*}
    V_{\widehat\fg}(\ell,0)=U(\widehat\fg)/ U(\widehat\fg)(\widehat\fg_{-}+ \fg+ \BF(\bk-\ell))
\end{equation*}
as a $U(\widehat\fg)$-module, it follows that $V_{\widehat\fg}(\ell,0)$ is a $(U(\widehat\fg),\B)$-module.
Furthermore, by Lemma~\ref{th:ModuleAlg}, we have
\begin{equation}\label{eq:affconj}
    e^{z\D}a(x)e^{-z\D}=e^{z\D}(a(x))=a(x+z)
\end{equation}
on $V_{\widehat\fg}(\ell,0)$ for $a\in \fg$.
With \eqref{eq:affine-local} and \eqref{eq:affconj}, by Theorem~\ref{th:571}
we immediately have:

\begin{proposition}\label{th:affineVA}
There exists a vertex algebra structure
on $V_{\widehat\fg}(\ell,0)$, which is uniquely determined by the condition that $\1=1\otimes 1$ is the vacuum vector and
\begin{equation}
    Y(a,x)=a(x)\in(\End V_{\widehat\fg}(\ell,0))[[x,x^{-1}]]\quad \text{for }a\in\fg.
\end{equation}
\end{proposition}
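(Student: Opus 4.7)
The plan is to invoke Theorem \ref{th:571} with $T=\fg$ and $Y_{0}(a,x)=a(x)$ for $a\in \fg$, and verify its four hypotheses on $V_{\widehat\fg}(\ell,0)$ in turn. Since a vertex algebra structure on $V_{\widehat\fg}(\ell,0)$ in which $\1=1\otimes 1$ is the vacuum and $Y(a,x)=a(x)$ for $a\in\fg$ is, by Theorem \ref{th:571}, uniquely determined once $Y_{0}$ is fixed on the generating set $\fg$, uniqueness will also come out of the same argument.

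First I would check the creation-type condition (i). By construction of the induced module, we have
\begin{equation*}
    a(x)\1 = \sum_{n\in\BZ}(a\otimes t^{n})\1\, x^{-n-1} = \sum_{n\ge 0}(a\otimes t^{-n-1})\1\, x^{n},
\end{equation*}
since $(a\otimes t^{n})\1=0$ for $n\ge 0$. Hence $a(x)\1\in V_{\widehat\fg}(\ell,0)[[x]]$ with constant term $(a\otimes t^{-1})\1$, which we identify with $a$ under the natural embedding $\fg\hookrightarrow V_{\widehat\fg}(\ell,0)$. Next, hypothesis (ii) is precisely the weak commutativity relation \eqref{eq:affine-local}, so we may take $k=2$. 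Hypothesis (iv) is exactly \eqref{eq:affconj}, already derived from the fact that $V_{\widehat\fg}(\ell,0)$ is a $(U(\widehat\fg),\B)$-module via Lemma \ref{th:ModuleAlg}.

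The remaining hypothesis (iii)—that $V_{\widehat\fg}(\ell,0)$ is linearly spanned by vectors of the form $a^{(1)}_{n_{1}}\cdots a^{(r)}_{n_{r}}\1$ with $a^{(i)}\in\fg$ and $n_{i}\in\BZ$—follows from PBW applied to $\widehat\fg$. Indeed, as a $U(\widehat\fg_{+})$-module, $V_{\widehat\fg}(\ell,0)$ is free of rank one on $\1$, so it is spanned by monomials of the form $(a^{(1)}\otimes t^{-m_{1}})\cdots (a^{(r)}\otimes t^{-m_{r}})\1$ with $m_{i}\ge 1$. Writing $a(n)$ for the coefficient of $x^{-n-1}$ in $a(x)$, the vector $a\otimes t^{-m}$ acts on $\1$ as $a_{m-1}\1$ under $Y_{0}$, so these monomials are exactly of the form in (iii).

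With all four hypotheses verified, Theorem \ref{th:571} supplies a unique extension of $Y_{0}$ to a linear map $Y:V_{\widehat\fg}(\ell,0)\to\Hom(V_{\widehat\fg}(\ell,0),V_{\widehat\fg}(\ell,0)((x)))$ making $(V_{\widehat\fg}(\ell,0),Y,\1)$ into a vertex algebra, yielding the proposition. I do not expect a serious obstacle: the only mild subtlety is making sure in step (i) that the identification $(a\otimes t^{-1})\1\leftrightarrow a$ is compatible with the choice of the map $Y_{0}$, and, in step (iii), recording that the spanning set produced by the iterated action $a^{(1)}_{n_{1}}\cdots a^{(r)}_{n_{r}}\1$ in Theorem \ref{th:571} coincides with the PBW spanning set, which is immediate since $a(-m)\1=(a\otimes t^{-m})\1$ for $m\ge 1$ and $a(n)\1=0$ for $n\ge 0$.
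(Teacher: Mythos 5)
Your proof is correct and follows exactly the route the paper takes: the paper's own argument is simply to cite weak commutativity \eqref{eq:affine-local} and the conjugation formula \eqref{eq:affconj} and then invoke Theorem~\ref{th:571}, with (i) and (iii) being the immediate consequences of the induced-module construction that you spell out. The only blemish is the indexing slip ``$a\otimes t^{-m}$ acts on $\1$ as $a_{m-1}\1$'': since $Y_0(a,x)=\sum_n (a\otimes t^n)x^{-n-1}$ one has $a\otimes t^{-m}=a_{-m}$, as you in fact state correctly in your final sentence, so this does not affect the argument.
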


Just as in the case of characteristic zero, we have (cf. \cite{LL}):

\begin{proposition}
Every $V_{\widehat\fg}(\ell,0)$-module $W$
is naturally a restricted $\widehat\fg$-module of level $\ell$
with $a(x)=Y_W(a,x)$ for $a\in\fg$.
On the other hand, on any restricted $\widehat\fg$-module $W$ of level $\ell$, there exists
a $V_{\widehat\fg}(\ell,0)$-module structure $Y_{W}(\cdot,x)$ which is uniquely determined by
$$Y_{W}(a,x)=a(x)\   \   \  \mbox{ for }a\in \fg \subset V_{\widehat\fg}(\ell,0).$$
\end{proposition}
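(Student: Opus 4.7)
The two directions are handled separately, leveraging the local-system machinery from the remark after Theorem~\ref{th:571} together with the final lemma of Section~2. For the \emph{forward direction}, given a $V_{\widehat\fg}(\ell,0)$-module $(W,Y_W)$, I would set $a(n):=a_n$ on $W$ for $a\in\fg\subset V_{\widehat\fg}(\ell,0)$, $n\in\BZ$, and let $\bk$ act as $\ell$; the truncation axiom of $Y_W$ then gives restrictedness. A short calculation inside $V_{\widehat\fg}(\ell,0)$ yields $a_0 b=[a,b]$, $a_1 b=\ell\langle a,b\rangle\1$, and $a_n b=0$ for $n\ge 2$, using the affine bracket together with $a(n)\1=0$ for $n\ge 0$. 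The final lemma of Section~2, applied to the faithful adjoint module $V_{\widehat\fg}(\ell,0)$, then lifts the commutator formula \eqref{eq:567-1} to any module $W$, and after rewriting the derivative-delta term via \eqref{eq:partial02} this is precisely the commutator of generating functions dictated by $\widehat\fg$ at level $\ell$.

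\textbf{Reverse direction.} Now let $W$ be a restricted $\widehat\fg$-module of level $\ell$. By \eqref{eq:affine-local}, the set $U=\{1_W\}\cup\{a(x):a\in\fg\}$ is a local subset of $\Hom(W,W((x)))$. Invoking the local-system theorem of Li quoted in the remark following Theorem~\ref{th:571} (valid over any field), $U$ generates a vertex algebra $\langle U\rangle$ of which $W$ is a faithful module, with vacuum $1_W$ and $Y_W(\alpha(x),x_0)=\alpha(x_0)$. Applying the converse half of the final lemma of Section~2 to the commutator $[a(x_1),b(x_2)]$ known on $W$ from the affine bracket lifts the relation inside $\langle U\rangle$ to
\[
(a(x))_0\, b(x)=[a,b](x),\quad (a(x))_1\, b(x)=\ell\langle a,b\rangle\, 1_W,\quad (a(x))_n\, b(x)=0\ (n\ge 2).
\]
A direct mode-commutator computation then makes $\langle U\rangle$ into a $\widehat\fg$-module of level $\ell$ via $a\otimes t^n\mapsto(a(x))_n$, while Definition~\ref{def:B86VA}(iii) and Lemma~\ref{rm:defVA}(1) force $(a(x))_n\, 1_W=0$ for $a\in\fg$ and $n\ge 0$. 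Thus $1_W\in\langle U\rangle$ satisfies exactly the defining vacuum relations of $V_{\widehat\fg}(\ell,0)$.

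\textbf{Conclusion and main obstacle.} By the universal (induced-module) property of $V_{\widehat\fg}(\ell,0)$, there is then a unique $\widehat\fg$-module homomorphism $\phi:V_{\widehat\fg}(\ell,0)\to\langle U\rangle$ with $\phi(\1)=1_W$; because both vertex algebra structures are generated by $\fg$ and agree on these generators, $\phi$ is automatically a vertex algebra homomorphism. Pulling the $\langle U\rangle$-action on $W$ back through $\phi$ furnishes the desired $V_{\widehat\fg}(\ell,0)$-module structure, which is unique given $Y_W(a,x)=a(x)$ for $a\in\fg$. The decisive technical point is the middle step of the reverse direction: transferring the affine commutation relations from the faithful module $W$ up to $\langle U\rangle$ and then recognizing that, together with the vacuum-annihilation of $1_W$, this matches exactly the defining data of the induced module $V_{\widehat\fg}(\ell,0)$, so that the universal property may be invoked.
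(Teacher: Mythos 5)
Your argument is correct and follows essentially the same route the paper intends: the paper gives no written proof but defers to the standard argument of \cite{LL}, which is precisely the local-system construction you describe (locality of the $a(x)$, the generated vertex algebra $\langle U\rangle$ with $W$ faithful, transfer of the commutator relations via the final lemma of Section~2, and the universal property of the induced module), together with the routine computation $a_0b=[a,b]$, $a_1b=\ell\langle a,b\rangle\1$, $a_nb=0$ for $n\ge 2$ in the forward direction. No substantive gap; only note that in the forward direction faithfulness of the adjoint module is not what is used --- one only needs the implication from \eqref{eq:567-1} on $V$ to \eqref{eq:567-2} on an arbitrary module, which the lemma supplies unconditionally.
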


\section{Representations of certain Heisenberg algebras}

For the purpose to study Heisenberg vertex algebras and their representations,
in this section we study representations of certain Heisenberg algebras and we establish a complete reducibility
theorem for a certain category of highest weight type modules.

Let $S$ be a nonempty set. To $S$, we associate a Heisenberg Lie algebra
\begin{eqnarray}
\mathcal{H}_{S}=\sum_{\alpha\in S}\left(\BF a_{\alpha}\oplus \BF b_{\alpha}\right)\oplus \BF {\bf k}
\end{eqnarray}
with a designated basis $\{ \bk\}\cup \{a_{\alpha},\  b_{\alpha} \ |\ \alpha \in S\}$, where ${\bf k}$ is central and
\begin{eqnarray}
[a_{\alpha},a_{\beta}]=[b_{\alpha},b_{\beta}]=0,\   \   \   \  [a_{\alpha},b_{\beta}]=\delta_{\alpha,\beta}{\bf k}
\   \   \   \   \mbox{ for }\alpha,\beta\in S.
\end{eqnarray}
(Recall that a Heisenberg algebra is a Lie algebra whose derived subalgebra coincides
with its center which is assumed to be $1$-dimensional.)
Set
\begin{eqnarray}
P[S]=\BF[x_{\alpha}\mid \alpha\in S],
\end{eqnarray}
the commutative polynomial algebra.
It is well known that for any $\ell\in \BF$,  $P[S]$ becomes an $\mathcal{H}_{S}$-module where
${\bf k}$ acts as scalar $\ell$ and  for $\alpha\in S$, $a_{\alpha}$ acts as the differential operator $\ell \partial_{x_{\alpha}}$ while $b_{\alpha}$ acts as the multiplication operator $x_{\alpha}$. Let $J_{\ell}$ be the left ideal of $U(\mathcal{H}_{S})$
generated by ${\bf k}-\ell$ and $a_{\alpha}$ for $\alpha\in S$.
It can be readily seen that
\begin{eqnarray}
P[S]\simeq U(\mathcal{H}_{S})/J_{\ell}
\end{eqnarray}
as an $\mathcal{H}_{S}$-module. Set
\begin{eqnarray}
A(S,\ell)=U(\mathcal{H}_{S})/({\bf k}-\ell)U(\mathcal{H}_{S}),
\end{eqnarray}
a commutative associative algebra.

Furthermore, let $f: S\rightarrow \BF$ be any function. Denote by $I_{f}$ the ideal of $P[S]$ generated by elements
$x_{\alpha}^{p}-f(\alpha)$ for $\alpha\in S$. Notice that
$$\partial_{x_{\beta}}(x_{\alpha}^{p}-f(\alpha))=\delta_{\alpha,\beta}px_{\alpha}^{p-1}=0$$
for $\alpha,\beta\in S$.
It follows that $I_{f}$ is an $\mathcal{H}_{S}$-submodule.
Then we set
\begin{eqnarray}
V[S,f]=\BF[x_{\alpha}\  |  \  \alpha\in S]/I_{f},
\end{eqnarray}
a commutative associative algebra and an $\mathcal{H}_{S}$-module of level $\ell$.
It can be readily seen that the commutative products
\begin{eqnarray}\label{ebasis-0}
x_{\alpha_1}^{k_1}x_{\alpha_2}^{k_2}\cdots x_{\alpha_r}^{k_r}
\end{eqnarray}
for $r\ge 0,\  \alpha_1,\dots, \alpha_r\in S$ (distinct), $0\le k_1,\dots, k_{r}\le p-1$ form a basis
of $V[S,f]$.

\begin{lemma}\label{heisenberg-S}
For any nonzero $\ell\in \BF$, the $\mathcal{H}_{S}$-module $V[S,f]$ of level $\ell$ is irreducible.
\end{lemma}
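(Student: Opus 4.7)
The goal is to show that any nonzero $\mathcal{H}_{S}$-submodule $W$ of $V[S,f]$ must equal the whole module; I would split this into a reduction step and a construction step. For the reduction, observe that if $1\in W$ then, because $b_\alpha$ acts on $V[S,f]$ as multiplication by $x_\alpha$, every basis element $b_{\alpha_1}^{k_1}\cdots b_{\alpha_r}^{k_r}\cdot 1=x_{\alpha_1}^{k_1}\cdots x_{\alpha_r}^{k_r}$ with $0\le k_i\le p-1$ lies in $W$; by \eqref{ebasis-0} these span $V[S,f]$, so $W=V[S,f]$. Thus it suffices to produce a nonzero scalar inside $W$.

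To produce such a scalar I would pick a nonzero $v\in W$ of minimum total degree and expand $v=\sum_L c_L x^L$ in the basis \eqref{ebasis-0}, so that every exponent $L_\beta$ lies in $\{0,\dots,p-1\}$. Supposing for contradiction that $\deg v>0$, some top-degree index $K$ has $c_K\ne 0$ and $K\ne 0$, so I can select $\alpha\in S$ with $K_\alpha\ge 1$. Then $a_\alpha v=\ell\,\partial_{x_\alpha}v$ lies in $W$, and since $K_\alpha-1$ still lies in $\{0,\dots,p-1\}$, the result is already in basis form; crucially, the coefficient of $x^{K-e_\alpha}$ in $a_\alpha v$ comes solely from the term $c_K x^K$ and equals $\ell K_\alpha c_K$. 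This is nonzero because $\ell\ne 0$, $c_K\ne 0$, and $1\le K_\alpha\le p-1$ makes $K_\alpha$ invertible in $\BF$. Hence $a_\alpha v$ is a nonzero element of $W$ whose degree is strictly smaller than $\deg v$, contradicting minimality. So $\deg v=0$ and $v$ is, up to a scalar, equal to $1$.

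I do not anticipate a real obstacle here; the two points demanding care are precisely the prime-characteristic features that make the statement true. First, the hypothesis $\ell\ne 0$ is essential, for if $\ell=0$ then $a_\alpha$ acts as zero and the degree-reduction step collapses. Second, the bound $K_\alpha\le p-1$ built into the basis \eqref{ebasis-0} both keeps $K_\alpha$ invertible in $\BF$ and keeps the differentiation inside the truncated polynomial algebra, which is what guarantees that no other basis monomial contributes to the coefficient of $x^{K-e_\alpha}$ and spoils the nonvanishing of $\ell K_\alpha c_K$.
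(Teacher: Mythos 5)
Your proof is correct, and it takes a genuinely different route from the paper's. The paper argues structurally: it first treats the rank-one case $|S|=1$, showing that any submodule of $\BF[x]/(x^p-\alpha)\BF[x]$ is stable under $x\frac{d}{dx}$ (a multiple of $ba$ when $\ell\ne 0$), whose eigenvalues $0,1,\dots,p-1$ on the monomial basis are distinct in $\BF$; it then handles finite $S$ by writing $A(S,\ell)$ and $V[S,f]$ as tensor products of the rank-one pieces and invoking irreducibility of the tensor product, and finally reduces arbitrary $S$ to finite subsets. Your argument is instead a single uniform descent on total degree: the operator $a_\alpha=\ell\partial_{x_\alpha}$ strictly lowers the degree of a minimal-degree element of a nonzero submodule unless that element is a scalar, and then the $b_\alpha$ regenerate the whole basis from $1$. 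The two prime-characteristic pitfalls you flag --- invertibility of the exponent $K_\alpha$ in $\BF$ and the absence of aliasing through the relation $x_\alpha^p=f(\alpha)$ --- are exactly the points that need care, and both are handled correctly by the restriction $0\le K_\alpha\le p-1$ built into the basis \eqref{ebasis-0}. What your approach buys is a more elementary, self-contained proof that treats infinite $S$ directly and avoids the (tacit) fact that a tensor product of absolutely irreducible finite-dimensional modules is irreducible; what the paper's approach buys is the explicit identification of $V[S,f]$ as a tensor product of $p$-dimensional rank-one modules, a structural picture that is reused later in the paper.
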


\begin{proof} First, we consider the special case with $|S|=1$. In this case, $\mathcal{H}_{S}=\BF a+\BF b+\BF {\bf k}$
and $a$ acts as $\ell\frac{d}{dx}$ and $b$ acts as the multiplication operator $x$ on
the polynomial algebra $\BF[x]$. Let $\alpha\in \BF$.  Set $V[\alpha]=\BF[x]/(x^{p}-\alpha)\BF[x]$, which is $p$-dimensional.
Notice that with $\ell\ne 0$, any submodule of $V[\alpha]$ is $(x\frac{d}{dx})$-stable and $(x\frac{d}{dx})x^{n}=nx^{n}$ for $n\in \BN$. Then it follows that $V[\alpha]$ is an irreducible module.
Furthermore, let $w$ be a nonzero vector in an $\mathcal{H}_{S}$-module such that $aw=0$ and $b^{p}w=\alpha w$.
By using $\BF[x]$ and the irreducibility of $V[\alpha]$, it is straightforward to show that $V[\alpha]\simeq U(\mathcal{H}_{S})w$.

Next, we consider the case with $S$ finite.
For $\alpha\in S$, let $A_{\alpha}$ be the subalgebra of $A(S,\ell)$ generated by
$a_{\alpha}$ and $b_{\alpha}$. Then
$$A(S,\ell)\simeq \bigotimes_{\alpha\in S}A_{\alpha}.$$
It can be readily seen that $V[S,f]=\otimes_{\alpha\in S}V[f_\alpha]$. For each $\alpha\in S$,
it was proved previously that  $V[f_\alpha]$ is a $p$-dimensional irreducible $A_\alpha$-module.
Then it follows that $V[S,f]$ is an irreducible $\mathcal{H}_{S}$-module.

Now, we consider the general case. To show that $V[S,f]$ is an irreducible $\mathcal{H}_{S}$-module, we prove that
 $U(\mathcal{H}_{S})w=V[S,f]$ for any nonzero vector $w\in V[S,f]$.
Let $0\ne w\in V[S,f]$. There exists a finite subset $S'$ of $S$ such that $w\in U(\mathcal{H}_{S'} )1$.
We have already proved that $U(\mathcal{H}_{S'} )1$ is an irreducible $\mathcal{H}_{S'}$-module, which implies
$U(\mathcal{H}_{S'} )1=U(\mathcal{H}_{S'} )w$.
Then $$1\in U(\mathcal{H}_{S'} )1=U(\mathcal{H}_{S'} )w\subset U(\mathcal{H}_{S})w.$$
It follows that $V[S,f]=U(\mathcal{H}_{S})w.$ Therefore, $V[S,f]$ is an irreducible $\mathcal{H}_{S}$-module.
\end{proof}

Let $d$ be a positive integer. Set
\begin{eqnarray}
S_{d}=\{ (i,n)\ |\  1\le i\le d,\ n\in \BZ_+\setminus p\BZ_+\}.
\end{eqnarray}

\begin{definition}
Let $\lambda: S_{d}\rightarrow \BF$ be a function.
Define $P[S_{d},\lambda]$ to be the unital commutative associative algebra generated
by $x_{i,n}$ for $(i,n)\in S_{d}$, subject to relations
$$x_{i,n}^{p}=\lambda_{i,n}^{p}\   \   \   \mbox{ for }(i,n)\in S_{d}.$$
\end{definition}

It follows that the commutative products
\begin{eqnarray}\label{ebasis}
x_{i_1,n_1}^{k_1}x_{i_2,n_2}^{k_2}\cdots x_{i_r,n_r}^{k_r}
\end{eqnarray}
for $r\ge 0,\ (i_1,n_1),\dots,(i_r,n_r)\in S_{d}$ (distinct), $0\le k_{1},\dots,k_r\le p-1$ form a basis
of $P[S_{d},\lambda]$.

Let $\fh$ be a finite-dimensional vector space over $\BF$ equipped with a
non-degenerate symmetric bilinear form $\langle\cdot,\cdot\rangle$.
View $\fh$ as an abelian Lie algebra, so that  $\langle\cdot,\cdot\rangle$ is
an invariant bilinear form. Then we have an affine Lie algebra $\widehat\fh$ associated to the pair
$(\fh,\langle\cdot,\cdot\rangle)$, where
$$[\alpha(m),\beta(n)]=m\delta_{m+n,0}\langle \alpha,\beta\rangle \bk$$
for $\alpha,\beta\in \fh,\ m,n\in \BZ$.

We have the following simple result:

\begin{lemma}\label{lcentral}
The elements $u(kp)$ and $u(k)^{p}$  for $u\in \fh,\ k\in \BZ$ lie in the center
of $U(\widehat{\fh})$.
\end{lemma}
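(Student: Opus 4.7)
The plan is to verify that the listed elements commute with all generators of $U(\widehat{\fh})$, namely with $\bk$ (trivially, since $\bk$ is central) and with every $\alpha(m)$ for $\alpha\in\fh$, $m\in\BZ$. Since the bracket relation in $\widehat{\fh}$ always lands in $\BF\bk$, which is central, every commutator $[\alpha(m),\beta(n)]$ in $U(\widehat{\fh})$ is central and in particular commutes with anything.

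For $u(kp)$, I would simply compute
\begin{equation*}
[\alpha(m),u(kp)] = m\,\delta_{m+kp,0}\,\langle\alpha,u\rangle\,\bk.
\end{equation*}
This vanishes unless $m=-kp$, in which case the coefficient is $-kp\,\langle\alpha,u\rangle$; but $kp=0$ in $\BF$ since $\mathrm{char}\,\BF=p$. Hence $u(kp)$ commutes with every $\alpha(m)$.

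For $u(k)^p$, I would first record the general identity that, in any associative algebra, if $[x,y]$ commutes with $y$, then $[x,y^n]=n\,y^{n-1}[x,y]$ for all $n\ge 1$ (a one-line induction). Applying this with $x=\alpha(m)$ and $y=u(k)$ — noting that $[\alpha(m),u(k)]\in\BF\bk$ is central, hence commutes with $u(k)$ — yields
\begin{equation*}
[\alpha(m),u(k)^p] = p\,u(k)^{p-1}[\alpha(m),u(k)] = p\cdot m\,\delta_{m+k,0}\,\langle\alpha,u\rangle\,u(k)^{p-1}\bk = 0,
\end{equation*}
again because $p=0$ in $\BF$.

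Since both families of elements commute with every $\alpha(m)$ and with $\bk$, and these elements generate $U(\widehat{\fh})$ as an algebra, both $u(kp)$ and $u(k)^p$ lie in the center. There is no real obstacle here; the only subtle point is remembering that we are working inside the associative enveloping algebra (so the nonabelian binomial-style identity $[x,y^n]=ny^{n-1}[x,y]$, valid when $[x,y]$ is central, is what makes the prime-power argument go through), and that $p\equiv 0$ in $\BF$ is used in both steps.
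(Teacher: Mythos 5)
Your proof is correct and follows essentially the same route as the paper's: a direct bracket computation for $u(kp)$ using $kp=0$ in $\BF$, and the identity $[x,y^r]=r\,y^{r-1}[x,y]$ (valid since $[\alpha(m),u(k)]\in\BF\bk$ is central), which is exactly the induction the paper carries out, followed by setting $r=p$. No gaps.
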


\begin{proof} Let $u\in \fh,\ k\in \BZ$. For any $v\in \fh,\ n\in \BZ$, we have
$$[u(kp),v(n)]=(kp)\delta_{kp+n,0}\langle u,v\rangle {\bf k}=0.$$
It follows that $u(kp)$ is central in $U(\widehat{\fh})$.
On the other hand, for any $u,v\in \fh,\ m,n\in \BZ$, by induction on $r$, we have
$$[u(m)^{r},v(n)]=rm\delta_{m+n,0}\langle u,v\rangle u(m)^{r-1}{\bf k}$$
for all $r\ge 1$, which implies
$$[u(m)^{p},v(n)]=pm\delta_{m+n,0}\langle u,v\rangle u(m)^{p-1}{\bf k}=0.$$
This shows that $u(k)^{p}$ for $u\in \fh,\ k\in \BZ$ are central in $U(\widehat{\fh})$.
\end{proof}

Recall that $\widehat{\fh}_+=\coprod_{n>0}\fh\otimes t^{-n}$ and $\widehat{\fh}_-=\coprod_{n>0}\fh\otimes t^{n}$.
For any $\widehat\fh$-module $W$, set
\begin{equation}
    \Omega_W=\{w \in W\mid \widehat\fh_- w=0\}.
\end{equation}
A nonzero element of $\Omega_W$ is called a {\em vacuum vector}. It is clear that
every central element of $U(\widehat{\fh})$ preserves $\Omega_W$. In particular, $u(kp)$ and $u(k)^{p}$ preserve
$\Omega_{W}$ for any $u\in \fh,\ k\in \BZ$.

\begin{definition}\label{def:C}
We say that a restricted $\widehat\fh$-module $W$
satisfies condition $\mathcal{C}_{0}$ if
\begin{enumerate}[(i)]
\item $u(np)$ and $u(n)^p$ act trivially on $W$ for $u\in \fh,\  n\in \BZ_+$.\label{con:3}
\item $u(-np)$ and $u(-n)^{p}$ act on $W$ semisimply for $u\in \fh,\ n\in \BN$.
\end{enumerate}
\end{definition}

\begin{lemma}\label{th:vacuum}
Let $W$ be any nonzero restricted $\widehat\fh$-module satisfying condition $\mathcal{C}_0$. Then
$\Omega_{W}\ne 0$.
\end{lemma}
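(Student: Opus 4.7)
The plan is to start from an arbitrary nonzero $w_0\in W$ and successively cut down the set of modes $h(n)$ (with $n\geq 1$) that act nontrivially on it, exploiting the fact from condition $\mathcal{C}_0$ (i) that every such mode satisfies $h(n)^p=0$ on $W$ and hence acts nilpotently. First I would fix a basis $h_1,\ldots,h_d$ of $\fh$; since $W$ is restricted and $\fh$ is finite-dimensional, there exists $N\in\BZ_+$ with $h(n)w_0=0$ for every $h\in\fh$ and every $n\geq N$. My goal would then be, by induction that lowers $N$ by one at each step, to produce a nonzero vector annihilated by all $h(n)$ with $n\geq 1$, which is exactly a vacuum vector.

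For the induction, the base case $N=1$ is the vacuum property itself. In the inductive step I would assume $N\geq 2$ and consider the nonzero subspace
\begin{equation*}
W_0=\{w\in W\mid h(n)w=0 \text{ for all } h\in\fh,\ n\geq N\}.
\end{equation*}
The commutation relations $[h(n),h_i(N-1)]=n\langle h,h_i\rangle\delta_{n+N-1,0}\bk$ and $[h_i(N-1),h_j(N-1)]=(N-1)\langle h_i,h_j\rangle\delta_{2(N-1),0}\bk$ both have their $\delta$-factors vanish whenever $N\geq 2$ and $n\geq N$, since then $n+(N-1)>0$ and $2(N-1)\neq 0$ in $\BZ$. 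Consequently each $h_i(N-1)$ preserves $W_0$, the operators $h_1(N-1),\ldots,h_d(N-1)$ pairwise commute on $W_0$, and each of them acts nilpotently by condition $\mathcal{C}_0$ (i). A standard successive-kernel argument---set $V_0=W_0$ and $V_i=\ker h_i(N-1)|_{V_{i-1}}$, with each $V_i$ preserved by the remaining commuting nilpotents---should then produce a nonzero $w_1\in V_d$ killed by every $h_i(N-1)$ and therefore, by linearity, by every $h(N-1)$. Combined with $w_1\in W_0$ this gives $h(n)w_1=0$ for all $h\in\fh$ and $n\geq N-1$, completing one step of the induction.

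Iterating drops the annihilation index from $N$ down to $1$ and yields the desired vacuum vector. The only point I would expect to need to verify carefully is the pair of commutator identities above, in particular that the relevant $\delta$-factors vanish under $N\geq 2$; the rest is routine linear algebra for pairwise commuting nilpotent endomorphisms on a nonzero space. Note that the semisimplicity clause $\mathcal{C}_0$ (ii) plays no role in this existence argument and will presumably enter only later, when one analyses the structure of $W$ in terms of its vacuum vectors.
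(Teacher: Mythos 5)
Your argument is correct and rests on exactly the two facts the paper's proof uses: the modes $h(n)$, $n\ge 1$, pairwise commute, and by condition $\mathcal{C}_0$(i) each acts nilpotently, so a nonzero common kernel vector exists in any nonzero restricted module. The paper packages this in one step --- taking $k_0$ maximal with $(\widehat{\fh}_-)^{k_0}w\ne 0$ and observing that $(\widehat{\fh}_-)^{k_0}w\subset\Omega_W$ --- rather than your downward induction on $N$ with successive kernels, but the content is the same, and your closing remark that $\mathcal{C}_0$(ii) is not needed here also matches the paper.
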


\begin{proof} Let $w$ be a nonzero vector in $W$. If $\fh (n)w=0$ for all $n\ge 1$, then $w\in \Omega_{W}$, so that
$\Omega_{W}\ne 0$. Now, assume $\fh (n)w\ne 0$ for some $n\ge 1$.
Since $W$ is restricted and since $\fh$ is finite dimensional,  there exists a positive integer $k$
such that $\fh(k)w\ne 0$ and $\fh (n)w=0$ for all $n>k$. Set
$$\mathcal{L}_{k}=\fh(1)+\cdots +\fh(k),$$
a finite-dimensional (abelian) subalgebra of $\widehat{\fh}_{-}$.
Note that $\mathcal{L}_{k}w\ne 0$  and that by Definition~\ref{def:C} \eqref{con:3},
for $u\in \fh,\ n\in\BZ_+$, $u(n)$  acts nilpotently on $W$. Then there exists a nonnegative integer $k_0$
such that $(\mathcal{L}_{k})^{k_0}w\ne 0$ and $(\mathcal{L}_{k})^{k_0+1}w=0$.
Consequently, we have $(\widehat\fh_-)^{k_0+1} w=0$
and $(\widehat\fh_-)^{k_0} w\ne0$.
Then we have $(\widehat\fh_-)^{k_0} w\subset \Omega_{W}$, proving $\Omega_{W}\ne 0$.
\end{proof}

Let $\ell\in \BF$ and let $\lambda_{0}\in\fh^*$.
Denote by $\BF_{\ell,\lambda_0}$ the one-dimensional $(\widehat\fh_-\oplus \fh\oplus \BF {\bf k})$-module $\BF$
with $\widehat\fh_{-}$
acting trivially, with $h$ acting as scalar $\lambda_0(h)$ for $h\in \fh$, and with $\bk$ acting as scalar $\ell$.
Form an induced module
\begin{equation}
    M(\ell,\lambda_0)
    =U(\widehat\fh)\otimes_{U(\widehat\fh_-\oplus\fh\oplus \BF {\bf k})}\BF_{\ell,\lambda_0}
    \simeq U(\widehat\fh_{+}),
\end{equation}
which is a restricted $\widehat\fh$-module of level $\ell$.
Set
$$\1_{\ell,\lambda_0}=1\otimes 1\in M(\ell,\lambda_0).$$

Furthermore, let $\lambda\in (\widehat\fh_+)^*$. For $u\in \fh,\ n\in \BZ_+$,
alternatively write $\lambda_{n}(u)=\lambda(u(-n))$.
Denote by $J(\lambda)$ the submodule of $M(\ell,\lambda_0)$ generated by vectors
\begin{eqnarray}\label{especial-vacuum}
(u(-m)-\lambda_{m}(u))\1_{\ell,\lambda_0},\  \  \  \   (u(-n)^p-\lambda_{n}(u)^p)\1_{\ell,\lambda_0}
\end{eqnarray}
 for $u\in \fh$, $m\in p\BZ_+$, $n\in \BZ_+\setminus p\BZ_+$. Note that  for $u\in \fh,\ n\in p\BZ_+$, we also have
 $$(u(-n)^{p}-\lambda_{n}(u)^{p})\1_{\ell,\lambda_0}\in J(\lambda)$$
 as
 $$(u(-n)^{p}-\lambda_{n}(u)^{p})\1_{\ell,\lambda_0}=(u(-n)-\lambda_{n}(u))^{p-1}(u(-n)-\lambda_{n}(u))\1_{\ell,\lambda_0}.$$
 With Lemma \ref{lcentral}, it can be readily seen that those vectors in (\ref{especial-vacuum})
lie in $\Omega_{M(\ell,\lambda_0)}$.
Set
\begin{equation}
    L_{\widehat\fh}(\ell,\lambda_0,\lambda)=M(\ell,\lambda_0)/ J(\lambda).
\end{equation}
 We have:

\begin{proposition}\label{th:L-irr}
Assume that $\BF$ is algebraically closed.
Let $\ell\in \BF$, $\lambda_0\in \fh^{*},\ \lambda\in (\widehat{\fh}_{+})^{*}$ with $\ell \ne 0$.
Then $L_{\widehat\fh}(\ell,\lambda_0,\lambda)$ is an irreducible $\widehat\fh$-module.
Furthermore, if $w$ is a vacuum vector in some $\widehat\fh$-module $W$  of level $\ell$ such that
\begin{enumerate}[(i)]
\item $u(0)w=\lambda_{0}(u)w$ for $u\in \fh$,
\item $u(-n)w=\lambda_{n}(u)w$ for $u\in \fh,\ n\in p\BZ_+$,
\item $u(-n)^pw=\lambda_{n}(u)^pw$ for $u\in \fh,\  n\in \BZ_+\setminus p\BZ_{+}$,
\end{enumerate}
then $U(\widehat{\fh})w$ is isomorphic to $L_{\widehat\fh}(\ell,\lambda_0,\lambda)$.
\end{proposition}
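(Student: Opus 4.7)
The plan is to construct an explicit concrete model $N$ of $L_{\widehat{\fh}}(\ell,\lambda_0,\lambda)$, verify it is irreducible via Lemma~\ref{heisenberg-S}, and then identify it with $L_{\widehat{\fh}}(\ell,\lambda_0,\lambda)$ through the universal property of $M(\ell,\lambda_0)$. Fix a basis $u_1,\ldots,u_d$ of $\fh$, set $g_{ij}=\langle u_i,u_j\rangle$, and with $S_d$ as in Section 4 define $f:S_d\to\BF$ by $f(i,n)=\lambda_n(u_i)^p$. Take $N=V[S_d,f]$ and equip it with an $\widehat{\fh}$-action at level $\ell$ by the following prescription: $\bk$ acts as $\ell$; $u(0)$ acts as the scalar $\lambda_0(u)$; for $k\in\BZ_+$, $u(kp)$ acts as $0$ and $u(-kp)$ as $\lambda_{kp}(u)$; and for $n\in\BZ_+\setminus p\BZ_+$, $u_i(-n)$ acts by multiplication by $x_{i,n}$ while $u_i(n)$ acts as the derivation $n\ell\sum_j g_{ij}\partial_{x_{j,n}}$ on $N$ (well-defined because $\partial_{x_{i,n}}$ kills $x_{i,n}^p-\lambda_n(u_i)^p$).

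A direct commutator check, aided by the Freshman-dream identity $(\sum_j c_jD_j)^p=\sum_j c_j^pD_j^p$ and by $\partial_{x_{i,n}}^p=0$ on $N$, shows that these operators satisfy the defining relations of $\widehat{\fh}$ at level $\ell$ and that $u(kp)$ and $u(k)^p$ act trivially for $k\in\BZ_+$, so $N$ is a restricted $\widehat{\fh}$-module. After an appropriate change of basis (using the inverse Gram matrix on the annihilation side), the subalgebra $\widehat{\fh}''=\sum_{n\notin p\BZ}\fh\otimes\BF t^n+\BF\bk$ is identified with the Heisenberg algebra $\mathcal{H}_{S_d}$ of Section 4 acting on $V[S_d,f]$ in the standard way; Lemma~\ref{heisenberg-S} (using $\ell\ne 0$) then gives that $N$ is irreducible as a $\widehat{\fh}''$-module, and since the remaining generators of $\widehat{\fh}$ act as scalars on $N$, it is irreducible as an $\widehat{\fh}$-module.

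Since $1\in N$ is a vacuum vector with $u(0)\cdot 1=\lambda_0(u)\cdot 1$ and $\bk\cdot 1=\ell\cdot 1$, the universal property of $M(\ell,\lambda_0)$ yields a unique $\widehat{\fh}$-homomorphism $\varphi:M(\ell,\lambda_0)\to N$ sending $\1_{\ell,\lambda_0}$ to $1$; by construction $\varphi$ kills every vector in \eqref{especial-vacuum}, so it descends to $\bar\varphi:L_{\widehat{\fh}}(\ell,\lambda_0,\lambda)\to N$. By Lemma~\ref{lcentral} the generators of $J(\lambda)$ are central in $U(\widehat{\fh})$, so $J(\lambda)$ coincides with the ideal of the commutative polynomial algebra $M(\ell,\lambda_0)\cong U(\widehat{\fh}_+)$ generated by them; consequently $L_{\widehat{\fh}}(\ell,\lambda_0,\lambda)$ is spanned by the residues of the monomials $u_{i_1}(-n_1)^{k_1}\cdots u_{i_r}(-n_r)^{k_r}\1_{\ell,\lambda_0}$ with distinct $(i_j,n_j)\in S_d$ and $0\le k_j\le p-1$, and $\bar\varphi$ carries these to the basis \eqref{ebasis} of $N$. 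Hence $\bar\varphi$ is an isomorphism and $L_{\widehat{\fh}}(\ell,\lambda_0,\lambda)$ is irreducible.

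For the second assertion, given $w\in W$ as in the hypothesis, the vacuum and level-$\ell$ conditions together with (i) supply, via the universal property of $M(\ell,\lambda_0)$, an $\widehat{\fh}$-homomorphism $\psi:M(\ell,\lambda_0)\to W$ with $\psi(\1_{\ell,\lambda_0})=w$; conditions (ii) and (iii) force $\psi$ to annihilate the generators in \eqref{especial-vacuum}, so $\psi$ descends to $\bar\psi:L_{\widehat{\fh}}(\ell,\lambda_0,\lambda)\to W$. Since its source is irreducible and $\bar\psi$ sends the residue of $\1_{\ell,\lambda_0}$ to $w\ne 0$, $\bar\psi$ is injective with image $U(\widehat{\fh})w$, giving the claimed isomorphism. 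I expect the main obstacle to be the bracket verification for $N$ and the identification of $\widehat{\fh}''$ with $\mathcal{H}_{S_d}$, together with the identification of $J(\lambda)$ as a commutative polynomial ideal; both steps rest crucially on Lemma~\ref{lcentral} and on the Frobenius-type identities specific to characteristic $p$.
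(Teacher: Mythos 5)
Your proof is correct and follows essentially the same route as the paper: realize the quotient concretely on the truncated polynomial algebra $V[S_d,f]$, deduce irreducibility from Lemma~\ref{heisenberg-S} via the identification of $\widehat{\fh}''$ with $\mathcal{H}_{S_d}$, and then use the universal property of $M(\ell,\lambda_0)$ together with the monomial spanning set (coming from the centrality of the generators of $J(\lambda)$, Lemma~\ref{lcentral}) to conclude that the induced map is an isomorphism. The only cosmetic difference is that you work with a general basis and its Gram matrix where the paper invokes algebraic closedness to choose an orthonormal basis; both identifications of $\widehat{\fh}''$ with $\mathcal{H}_{S_d}$ are valid.
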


\begin{proof}   For $m\in \BZ$, write $\fh(m)=\fh\otimes t^{m}$. Set
\begin{eqnarray}
\widehat{\fh}''=\coprod_{n\in \BZ\setminus p\BZ}\fh(n) \oplus \BF{\bf k},
\end{eqnarray}
which  is a subalgebra of $\widehat{\fh}$ and  a Heisenberg algebra itself.
As $\BF$ is algebraically closed, there is an orthonormal basis $\{u^{(1)},\ldots,u^{(d)}\}$ of $\fh$.
 Recall
$S_{d}=\{ (i,n)\ |\  1\le i\le d,\ n\in \BZ_+\setminus p\BZ_+\}.$
Associated to $S_{d}$, we have a Heisenberg algebra $\mathcal{H}_{S_{d}}$
and a commutative polynomial algebra
$$P[S_{d}]=\BF[x_{i,n}\ |\  (i,n)\in S_{d}],$$
which is naturally an $\mathcal{H}_{S_{d}}$-module of level $\ell$.
It can be readily seen that $\widehat{\fh}''\simeq \mathcal{H}_{S_{d}}$
with $\frac{1}{n}u^{(i)}(n)=a_{i,n}$ and $u^{(i)}(-n)=b_{i,n}$ for $(i,n)\in S_{d}$.
(Note that $n\ne 0$ in $\BF$ with $(i,n)\in S_d$.)
Then  $P[S_{d}]$ becomes an $\widehat{\fh}''$-module with
$${\bf k}=\ell,\   \   \   u^{(i)}(n)=\ell n\partial_{x_{i,n}}, \   \   \  u^{(i)}(-n)=x_{i,n}
\   \   \mbox{ for }(i,n)\in S_{d}.$$
For $1\le i\le d,\  n\in\BZ_+$, set
\begin{eqnarray}
\lambda_{i,n}=\lambda (u^{(i)}(-n)).
\end{eqnarray}
Note that $\lambda$ naturally gives rise to a function from $S_{d}$ to $\BF$, denoted by $\lambda$ again.
We then have a commutative associative algebra
\begin{eqnarray}
P[S_{d},\lambda]=V[S_{d},\lambda]=P[S_{d}]/I_{\lambda},
\end{eqnarray}
which is also an $\widehat{\fh}''$-module of level $\ell$, where
$$I_{\lambda}=\sum_{(i,n)\in S_{d}}(x_{i,n}^{p}-\lambda_{i,n}^{p})P[S_{d}],$$
an ideal  and an $\widehat{\fh}''$-submodule of $P[S_{d}]$.
With $\ell\ne 0$, from Lemma \ref{heisenberg-S},
$P[S_d,\lambda]$ is an irreducible $\widehat{\fh}''$-module.

For $u\in \fh,\ k\in p\BZ_+$, let $u(-k)$ act on $P[S_d]$ as scalar
$\lambda_{k}(u)$, let $u(k)$ act trivially, and let $u(0)$ act as scalar $\lambda_{0}(u)$. Then
$P[S_d]$ becomes an $\widehat{\fh}$-module of level $\ell$.
Furthermore, $P[S_d,\lambda]$ becomes an irreducible $\widehat{\fh}$-module of level $\ell$.

From the construction of $M(\ell,\lambda_0)$, there exists an $\widehat{\fh}$-module homomorphism
 $\theta: M(\ell,\lambda_0)\rightarrow  P[S_d,\lambda]$
with $\theta({\bf 1}_{\ell,\lambda_0})=1$. It can be readily seen that $\theta$ reduces to a homomorphism $\bar{\theta}$ from
 $L_{\widehat{\fh}}(\ell,\lambda_0,\lambda)$ onto   $P[S_d,\lambda]$. Note that
 $$L_{\widehat{\fh}}(\ell,\lambda_0,\lambda)=U(\widehat{\fh}){\bf 1}_{\ell,\lambda_0}
 =U(\widehat{\fh}''_{+}){\bf 1}_{\ell,\lambda_0}
 =S(\widehat{\fh}''_{+}){\bf 1}_{\ell,\lambda_0}=\left(S(\widehat{\fh}''_{+})/J(\lambda)''\right){\bf 1}_{\ell,\lambda_0},$$
where $J(\lambda)''$ denotes the ideal of $S(\widehat{\fh}''_{+})$, generated by
$u^{(i)}(-n)^{p}-\lambda_{i,n}^{p}$ for $(i,n)\in S_d$. We have
$$S(\widehat{\fh}''_{+})/J(\lambda)''\simeq P[S_d,\lambda]$$
as an algebra with $u^{(i)}(-n)+J(\lambda)''$ corresponding to $x_{i,n}$ for $(i,n)\in S_{d}$.
It follows that $\bar{\theta}$ is isomorphism.
 Consequently, $L_{\widehat{\fh}}(\ell,\lambda_0,\lambda)$ is an irreducible
 $\widehat{\fh}$-module. The second assertion follows immediately from the construction and irreducibility of
 $L_{\widehat{\fh}}(\ell,\lambda_0,\lambda)$.
\end{proof}

Recall that  with $d=\dim \fh$ and $S_{d}=\{ (i,n)\ |\  1\le i\le d,\ n\in \BZ_+\setminus p\BZ_+\}$
and recall  the unital commutative associative algebra $P[S_{d},\lambda]$.
For $(i,n)\in S_{d}$, set
\begin{eqnarray}
P[S_{d},\lambda]_{i,n}^{o}=\langle  x_{j,m}\ |\  (j,m)\in S_{d}\setminus \{ (i,n)\}  \rangle
\end{eqnarray}
(the subalgebra generated by $ x_{j,m}$ for $(j,m)\in S_{d}$ with $(j,m)\ne (i,n)$). Note that
\begin{eqnarray}
P[S_{d},\lambda]=P[S_{d},\lambda]_{i,n}^{o}\oplus x_{i,n}P[S_{d},\lambda]_{i,n}^{o}\oplus \cdots \oplus x_{i,n}^{p-1}P[S_{d},\lambda]_{i,n}^{o}.
\end{eqnarray}
Furthermore, we set
\begin{eqnarray}
P[S_{d},\lambda]_{i,n}'=\partial_{x_{i,n}}P[S_{d},\lambda]
=P[S_{d},\lambda]_{i,n}^{o}+x_{i,n}P[S_{d},\lambda]_{i,n}^{o}+\cdots +x_{i,n}^{p-2}P[S_{d},\lambda]_{i,n}^{o},
\end{eqnarray}
a subspace of $P[S_{d},\lambda]$.

For any subset $T$ of $S_{d}$, set
\begin{eqnarray}
P[S_{d},\lambda]_{T}=\langle  x_{i,n}\ |\  (i,n)\in T  \rangle.
\end{eqnarray}
We have:

\begin{lemma}\label{th:existenceS}
Let $T$ be a subset of $S_{d}$, $T_0$ a finite subset of $T$, and
let $\{f_{i,n}\ |\  (i,n)\in T_0\}$ be a set of elements of $P[S_{d},\lambda]_{T}$ with $f_{i,n}\in P[S_{d},\lambda]_{i,n}'$ such that
\begin{equation}\label{eq:existenceS-01}
    n\partial_{x_{i,n}}f_{j,m}=m\partial_{x_{j,m}}f_{i,n}\   \    \   \mbox{ for  } (i,n),\ (j,m)\in T_0.
\end{equation}
Then there exists $f\in P[S_{d},\lambda]_{T}$ such that
\begin{eqnarray*}
   n\partial_{x_{i,n}}f=f_{i,n}\   \    \   \mbox{ for all } (i,n)\in T_0.
\end{eqnarray*}
\end{lemma}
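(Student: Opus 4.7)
I will argue by induction on $|T_0|$, holding the statement for every admissible pair $(T,T_0)$ with $T_0\subseteq T\subseteq S_d$ and $T_0$ finite. The base case $|T_0|=0$ is handled by $f=0$. For the inductive step the idea is completely standard Poincar\'e-lemma style: solve one equation by a one-variable antidifferentiation, then show that the corrected data for the remaining variables form a strictly smaller instance of the same lemma. The only subtlety is that we are working in characteristic $p$ modulo the relations $x_{i,n}^{p}=\lambda_{i,n}^{p}$, so antidifferentiation can a priori fail, and the hypothesis $f_{i,n}\in P[S_d,\lambda]_{i,n}'$ is precisely what rules that out.

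Concretely, for the inductive step I would fix any $(i,n)\in T_0$ and set $T_0'=T_0\setminus\{(i,n)\}$. Using $f_{i,n}\in P[S_d,\lambda]_{i,n}'$, I expand $f_{i,n}=\sum_{k=0}^{p-2}x_{i,n}^{k}g_{k}$ with $g_k\in P[S_d,\lambda]_T$ free of $x_{i,n}$, and I define
\[
f^{*}=\sum_{k=0}^{p-2}\frac{1}{n(k+1)}\,x_{i,n}^{k+1}g_{k}\in P[S_d,\lambda]_{T}.
\]
This makes sense because $(i,n)\in S_d$ forces $n\not\equiv 0\pmod p$ and because $1,2,\dots,p-1$ are units in $\BF$. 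A direct computation gives $n\partial_{x_{i,n}}f^{*}=f_{i,n}$, and because the exponents of $x_{i,n}$ appearing in $f^{*}$ only range over $\{1,\dots,p-1\}$, the antiderivative genuinely stays inside $P[S_d,\lambda]_T$ without needing to invoke the relation $x_{i,n}^{p}=\lambda_{i,n}^{p}$.

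Next I would set $h_{j,m}=f_{j,m}-m\,\partial_{x_{j,m}}f^{*}$ for $(j,m)\in T_0'$ and verify three points. First, $h_{j,m}\in P[S_d,\lambda]_{j,m}'$, since any partial $\partial_{x_{j,m}}$ kills the top-degree component and so automatically lands in $P[S_d,\lambda]_{j,m}'$. Second, using commutativity of $\partial_{x_{i,n}}$ and $\partial_{x_{j,m}}$ together with \eqref{eq:existenceS-01},
\[
n\partial_{x_{i,n}}h_{j,m}=n\partial_{x_{i,n}}f_{j,m}-m\partial_{x_{j,m}}(n\partial_{x_{i,n}}f^{*})=m\partial_{x_{j,m}}f_{i,n}-m\partial_{x_{j,m}}f_{i,n}=0,
\]
which (again because $1,\dots,p-1$ are nonzero in $\BF$) forces $h_{j,m}\in P[S_d,\lambda]_{T\setminus\{(i,n)\}}$. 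Third, the same commutation trick shows that $\{h_{j,m}\}_{(j,m)\in T_0'}$ still satisfies the compatibility \eqref{eq:existenceS-01} among themselves.

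Hence $\{h_{j,m}:(j,m)\in T_0'\}$ is an instance of the lemma with ambient set $T\setminus\{(i,n)\}$ and index set $T_0'$ of strictly smaller cardinality. The inductive hypothesis supplies $f'\in P[S_d,\lambda]_{T\setminus\{(i,n)\}}$ with $m\partial_{x_{j,m}}f'=h_{j,m}$ for all $(j,m)\in T_0'$, and I would then take $f=f^{*}+f'\in P[S_d,\lambda]_T$. The equation at $(i,n)$ persists because $f'$ has no $x_{i,n}$-dependence, and for $(j,m)\in T_0'$ one has $m\partial_{x_{j,m}}f=m\partial_{x_{j,m}}f^{*}+h_{j,m}=f_{j,m}$ by the very definition of $h_{j,m}$. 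The hard step is really the single-variable antidifferentiation: keeping $f^{*}$ inside $P[S_d,\lambda]_T$ and ensuring the compatibility-preserving character of the construction both hinge on the vanishing of the $x_{i,n}^{p-1}$-component of $f_{i,n}$ and on $n$ being a unit in $\BF$.
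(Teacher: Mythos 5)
Your proof is correct and follows essentially the same route as the paper's: induction on $|T_0|$, with the one-variable antiderivative (the paper's operator $g_{i,n}$) made legitimate by the hypothesis $f_{i,n}\in P[S_{d},\lambda]_{i,n}'$ and by the fact that this operator commutes with the other partials. The only cosmetic difference is that you modify the remaining data $f_{j,m}\mapsto h_{j,m}$ and shrink the ambient variable set before invoking the inductive hypothesis, whereas the paper keeps the original data indexed by $T_0'$ inside the full $P[S_{d},\lambda]_{T}$ and adds the correction term $g_{j,m}\left(f_{j,m}-m\partial_{x_{j,m}}f'\right)$ at the end.
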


\begin{proof} For $(i,n)\in S_{d}$, define a linear operator $g_{i,n}$ on $P[S_{d},\lambda]$ by
\begin{equation*}
    g_{i,n} (x_{i,n}^k f)=\begin{cases}
        \frac{1}{n(k+1)} x_{i,n}^{k+1}f &\text{if } 0\le k\le p-2;\\
        0&\text{if } k=p-1,
    \end{cases}
\end{equation*}
where $f\in P[S_{d},\lambda]_{i,n}^{o}$.
Then
\begin{equation}\label{eq:existenceS-02}
    m\partial_{x_{j,m}}g_{i,n}=g_{i,n} m\partial_{x_{j,m}}\\
\end{equation}
on $P[S_{d},\lambda]$ for distinct $(i,n),(j,m)\in S_{d}$,
and
\begin{equation}\label{eq:existenceS-03}
    n\partial_{x_{i,n}}g_{i,n} f=f\   \   \   \mbox{ for any }f\in P[S_{d},\lambda]_{i,n}'=\partial_{x_{i,n}}P[S_{d},\lambda].
\end{equation}
In particular,  we have
\begin{equation}\label{eq:existenceS-04}
    n\partial_{x_{i,n}}g_{i,n} f_{i,n}=f_{i,n}\   \    \   \mbox{ for all } (i,n)\in T_0.
\end{equation}
From definition, we have $g_{i,n}P[S_{d},\lambda]_{T}\subset P[S_{d},\lambda]_{T}$
for $(i,n)\in T$.

We now proceed to prove the lemma by induction on $|T_0|$.
For $T_0=\{(i,n)\}$, taking $f=g_{i,n}f_{i,n}$,  by \eqref{eq:existenceS-04} we have
\begin{equation*}
    n\partial_{x_{i,n}}f=n\partial_{x_{i,n}}g_{i,n}f_{i,n}=f_{i,n}.
\end{equation*}
For the induction step, pick up $(j,m)\in T_0$ and set $T'_0=T_0\setminus\{(j,m)\}$.
There exists  $f'\in P[S_{d},\lambda]_{T}$ such that $n\partial_{x_{i,n}}f'=f_{i,n}$
for all $(i,n)\in T_0'$.
Set
\begin{equation*}
    f=f'+g_{j,m}f_{j,m}-g_{j,m}m\partial_{j,m}f'\in P[S_{d},\lambda]_{T}.
\end{equation*}
For $(i,n) \in T_0'$,
noting that $m\partial_{x_{j,m}}$ and $n\partial_{x_{i,n}}$ commute,
using \eqref{eq:existenceS-02} and \eqref{eq:existenceS-01},
we have
\begin{align*}
    n\partial_{x_{i,n}}f&=n\partial_{x_{i,n}}f'+n\partial_{x_{i,n}}g_{j,m}\left(f_{j,m}-m\partial_{x_{j,m}}f'\right)\\
    &=f_{i,n}+g_{j,m}\left(n\partial_{x_{i,n}}f_{j,m}-m\partial_{x_{j,m}}n\partial_{x_{i,n}}f'\right)\\
        &=f_{i,n}+g_{j,m}\left(n\partial_{x_{i,n}}f_{j,m}-m\partial_{x_{j,m}}f_{i,n}\right)\\
    &=f_{i,n}.
\end{align*}
On the other hand, using \eqref{eq:existenceS-04} and \eqref{eq:existenceS-03} we have
\begin{align*}
    m\partial_{x_{j,m}}f&=m\partial_{x_{j,m}}f'+m\partial_{x_{j,m}}g_{j,m}f_{j,m}-m\partial_{x_{j,m}}g_{j,m}(m\partial_{x_{j,m}}f')\\
    &=m\partial_{x_{j,m}}f'+f_{j,m}-m\partial_{x_{j,m}}f'\\
    &=f_{j,m},
\end{align*}
noticing that $m\partial_{x_{j,m}}f'\in P[S_{d},\lambda]_{j,m}'$.
This completes the proof.
\end{proof}

Now we have the following  analog of \cite[Theorem~1.7.3]{FLM}:

\begin{theorem}\label{th:h-mod-reducibility}
Let $\ell \in \BF^{\times}$. Then every restricted $\widehat\fh$-module of  level $\ell$
satisfying condition $\mathcal{C}_0$ is completely reducible.
\end{theorem}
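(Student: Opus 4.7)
The plan is to follow the classical argument for complete reducibility of restricted modules over a Heisenberg algebra (cf.\ \cite[Theorem~1.7.3]{FLM}), with the twist that in prime characteristic the center of $U(\widehat{\fh})$ contains extra commuting semisimple operators that must be simultaneously diagonalized, and with the last step resting on the antiderivative lemma~\ref{th:existenceS}. By Lemma~\ref{lcentral} the elements $u(-n)$ for $u\in\fh,\ n\in p\BN$ and $u(-n)^p$ for $u\in\fh,\ n\in\BZ_+\setminus p\BZ_+$ are central in $U(\widehat{\fh})$; let $\mathcal{Z}$ denote the commutative subalgebra they generate, each of whose generators acts semisimply on $W$ by $\mathcal{C}_0$. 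A standard decomposition for commuting semisimple operators yields $W=\bigoplus_\chi W^{(\chi)}$, where $\chi$ ranges over characters $\mathcal{Z}\to\BF$ and $W^{(\chi)}=\{w\in W\mid zw=\chi(z)w\ \forall z\in\mathcal{Z}\}$; each $W^{(\chi)}$ is $\widehat{\fh}$-stable (since $\mathcal{Z}$ is central) and again satisfies the hypotheses of the theorem, so it suffices to prove each $W^{(\chi)}$ completely reducible.

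Fix $\chi$. It determines $\lambda_0\in\fh^*$ and $\lambda\in(\widehat{\fh}_+)^*$ in the sense of Proposition~\ref{th:L-irr}, and by Lemma~\ref{th:vacuum} gives $\Omega_{W^{(\chi)}}\ne 0$. Since every element of $\Omega_{W^{(\chi)}}$ is automatically a $\chi$-eigenvector of $\mathcal{Z}$, the second assertion of Proposition~\ref{th:L-irr} implies that for every nonzero $w\in\Omega_{W^{(\chi)}}$ the cyclic submodule $U(\widehat{\fh})w$ is irreducible, isomorphic to $L_{\widehat{\fh}}(\ell,\lambda_0,\lambda)$, with one-dimensional vacuum space $\BF w$. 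Pick a basis $\{w_\alpha\}_{\alpha\in A}$ of $\Omega_{W^{(\chi)}}$ and set $M=\sum_\alpha U(\widehat{\fh})w_\alpha$. Exploiting the one-dimensionality of vacuum spaces, together with the fact that the vacuum space of a finite direct sum of these irreducibles is the direct sum of those one-dimensional vacuum spaces, a minimal-counterexample argument for a putative linear relation shows that the sum is direct: $M=\bigoplus_\alpha U(\widehat{\fh})w_\alpha$.

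The heart of the proof, and the main obstacle, is showing $M=W^{(\chi)}$. If not, then $W^{(\chi)}/M$ is a nonzero restricted module of level $\ell$ satisfying $\mathcal{C}_0$, so by Lemma~\ref{th:vacuum} it contains a vacuum vector; lifting, there exists $v\in W^{(\chi)}\setminus M$ with $\widehat{\fh}_-v\subseteq M$. It suffices to find $m\in M$ with $\widehat{\fh}_-m=\widehat{\fh}_-v$, for then $v-m\in\Omega_{W^{(\chi)}}=\bigoplus_\alpha\BF w_\alpha\subseteq M$ and so $v\in M$, a contradiction. Under the identification $U(\widehat{\fh})w_\alpha\simeq P[S_d,\lambda]$ furnished by the proof of Proposition~\ref{th:L-irr}, the operator $u^{(i)}(n)$ acts as $\ell n\,\partial_{x_{i,n}}$ for $n\in\BZ_+\setminus p\BZ_+$ and trivially for $n\in p\BZ_+$ (by $\mathcal{C}_0$). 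The relation $u^{(i)}(n)^p=0$ on $W$ forces each component of $u^{(i)}(n)v$ in $M$ to lie in $\ker\partial_{x_{i,n}}^{p-1}=P[S_d,\lambda]'_{i,n}$, while $[u^{(i)}(n),u^{(j)}(m)]=0$ for $n,m>0$ provides the compatibility condition $n\partial_{x_{i,n}}(u^{(j)}(m)v)=m\partial_{x_{j,m}}(u^{(i)}(n)v)$ required by Lemma~\ref{th:existenceS}. Applying Lemma~\ref{th:existenceS} in each of the finitely many $w_\alpha$-components where $v$ has nonzero image produces the needed $m$, completing the proof; Lemma~\ref{th:existenceS} was essentially developed for precisely this step.
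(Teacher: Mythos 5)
Your proposal is correct and follows essentially the same route as the paper's proof: the same eigenspace decomposition with respect to the central elements supplied by Lemma \ref{lcentral} and condition $\mathcal{C}_0$, the same identification of $U(\widehat{\fh})\Omega_W$ via Proposition \ref{th:L-irr}, and the same contradiction argument that lifts a vacuum vector of the quotient by solving the system $\ell n\,\partial_{x_{i,n}}h=s_{i,n}$ with Lemma \ref{th:existenceS}. The only point handled more explicitly in the paper is the choice of a \emph{finite} subset $T$ of $S_d$ (using restrictedness, so that $u^{(i)}(n)v=0$ for $n>n_0$ and all the components $s_{i,n}$ involve only variables from $T$) so that Lemma \ref{th:existenceS} applies and the resulting $h$ satisfies the required equations for \emph{all} $(i,n)\in S_d$, not just those in $T$.
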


\begin{proof} It is a slight modification of the proof in  \cite{FLM}.
Let $W$ be any restricted $\widehat\fh$-module of level $\ell$ satisfying condition $\mathcal{C}_0$.
For $\lambda_0\in \fh^{*},\ \lambda\in (\widehat{\fh}_{+})^{*}$, set
$$W_{\lambda_0,\lambda}=\{ w\in W\  \mid \  u(-m)w=\lambda_{m}(u)w, \  \  u(-n)^{p}w=\lambda_{n}(u)^pw
\  \  \mbox{ for }u\in \fh,\ m\in p\BN,\ n\in  \BZ_+\}.$$
As $u(-m),\ u(-n)^{p}$ (with $u,m,n$ given as above) lie in the center of $U(\widehat\fh)$, $W_{\lambda_0,\lambda}$
is an $\widehat\fh$-submodule. Furthermore, from  condition $\mathcal{C}_0$, $W$ is a direct sum of submodules $W_{\lambda_0,\lambda}$.
Then it suffices to show that each submodule $W_{\lambda_0,\lambda}$ is completely reducible.

Now, we simply assume $W=W_{\lambda_0,\lambda}$ for some $\lambda_0\in \fh^*,\  \lambda\in (\widehat{\fh}_{+})^*$.
Set $W^{0}=U(\widehat{\fh})\Omega_{W}\subset W$. In view of Proposition \ref{th:L-irr},
$W^{0}$ is a direct sum of irreducible submodules isomorphic to $L_{\widehat{\fh}}(\ell,\lambda_0,\lambda)$.
Then it suffices to show that $W=W^0$. Suppose instead that $W\ne W^{0}$.
As $W/ W^{0}$ is an $\widehat\fh$-module satisfying
condition $\mathcal{C}_0$, it contains a vacuum vector by Lemma \ref{th:vacuum}. That is,
there exists $v\in  W$ such that
$v\notin W^{0}$ and $u^{(i)}(n) v\in W^{0}$ for all $1\le i\le d$ and $n\in\BZ_+$.
We are going to show that there exits  $h\in W^{0}$ such that $u^{(i)}(n)h=u^{(i)}(n)v$ for all
$i=1,\ldots,d$ and $n\in\BZ_+$, so that $v-h\in \Omega_{W}$. Then
$v\in h+\Omega_{W}\subset W^{0}$, a contradiction.
Since $u^{(i)}(n)$ acts trivially on $W$ for $n\in p\BZ_+$ and $1\le i\le d$,
it suffices to find some $h\in W^{0}$ such that $u^{(i)}(n)h=u^{(i)}(n)v$
for $i=1,\ldots,d$ and $n\in\BZ_+\setminus p\BZ_+$.

Choose a basis $\{w_\gamma\}_{\gamma\in \Gamma}$ ($\Gamma$ an index set) of $\Omega_{W}$.
We have
\begin{equation*}
   W^{0}=\bigoplus_{\gamma\in\Gamma} U(\widehat{\fh}) w_\gamma,
\end{equation*}
where $U(\widehat{\fh}) w_\gamma\simeq P[S_{d},\lambda]$.
As $W$ is restricted, there exists $n_0\in\BZ_+$ such that $u^{(i)}(n) v=0$ for $i=1,\dots,d,\ n>n_0$.
Then there is a finite subset $\Gamma_0\subset \Gamma$ such that
$$u^{(i)}(n)v\in \bigoplus_{\gamma\in\Gamma_0} U(\widehat{\fh}) w_\gamma
\   \   \   \mbox{ for all }1\le i\le d,\ n\in \BZ_+.$$
For $(i,n)\in S_{d}$, $\gamma\in\Gamma_0$,
let $s_{in\gamma}$ be the component of $u^{(i)}(n)v$ in
$U(\widehat{\fh}) w_\gamma$ with respect to this decomposition. We have
$s_{in\gamma}=0$ whenever $n>n_0$.
For $(i,n), (j,m)\in S_{d}$, as
\begin{equation*}
    u^{(i)}(n)u^{(j)}(m)v=u^{(j)}(m)u^{(i)}(n)v,
\end{equation*}
we have
$u^{(i)}(n)s_{jm\gamma}=u^{(j)}(m)s_{in\gamma}$ for all $\gamma\in\Gamma_0$.
If we can find $h_\gamma\in U(\widehat{\fh}) w_\gamma$ such that
$u^{(i)}(n)h_\gamma=s_{in\gamma}$ for all $(i,n)\in S_{d}$, $\gamma\in\Gamma_0$,
then we can take $h=\sum_{\gamma\in\Gamma_0}h_\gamma$ and we will be done.

Fix $\gamma\in\Gamma_0$ and identify $U(\widehat{\fh}) w_\gamma$
with $P[S_d,\lambda]$. Then
\begin{equation*}
    n\partial_{x_{i,n}}s_{jm\gamma}=m\partial_{x_{j,m}}s_{in\gamma}\   \   \   \mbox{ for }(i,n), (j,m)\in S_{d}.
\end{equation*}
We claim that $s_{in\gamma}\in P[S_d,\lambda]_{i,n}'$ $\ (=\partial_{x_{i,n}}P[S_d,\lambda])$. Write
$$s_{in\gamma}=a_{0}+a_{1}x_{i,n}+\cdots + a_{p-1}x_{i,n}^{p-1},$$
where $a_{j}\in P[S_d,\lambda]_{i,n}^{o}$.
As $u^{(i)}(n)^{p}W=0$ by assumption, we have $u^{(i)}(n)^{p-1}(u^{(i)}(n)v)=0$.
Consequently,  $u^{(i)}(n)^{p-1}s_{in\gamma}=0$.
Since $u^{(i)}(n)^{p-1} x_{i,n}^{p-1}=(\ell n)^{p-1}(p-1)!$ and $u^{(i)}(n)^{p-1} x_{i,n}^{k}=0$ for $0\le k<p-1$,
we have
$$0=u^{(i)}(n)^{p-1}s_{in\gamma}=(\ell n)^{p-1}(p-1)!a_{p-1},$$
which implies $a_{p-1}=0$. This proves $s_{in\gamma}\in P[S_d,\lambda]_{i,n}'$.
There exists a finite subset $T$ of $S_{d}$ such that $\{ (i,n)\in S_{d}\ |\  n\le n_0\}\subset T$ and
$$s_{in\gamma}\in P[S_d,\lambda]_{T}\  \  \   \mbox{ for all }(i,n)\in S_{d}.$$
Then by Lemma~\ref{th:existenceS} with $T_0=T$, there exists $h\in P[S_d,\lambda]_{T}$ such that
\begin{equation*}
    \ell n\partial_{x_{i,n}}h=s_{in\gamma}\   \   \   \mbox{ for all  }(i,n)\in T.
\end{equation*}
Note that as $h\in P[S_d,\lambda]_{T}$ and $\{ (i,n)\in S_{d}\ |\  n\le n_0\}\subset T$, we have
$$m\partial_{x_{j,m}}h=0=s_{jm\gamma}\   \   \   \mbox{ for }(j,m)\in S_{d}\setminus T.$$
Therefore $\ell n\partial_{x_{i,n}}h=s_{in\gamma}$ for all  $(i,n)\in S_{d}$. This completes the proof.
\end{proof}

\section{ Heisenberg vertex algebras and their modules}

In this section, we study vertex algebras and their modules associated to Heisenberg Lie algebras.
We particularly study their simple quotient vertex algebras and irreducible modules.

Let $\fh$ be a finite-dimensional vector space over $\BF$ equipped with a
non-degenerate symmetric bilinear form $\langle\cdot,\cdot\rangle$.
Viewing $\fh$ as an abelian Lie algebra with $\langle\cdot,\cdot\rangle$
an invariant bilinear form, we have an affine Lie algebra $\widehat\fh$ associated to the pair
$(\fh,\langle\cdot,\cdot\rangle)$.

Let $\ell\in \BF$. We have a vertex algebra
$V_{\widehat\fh}(\ell,0)$, where the underlying space is the $\widehat\fh$-module with generator $\1$, subject to relations
$\bk\cdot {\bf 1}=\ell {\bf 1}$ and $\fh(n)\1=0$ for $n\in \BN$. As a vector space,
\begin{eqnarray}
V_{\widehat\fh}(\ell,0)=U(\widehat{\fh}_{+})=S(\widehat{\fh}_{+}).
\end{eqnarray}
Furthermore, $\fh$ is identified as a subspace of $V_{\widehat\fh}(\ell,0)$ through the linear map
$u\mapsto u(-1)\1$, which generates $V_{\widehat\fh}(\ell,0)$ as a vertex algebra.
Note that every irreducible restricted $\widehat{\fh}$-module of level $\ell$ is an irreducible $V_{\widehat{\fh}}(\ell,0)$-module.
Then $L_{\widehat{\fh}}(\ell,\lambda_0,\lambda)$ for $\lambda_0\in \fh^{*},\ \lambda\in (\widehat{\fh}_{+})^{*}$
are irreducible $V_{\widehat{\fh}}(\ell,0)$-modules.

Unlike in the case of characteristic zero, the vertex algebra $V_{\widehat\fh}(\ell,0)$ is {\em no longer} simple.
In the following, we shall study (determine) simple quotient vertex algebras of $V_{\widehat\fh}(\ell,0)$
and their irreducible modules. First, we have:

\begin{lemma}
For any $u\in \fh,\ n\in \BN$, $u(n)^{p}$ and $u(np)$ act trivially on $V_{\widehat\fh}(\ell,0)$, and
$u(-np){\bf 1}$ and $u(-n)^{p}{\bf 1}$ lie in the center of $V_{\widehat\fh}(\ell,0)$.
\end{lemma}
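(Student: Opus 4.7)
The first claim, that $u(n)^p$ and $u(np)$ annihilate $V_{\widehat\fh}(\ell,0)$ for $u\in\fh,\ n\in\BN$, is essentially a corollary of Lemma~\ref{lcentral}. For $n=0$ the element $u(0)$ is central in $\widehat\fh$ (the structure constant carries a factor of $m=0$) and $u(0)\1=0$, so $u(0)$, and a fortiori $u(0)^p$, acts as zero on the cyclic module $V_{\widehat\fh}(\ell,0)=U(\widehat\fh)\1$. For $n\geq 1$, Lemma~\ref{lcentral} says $u(np)$ and $u(n)^p$ are central in $U(\widehat\fh)$; both annihilate the vacuum (since $u(np)\1=0$ because $np\geq 1$, and $u(n)^p\1=u(n)^{p-1}\cdot u(n)\1=0$), and centrality then propagates this vanishing to all of $V_{\widehat\fh}(\ell,0)$.

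For the second claim, denote by $v$ either $u(-np)\1$ or $u(-n)^p\1$, and by $A$ the corresponding element $u(-np)$ or $u(-n)^p$ of $U(\widehat\fh)$; the case $n=0$ is already covered, so assume $n\geq 1$. By Lemma~\ref{lcentral}, $A$ is central in $U(\widehat\fh)$, hence commutes with every $w(m)$ for $w\in\fh,\ m\in\BZ$. I will show $v_i=0$ on $V$ for all $i\in\BN$, which is what it means for $v$ to lie in the center of the vertex algebra. The crucial step is the following skew-symmetry computation (Definition~\ref{def:B86VA}(iv)): for $w\in\fh$ and $i\in\BN$,
\begin{equation*}
v_i w = \sum_{k\geq 0}(-1)^{i+k+1}\D^{(k)}(w_{i+k} v) = \sum_{k\geq 0}(-1)^{i+k+1}\D^{(k)}\bigl(A\cdot w(i+k)\1\bigr) = 0,
\end{equation*}
because $A$ commutes with $w(i+k)$ (so $w_{i+k}v = w(i+k)A\1 = Aw(i+k)\1$) and $w(i+k)\1=0$ for $i+k\geq 0$. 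By the commutator formula at the end of Section~2, this forces $[Y(v,x_1),Y(w,x_2)]=0$ on $V$ for every $w\in\fh$.

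To extend commutativity from $\fh$ to all of $V$, I invoke the standard observation that the commutant $C(v):=\{w\in V\mid [Y(v,x_1),Y(w,x_2)]=0\}$ is a vertex subalgebra. It obviously contains $\1$; it contains $\fh$ by the previous paragraph; and closure under every $n$-th product follows from the Jacobi identity, which writes $Y(w^{(1)}_m w^{(2)},x_2)$ as a combination of the products $Y(w^{(1)},x_1)Y(w^{(2)},x_2)$ and $Y(w^{(2)},x_2)Y(w^{(1)},x_1)$, each of which commutes with $Y(v,x_0)$ whenever $w^{(1)},w^{(2)}\in C(v)$. Since $\fh$ generates $V_{\widehat\fh}(\ell,0)$ as a vertex algebra, $C(v)=V$, and a final application of the commutator formula gives $v_i=0$ on $V$ for every $i\in\BN$, as required.

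The principal obstacle is the vector $v=u(-n)^p\1$: unlike $u(-np)\1=\D^{(np-1)}u$, whose vertex operator is simply $\partial_x^{(np-1)}Y(u,x)$ and for which one can check the commutator directly using $\partial_x^{(np-1)}\partial_x=np\cdot\partial_x^{(np)}=0$ in characteristic $p$, there is no equally clean closed formula for $Y(u(-n)^p\1,x)$ in terms of $Y(u,x)$. The skew-symmetry maneuver above sidesteps this by reducing the whole calculation to the centrality of $A$ in $U(\widehat\fh)$ together with the vanishing $w(m)\1=0$ for $m\geq 0$, both of which are automatic.
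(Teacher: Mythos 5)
Your proof is correct and follows essentially the same route as the paper: the first claim is exactly the paper's observation that a central element of $U(\widehat\fh)$ killing $\1$ kills the cyclic module, and for the second claim both arguments reduce the commutator $[Y(v,x_1),Y(w,x_2)]$ for $w\in\fh$ to the centrality of $u(-np)$, $u(-n)^p$ together with $w(m)\1=0$ for $m\ge 0$, and then use that $\fh$ generates $V_{\widehat\fh}(\ell,0)$. The only cosmetic difference is that the paper computes $w_k\bigl(u(-n)^p\1\bigr)=u(-n)^p w(k)\1=0$ directly and applies the commutator formula, whereas you reach the same vanishing through skew symmetry; this changes nothing of substance.
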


\begin{proof} Note that $V_{\widehat\fh}(\ell,0)=U(\widehat\fh)\1$. If $a$ is a central element of $U(\widehat\fh)$
such that $a\cdot \1=0$, then $a$ acts trivially on the whole space $V_{\widehat\fh}(\ell,0)$.
Then the first part follows immediately from Lemma \ref{lcentral}. For any $v\in \fh,\ k\in \BN$, as
$v_k=v(k)$ we have
$$v_{k}u(-np){\bf 1}=u(-np)v_{k}{\bf 1}=0\   \mbox{ and }\  v_{k}u(-n)^{p}{\bf 1}=u(-n)^{p}v_{k}{\bf 1}=0,$$
so that
$$[Y(v,x_1),Y(u(-np){\bf 1},x_2)]=0\   \mbox{ and }\ [Y(v,x_1),Y(u(-n)^{p}{\bf 1},x_2)]=0.$$
Since $\fh$ generates $V_{\widehat\fh}(\ell,0)$ as a vertex algebra, it follows that
$u(-np){\bf 1}$ and $u(-n)^{p}{\bf 1}$ lie in the center of $V_{\widehat\fh}(\ell,0)$.
\end{proof}

Recall that every vertex algebra $V$ is naturally a $\B$-module
and a left ideal of $V$ is an ideal if and only if it is a $\B$-submodule
 as $\D^{(n)}u=u_{-n-1}\1$ for $n\in \BN,\ u\in V$. As $\fh$ generates $V_{\widehat\fh}(\ell,0)$ as a vertex algebra,
 it follows that a left ideal of $V_{\widehat\fh}(\ell,0)$ exactly amounts to an $\widehat{\fh}$-submodule.
Thus, an ideal of $V_{\widehat\fh}(\ell,0)$ amounts to an $\widehat{\fh}$-submodule which is also a $\B$-submodule.

The following is a technical result:

\begin{lemma}\label{th:Duin}
Let $u\in \fh,\ k, r\in \BN, \ n\in\BZ_+$.  Then
\begin{align}
    \D^{(k)}(u(-n)^p)&=0\  \   \   \text{ if }p\nmid k, \label{eq:Dkup02}\\
    \D^{(rp)}(u(-n)^p)&=\binom{n+r-1}{r}u(-n-r)^p,\label{eq:Dkup01}\\
      \D^{(k)}(u(-n))&=0\  \  \   \text{ if }p\mid n\  \text{ and } \  p\nmid k.\label{eq:Dkup04}
\end{align}
\end{lemma}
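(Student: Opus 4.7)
My plan is to use the conjugation formula from the lemma following Definition~\ref{def:B86VA}: for $u\in\fh$,
\begin{equation*}
e^{x\D}Y(u,y)e^{-x\D}=Y(u,y+x).
\end{equation*}
Extracting the coefficient of $y^{n-1}$ on both sides (for $n\ge 1$), expanding $(y+x)^{-m-1}=\sum_{j\ge 0}\binom{-m-1}{j}y^{-m-1-j}x^j$ and observing that only terms with $m\le -n$ contribute, I obtain the operator identity
\begin{equation*}
e^{x\D}u(-n)e^{-x\D}=\sum_{r\ge 0}\binom{n+r-1}{r}u(-n-r)\,x^r \quad\text{in } (\End V_{\widehat\fh}(\ell,0))[[x]].
\end{equation*}
This is the main computational tool.

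For parts (1) and (2), I raise both sides to the $p$-th power, using $(e^{x\D}Ae^{-x\D})^p=e^{x\D}A^p e^{-x\D}$. The coefficients $u(-n-r)$ for $r\ge 0$ pairwise commute, since $[u(-n-r),u(-n-s)]=-(2n+r+s)\langle u,u\rangle\delta_{2n+r+s,0}\bk=0$ when $n\ge 1$. Hence, by the Frobenius identity for a formal series with commuting coefficients in characteristic $p$, together with $c^p=c$ in the prime subfield $\BF_p\subseteq\BF$,
\begin{equation*}
e^{x\D}u(-n)^p e^{-x\D}=\sum_{r\ge 0}\binom{n+r-1}{r}u(-n-r)^p\,x^{rp}.
\end{equation*}
Applying this to $\1$ and using $e^{-x\D}\1=\1$ (since $\D^{(k)}\1=0$ for $k\ge 1$), the left side equals $\sum_{k\ge 0}\D^{(k)}(u(-n)^p\1)\,x^k$, so matching coefficients of $x^k$ reads off \eqref{eq:Dkup02} for $p\nmid k$ and \eqref{eq:Dkup01} for $k=rp$.

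For \eqref{eq:Dkup04}, the same conjugation identity applied to $\1$ yields $\D^{(k)}(u(-n)\1)=\binom{n+k-1}{k}u(-n-k)\1$; equivalently, this follows from $u(-n)\1=\D^{(n-1)}u$ combined with Lemma~\ref{rm:defVA}(4). It remains to show $\binom{n+k-1}{k}\equiv 0\pmod{p}$ when $p\mid n$ and $p\nmid k$. Writing $k=k_0+k_1 p+\cdots$ in base $p$ with $k_0\ge 1$ (forced by $p\nmid k$), the integer $k-1$ has base-$p$ units digit $k_0-1$; since $p\mid n$ contributes $0$ to the units place, $n+k-1$ also has units digit $k_0-1<k_0$. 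Lucas's theorem then supplies the factor $\binom{k_0-1}{k_0}=0$, giving the desired vanishing. The only step requiring care is the Frobenius computation on a formal series of operators, which is legitimate precisely because the operator coefficients pairwise commute.
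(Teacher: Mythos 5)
Your proof is correct and follows essentially the same route as the paper: both expand $e^{x\D}u(-n)$ as $\sum_{r\ge0}\binom{n+r-1}{r}u(-n-r)x^{r}$, use the commutativity of $\widehat{\fh}_{+}$ together with the freshman's-dream identity in characteristic $p$ and Fermat's little theorem to obtain $\sum_{r\ge0}\binom{n+r-1}{r}u(-n-r)^{p}x^{rp}$, and read off coefficients of $x^{k}$; the only cosmetic difference is that you derive the expansion from the conjugation formula acting on operators and then apply to $\1$, whereas the paper computes directly with the $\B$-module-algebra structure on $U(\widehat{\fh})$ (the two agree by Lemma \ref{th:ModuleAlg}). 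Your Lucas-theorem computation for $\binom{n+k-1}{k}\equiv 0$ simply makes explicit the vanishing $\binom{-n}{k}=0$ that the paper asserts.
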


\begin{proof} Noticing that $\widehat{\fh}_{+}$ is abelian and that
$\D^{(k)}(\widehat{\fh}_{+})\subset \widehat{\fh}_{+}$ for $k\in \BN$,
we have
\begin{align*}
    e^{x\D}(u(-n)^{p})&=\left(e^{x\D}u(-n)\right)^{p}=\left(\sum_{r\ge 0}(-1)^{r}\binom{-n}{r}u(-n-r)x^{r}\right)^{p}\\
    &=\sum_{r\ge 0}\binom{n+r-1}{r}^{p}u(-n-r)^{p}x^{rp}.
\end{align*}
From this we obtain \eqref{eq:Dkup02} immediately.
On the other hand, using Fermat's little theorem, we get
\begin{eqnarray*}
    \D^{(rp)}(u(-n)^p)
    =\binom{n+r-1}{r}^p u(-n-r)^p=\binom{n+r-1}{r}u(-n-r)^p,
\end{eqnarray*}
proving \eqref{eq:Dkup01}. If $p\mid n$ and $p\nmid k$, we have $\binom{-n}{k}=0$, so that
\begin{equation*}
    \D^{(k)}(u(-n))=(-1)^k\binom{-n}{k}u(-n-k)=0,
\end{equation*}
proving \eqref{eq:Dkup04}.
\end{proof}

Let $\lambda\in  (\widehat\fh_{+})^{*}$.
Denote by $J(\ell,\lambda)$ the $\widehat\fh$-submodule of $V_{\widehat\fh}(\ell,0)$ generated by vectors
\begin{equation}\label{egenerators}
\begin{split}
    & (u(-m)^p-\lambda_{m} (u)^p)\1  \   \   \  \  \mbox{ for }u\in \fh,\ m\in \BZ_+, \mbox{ and }\\
    & (u(-n)-\lambda_{n}(u))\1   \     \    \   \   \mbox{ for } u\in \fh,\ n\in p\BZ_+.
\end{split}
\end{equation}
Recall that these vectors in
(\ref{egenerators}) lie in $\Omega_{V_{\widehat\fh}(\ell,0)}$.
Set
\begin{equation}
    L_{\widehat\fh}(\ell,0,\lambda)=V_{\widehat\fh}(\ell,0)/ J(\ell,\lambda),
\end{equation}
an $\widehat\fh$-module of level $\ell$.
From Proposition~\ref{th:L-irr}, $L_{\widehat\fh}(\ell,0,\lambda)$ is an irreducible $\widehat\fh$-module,
i.e., $J(\ell,\lambda)$ is a maximal $\widehat\fh$-submodule of $V_{\widehat\fh}(\ell,0)$.
In fact, these are all the maximal $\widehat\fh$-submodules.

\begin{lemma}
$J(\ell,\lambda)$ with $\lambda\in  (\widehat\fh_{+})^{*}$ exhaust the maximal $\widehat{\fh}$-submodules
of $V_{\widehat\fh}(\ell,0)$.
\end{lemma}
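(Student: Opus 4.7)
The plan is: for any maximal $\widehat\fh$-submodule $M$ of $V:=V_{\widehat\fh}(\ell,0)$, construct $\lambda\in(\widehat\fh_+)^*$ such that every generator of $J(\ell,\lambda)$ lies in $M$. Since $J(\ell,\lambda)$ has just been shown to be a maximal $\widehat\fh$-submodule and $M\subsetneq V$, the inclusion $J(\ell,\lambda)\subseteq M$ will then force $M=J(\ell,\lambda)$.

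Set $\bar{V}:=V/M$ and $\bar{\1}:=\1+M$, so $\bar{V}$ is an irreducible $\widehat\fh$-module of level $\ell$ with cyclic vacuum generator $\bar{\1}$ on which $\fh$ acts trivially. The core step is to show that, for every $u\in\fh$, the element $u(-n)$ with $n\in p\BZ_+$ and the element $u(-n)^p$ with $n\in\BZ_+\setminus p\BZ_+$ each act on $\bar{V}$ as a scalar in $\BF$. By Lemma \ref{lcentral} these operators are central in $U(\widehat\fh)$, so they yield $\widehat\fh$-module endomorphisms of the irreducible module $\bar{V}$. Over the algebraically closed field $\BF$ (already in force in Proposition \ref{th:L-irr}), applying Schur's lemma in its Dixmier form to the cyclic module $\bar{V}$ over the countably dimensional algebra $U(\widehat\fh)$ shows that every such endomorphism is scalar. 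This furnishes elements $c_n(u),c'_n(u)\in\BF$ with $u(-n)\bar{\1}=c_n(u)\bar{\1}$ for $n\in p\BZ_+$ and $u(-n)^p\bar{\1}=c'_n(u)\bar{\1}$ for $n\in\BZ_+\setminus p\BZ_+$.

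Next I define $\lambda_n(u):=c_n(u)$ for $n\in p\BZ_+$ and $\lambda_n(u):=(c'_n(u))^{1/p}$ for $n\in\BZ_+\setminus p\BZ_+$; the $p$-th root exists uniquely because $\BF$ is algebraically closed and perfect. Linearity of $\lambda_n\colon\fh\to\BF$ follows from the commutativity of $\widehat\fh_+$ together with the characteristic-$p$ identities $(a+b)^p=a^p+b^p$ and $(ca)^p=c^pa^p$. For $m\in p\BZ_+$, iterating the scalar action also gives $u(-m)^p\bar{\1}=c_m(u)^p\bar{\1}=\lambda_m(u)^p\bar{\1}$. Hence every generator of $J(\ell,\lambda)$ appearing in \eqref{egenerators} maps to $0$ in $\bar{V}$, giving $J(\ell,\lambda)\subseteq M$ and so $M=J(\ell,\lambda)$ by maximality.

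The main obstacle is the Schur/Dixmier step: although each central operator automatically gives an endomorphism of $\bar{V}$ that commutes with the $\widehat\fh$-action, a priori it could act through a nontrivial field extension of $\BF$, and one must justify that it is actually an $\BF$-scalar. This is exactly where the algebraic closedness of $\BF$ becomes essential. Once this is settled, everything else is a direct check that the prescribed $\lambda$ makes all the generators of $J(\ell,\lambda)$ vanish in $\bar{V}$.
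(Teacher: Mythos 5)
Your proposal is correct and follows essentially the same route as the paper: pass to the irreducible quotient $V_{\widehat\fh}(\ell,0)/M$ of countable dimension, apply Schur's lemma in its Dixmier form to the central elements $u(-np)$ and $u(-n)^p$ to extract $\lambda$, deduce $J(\ell,\lambda)\subseteq M$, and conclude by maximality of $J(\ell,\lambda)$. Your write-up is slightly more careful than the paper's in spelling out the extraction of $p$-th roots and the linearity of each $\lambda_n$ via the Frobenius, but this is a refinement of the same argument rather than a different one.
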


\begin{proof}  Assume that $K$ is a maximal  $\widehat\fh$-submodule of $V_{\widehat\fh}(\ell,0)$,
 so that the quotient module $V_{\widehat\fh}(\ell,0)/K$ is irreducible.
 As $V_{\widehat\fh}(\ell,0)$ is clearly of countable dimension, $V_{\widehat\fh}(\ell,0)/K$ is of countable dimension.
 By Schur lemma (which states that if $U$ is an irreducible module of countable dimension for an associative algebra $A$,
then any central element of $A$ acts on $U$ as a scalar),
 there exists $\lambda\in  (\widehat\fh_{+})^{*}$
such that for $u\in \fh,\ n\in \BZ_{+}$, $u(-n)^{p}$ acts on $V_{\widehat\fh}(\ell,0)/K$
as scalar $\lambda(u(-n))^{p}$ and $u(-np)$ acts as scalar $\lambda(u(-np))$.
It follows that those vectors in (\ref{egenerators}) with this particular $\lambda$ belong to $K$.
Thus $J(\ell,\lambda)\subset K$.  Since  $V_{\widehat\fh}(\ell,0)/ J(\ell,\lambda)\ \left(=L_{\widehat\fh}(\ell,0,\lambda)\right)$
is an irreducible $\widehat{\fh}$-module, we must have $K=J(\ell,\lambda)$.
\end{proof}

Furthermore, we have:

\begin{proposition}\label{th:leftideal}
$J(\ell,\lambda)$ is an ideal of $V_{\widehat\fh}(\ell,0)$ if and only if
$\lambda_{n} (u)=0$ for all $u\in \fh,\ n> 1$.
\end{proposition}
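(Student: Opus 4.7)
The plan is to convert the vertex-algebra ideal condition into a $\B$-stability condition on $J(\ell,\lambda)$ and then test this stability on the generators listed in~\eqref{egenerators} by means of Lemma~\ref{th:Duin}. Recall from the paragraph preceding Lemma~\ref{th:Duin} that a left $\widehat{\fh}$-submodule of $V_{\widehat\fh}(\ell,0)$ is an ideal of the vertex algebra if and only if it is also a $\B$-submodule. Since $U(\widehat{\fh})$ is a $\B$-module algebra by Lemma~\ref{th:ModuleAlg}, the Hasse derivative $\D^{(k)}$ acts on products via the coproduct of $\B$; consequently the left ideal $J(\ell,\lambda)=U(\widehat{\fh})\cdot G$, with $G$ the set of generators in~\eqref{egenerators}, is $\B$-stable precisely when $\D^{(k)}g\in J(\ell,\lambda)$ for every $g\in G$ and every $k\in\BN$.

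For necessity, I would fix $u\in\fh$ and $r\in\BZ_+$ and apply $\D^{(rp)}$ to the generator $(u(-1)^p-\lambda_1(u)^p)\1$. Because $\D^{(rp)}\1=0$ for $rp\geq 1$, formula \eqref{eq:Dkup01} of Lemma~\ref{th:Duin} (with $n=1$) simplifies the result to $u(-1-r)^p\1$. If $J(\ell,\lambda)$ is an ideal, then $u(-1-r)^p\1\in J(\ell,\lambda)$; subtracting the defining generator $(u(-1-r)^p-\lambda_{1+r}(u)^p)\1\in J(\ell,\lambda)$ leaves $\lambda_{1+r}(u)^p\,\1\in J(\ell,\lambda)$. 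Since $L_{\widehat\fh}(\ell,0,\lambda)=V_{\widehat\fh}(\ell,0)/J(\ell,\lambda)$ is nonzero by Proposition~\ref{th:L-irr}, one has $\1\notin J(\ell,\lambda)$, so $\lambda_{1+r}(u)=0$ for every $r\geq 1$, i.e.\ $\lambda_n(u)=0$ whenever $n\geq 2$.

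For sufficiency, assume $\lambda_n(u)=0$ for all $u\in\fh$ and $n>1$. The generators in~\eqref{egenerators} reduce to three types: (a) $(u(-1)^p-\lambda_1(u)^p)\1$, (b) $u(-m)^p\1$ for $m\geq 2$, and (c) $u(-n)\1$ for $n\in p\BZ_+$. For $k\geq 1$ with $p\nmid k$, \eqref{eq:Dkup02} annihilates types (a) and (b) and \eqref{eq:Dkup04} annihilates type (c) (using $\D^{(k)}\1=0$), so $\D^{(k)}$ sends each such generator to $0$. For $k=rp$ with $r\geq 1$, \eqref{eq:Dkup01} sends types (a) and (b) to scalar multiples of $u(-1-r)^p\1$ or $u(-m-r)^p\1$, both of type (b) and hence in $J(\ell,\lambda)$; for type (c), writing $n=mp$ and applying the definition $\D^{(rp)}(u\otimes t^{-mp})=(-1)^{rp}\binom{-mp}{rp}u(-(m+r)p)$ together with Lucas' theorem yields $\D^{(rp)}(u(-mp))\cdot\1=\binom{m+r-1}{r}u(-(m+r)p)\1$, a scalar multiple of another type (c) generator. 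The only ingredient beyond Lemma~\ref{th:Duin} is the non-containment $\1\notin J(\ell,\lambda)$, which is supplied by Proposition~\ref{th:L-irr}; the mild Lucas-theorem identity needed for type (c) is the only real computational obstacle.
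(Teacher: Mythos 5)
Your proposal is correct and follows essentially the same route as the paper: both reduce the ideal condition to $\B$-stability of the left ideal generated by the vectors in \eqref{egenerators}, verify stability on those generators case by case via Lemma~\ref{th:Duin} (your extra Lucas-theorem computation for $\D^{(rp)}(u(-mp))\1$ is the same calculation the paper performs with $(-1)^k\binom{-n}{k}$ left unsimplified), and obtain necessity by applying $\D^{((n-1)p)}$ to $(u(-1)^p-\lambda_1(u)^p)\1$ and using $\1\notin J(\ell,\lambda)$.
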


\begin{proof} Notice that as $\fh$ generates $V_{\widehat\fh}(\ell,0)$ as a vertex algebra,
a left ideal of $V_{\widehat\fh}(\ell,0)$ amounts to an $\widehat\fh$-submodule of $V_{\widehat\fh}(\ell,0)$.
Thus, $J(\ell,\lambda)$ is a left ideal of $V_{\widehat\fh}(\ell,0)$.
Assume $\lambda_{n}(u)=0$ for $u\in \fh,\ n>1$. Denote by $Q$ the linear span of the vectors in
(\ref{egenerators}).
Let $k\in \BN,\ u\in \fh,\ n\in p\BZ_+$. If $p\mid k$, with $\lambda_{n}(u)=\lambda_{n+k}(u)=0$ we have
$$\D^{(k)}(u(-n)-\lambda_{n}(u))\1
=(-1)^{k}\binom{-n}{k}\left(u(-n-k)-\lambda_{n+k}(u)\right)\1\in Q.$$
If $p\nmid k$,  as $\binom{-n}{k}=0$ we have
$$\D^{(k)}(u(-n)-\lambda_{n}(u))\1=(-1)^{k}\binom{-n}{k}u(-n-k)\1=0\in Q.$$
Now, let $k\in \BN,\ u\in \fh,\ n\in \BZ_+$.
If $p\nmid k$ (which implies $k\ne 0$), using \eqref{eq:Dkup02}, we have
$$\D^{(k)}(u(-n)^{p}-\lambda_{n}(u)^{p})\1=\D^{(k)}(u(-n)^{p})\1=0\in Q.$$
Assume $p\mid k$ with $k=rp$. By (\ref{eq:Dkup01}), we have
\begin{align*}
\D^{(k)}(u(-1)^{p}-\lambda_{1}(u)^{p})\1&=u(-1-r)^{p}\1-\delta_{r,0}\lambda_{1}(u)^{p}\1\\
&=u(-1-r)^{p}\1-\lambda_{1+r}(u)^{p}\1\in Q.
\end{align*}
For $n\ge 2$, we have
\begin{align*}
\D^{(k)}(u(-n)^{p}-\lambda_{n}(u)^{p})\1&=\D^{(k)}(u(-n)^{p})\1=\binom{n+r-1}{r} u(-n-r)^p\1\\
&=\binom{n+r-1}{r}\left( u(-n-r)^p-\lambda_{n+r}(u)^{p}\right)\1\in Q.
\end{align*}
This shows that $Q$ is a $\B$-submodule.
It then follows that $J(\ell,\lambda)$ is a $\B$-submodule. Therefore, $J(\ell,\lambda)$ is an ideal.

Conversely, suppose $J(\ell,\lambda)$ is an ideal.
Then $J(\ell,\lambda)$ is both an $\widehat\fh$-submodule and a $\B$-submodule.
Let $u\in \fh$ and assume $n\ge2$. By \eqref{eq:Dkup01}, we have
\begin{align*}
  J(\ell,\lambda)\ni \D^{((n-1)p)}(u(-1)^p\1-\lambda_{1}(u)^p\1)=u(-n)^p\1,
\end{align*}
which implies $\lambda_{n}(u)^{p}\1=u(-n)^p\1-\left(u(-n)^p-\lambda_{n}(u)^{p}\right)\1\in J(\ell,\lambda)$.
 Since
$J(\ell,\lambda)\ne V_{\widehat\fh}(\ell,0)$, we have $\1\notin J(\ell,\lambda)$.
Then we conclude that $\lambda_{n}(u)=0$, as desired.
\end{proof}

Define
 \begin{eqnarray}
 \Lambda=\{ \lambda\in (\widehat\fh_{+})^{*}\ \mid \  \lambda(u(-n))=0\  \  \mbox{ for }u\in \fh,\ n\ge 2\}.
 \end{eqnarray}
As an immediate consequence of Proposition \ref{th:leftideal}, we have:

\begin{corollary}\label{th:simple}
Let $\ell\in \BF^{\times}$ and $\lambda\in \Lambda$.
Then $L_{\widehat\fh}(\ell,0,\lambda)$ is an irreducible $\widehat\fh$-module
and a simple vertex algebra.
Moreover, $L_{\widehat\fh}(\ell,0,\lambda)$ with $\lambda\in  (\widehat\fh_{+})^{*}$
exhaust simple quotient vertex algebras of $V_{\widehat\fh}(\ell,0)$, which are also irreducible
$\widehat\fh$-modules.
\end{corollary}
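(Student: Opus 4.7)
The plan is to treat the two assertions separately. For the first, suppose $\lambda\in\Lambda$. Proposition~\ref{th:leftideal} guarantees that $J(\ell,\lambda)$ is a vertex algebra ideal of $V_{\widehat\fh}(\ell,0)$, so $L_{\widehat\fh}(\ell,0,\lambda)$ inherits a quotient vertex algebra structure, and Proposition~\ref{th:L-irr} (applicable because $\ell\ne 0$) says it is an irreducible $\widehat\fh$-module. Since $\fh$ continues to generate the quotient as a vertex algebra, any vertex algebra ideal of $L_{\widehat\fh}(\ell,0,\lambda)$ is in particular a $\widehat\fh$-submodule, hence $0$ or everything; so $L_{\widehat\fh}(\ell,0,\lambda)$ is a simple vertex algebra.

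For the exhaustion statement, I would let $V'=V_{\widehat\fh}(\ell,0)/I$ be an arbitrary simple quotient vertex algebra and aim to identify $I$ with some $J(\ell,\lambda)$, $\lambda\in\Lambda$. The heart of the argument is a Dixmier--Schur step: $V_{\widehat\fh}(\ell,0)\cong S(\widehat\fh_{+})$ has countable dimension, so $V'$ does too, and the adjoint $V'$-module is irreducible since $V'$ is a simple vertex algebra. Thus, for any $v\in V'$ with $[Y(v,x_1),Y(u,x_2)]=0$ for all $u\in V'$, the operator $v_{-1}$ commutes with every $u_m$, lies in $\End_{V'}(V')$, and hence equals a scalar over an algebraically closed $\BF$ (the standing assumption in Proposition~\ref{th:L-irr}); therefore $v=v_{-1}\1\in\BF\1$. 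Applying this to the images in $V'$ of the central vectors $u(-np)\1$ and $u(-n)^{p}\1$ of $V_{\widehat\fh}(\ell,0)$ (centrality shown in the first lemma of Section~5), and extracting consistent $p$-th roots using perfectness of $\BF$, produces $\lambda\in(\widehat\fh_{+})^{*}$ with $J(\ell,\lambda)\subseteq I$. The lemma preceding Proposition~\ref{th:leftideal} makes $J(\ell,\lambda)$ already maximal among $\widehat\fh$-submodules of $V_{\widehat\fh}(\ell,0)$, and $I$ is a proper $\widehat\fh$-submodule, so $I=J(\ell,\lambda)$; since $I$ is a vertex algebra ideal, Proposition~\ref{th:leftideal} then forces $\lambda\in\Lambda$. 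Irreducibility of $V'$ as an $\widehat\fh$-module is immediate from Proposition~\ref{th:L-irr}.

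The main obstacle I anticipate is the Dixmier--Schur step, which needs countable-dimensionality together with simplicity of $V'$ as a vertex algebra to ensure $\End_{V'}(V')=\BF$; one should write this out carefully in order to justify passing from ``$Y(v,x)$ central'' to ``$v\in\BF\1$''. A secondary bookkeeping issue is the consistency of defining $\lambda_{n}(u)$ for $n\in p\BZ_{+}$ from both $u(-n)\1-\lambda_{n}(u)\1\in I$ and $u(-n)^{p}\1-\lambda_{n}(u)^{p}\1\in I$; this is automatic because the first relation implies the second, and uniqueness of $p$-th roots over a perfect field then makes $\lambda_{n}$ well defined on all of $\widehat\fh_{+}$.
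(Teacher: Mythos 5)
The first half of your argument is correct and is exactly what the paper intends: for $\lambda\in\Lambda$, Proposition~\ref{th:leftideal} makes $J(\ell,\lambda)$ an ideal, Proposition~\ref{th:L-irr} makes the quotient an irreducible $\widehat\fh$-module, and since $\fh$ generates the quotient, every ideal is in particular an $\widehat\fh$-submodule, whence simplicity.

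The exhaustion argument, however, has a genuine gap at the sentence ``the adjoint $V'$-module is irreducible since $V'$ is a simple vertex algebra.'' In this paper a submodule of the adjoint module of $V'$ is the same as a left ideal, i.e.\ (since $\fh$ generates $V'$) an $\widehat\fh$-submodule, whereas a two-sided ideal is a left ideal that is moreover $\B$-stable. Simplicity of $V'$ as a vertex algebra only rules out the latter, so irreducibility of the adjoint module is precisely the clause ``which are also irreducible $\widehat\fh$-modules'' that the Corollary asserts --- you assume it at the start of the argument and only derive it at the end. The implication ``simple vertex algebra $\Rightarrow$ irreducible adjoint module'' is false in general: $\BF[t]$ in characteristic $0$ with $\D=d/dt$ is a simple commutative vertex algebra whose adjoint module has the proper submodule $t\BF[t]$. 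Without irreducibility of $V'$ over the algebra generated by the modes $u_m$, the Dixmier--Schur step does not apply to $v_{-1}\in\End_{V'}(V')$. Nor can you repair it by passing to the algebra generated by the modes together with the $\D^{(k)}$ (over which $V'$ \emph{is} irreducible, by simplicity): the operators in question are not central there, since conjugating $u(-n)^{p}$ by $e^{x\D}$ gives $\sum_{j}\binom{n+j-1}{j}u(-n-j)^{p}x^{jp}$, so $u(-n)^{p}$ fails to commute with $\D^{(p)}$. Note also that if your premise were available, the whole Dixmier--Schur detour would be superfluous: $I$ would then be a maximal $\widehat\fh$-submodule, hence equal to some $J(\ell,\mu)$ by the lemma preceding Proposition~\ref{th:leftideal}, and $\mu\in\Lambda$ would follow from Proposition~\ref{th:leftideal} because $I$ is $\B$-stable. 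What is actually missing --- both from your write-up and from the paper's ``immediate consequence'' --- is an argument that a maximal two-sided ideal of $V_{\widehat\fh}(\ell,0)$ is automatically a maximal $\widehat\fh$-submodule (equivalently, that the central vectors $u(-np)\1$ and $u(-n)^{p}\1$ become scalar multiples of $\1$ in any simple quotient); this needs to be supplied by some means other than Schur's lemma applied to a module not yet known to be irreducible.
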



Next, we determine all modules for vertex algebras $L_{\widehat\fh}(\ell,0,\lambda)$.
To this end, we need the following technical result (cf. \cite{DL}):

\begin{lemma}\label{th:a(x)_n^p}
Let $V$ be a vertex algebra and let $(W,Y_{W})$ be a $V$-module. Let $a\in V$ be such that
$a_{n}$, $n\in\BN$, mutually commute and
$a_{-n}$, $n\in\BZ_+$, mutually commute. Then, for $n\in\BZ_+$,
\begin{equation*}
    Y_{W}(a_{-n}^p\1,x)=\sum_{j\ge0}\binom{n+j-1}{j} a_{-n-j}^p x^{j p}
            +\sum_{j\ge0}(-1)^{1-n}\binom{n+j-1}{j} a_{j}^p x^{-p(n+j)}.
\end{equation*}
In particular, we have
\begin{equation*}
    Y_{W}(a_{-1}^p \1,x)=\sum_{j\in\BZ} a_{j}^p x^{-p(j+1)}.
\end{equation*}
\end{lemma}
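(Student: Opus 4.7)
The plan is to express $Y_W(a_{-n}^p\1, x)$ as a sum of two $p$-th powers of simpler operators and then evaluate each using the Frobenius in characteristic $p$.

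First I would split $Y_W(a,x) = A^+(x) + A^-(x)$, where $A^+(x) = \sum_{m\ge 1} a_{-m} x^{m-1}$ is the part regular at $x=0$ and $A^-(x) = \sum_{m\ge 0} a_m x^{-m-1}$ is the singular part, and set $B_n^+(x) = \partial_x^{(n-1)} A^+(x)$ and $B_n^-(x) = \partial_x^{(n-1)} A^-(x)$, so that $Y_W(a_{-n}\1, x) = B_n^+(x) + B_n^-(x)$. Starting from the module iterate formula
$$Y_W(a_{-n}v,x) = \Res_{x_1}\!\left[(x_1-x)^{-n}Y_W(a,x_1)Y_W(v,x) - (-x+x_1)^{-n}Y_W(v,x)Y_W(a,x_1)\right]$$
and expanding $(x_1-x)^{-n}$ and $(-x+x_1)^{-n}$ in the standard regions, a direct residue computation yields the normal-ordering identity
$$Y_W(a_{-n}v,x) = B_n^+(x)Y_W(v,x) + Y_W(v,x)B_n^-(x) \qquad (v\in V).$$

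Next, iterating this identity with $v = a_{-n}^{k-1}\1$ gives the recursion $C_{k+1}(x) = B_n^+(x)C_k(x)+C_k(x)B_n^-(x)$ for $C_k(x) := Y_W(a_{-n}^k\1,x)$. A straightforward induction on $k$, using Pascal's identity $\binom{k}{i}+\binom{k}{i-1}=\binom{k+1}{i}$, produces the ordered closed form
$$C_k(x) = \sum_{i=0}^k \binom{k}{i}(B_n^+(x))^i\,(B_n^-(x))^{k-i}.$$
Specializing to $k=p$ and using $\binom{p}{i}=0$ in $\BF$ for $0<i<p$ collapses this to $Y_W(a_{-n}^p\1,x) = (B_n^+(x))^p + (B_n^-(x))^p$. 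By hypothesis, the operator coefficients of $B_n^+(x)$ (scalar multiples of $a_{-n-j}$ for $j\ge 0$) mutually commute, and likewise those of $B_n^-(x)$ (scalar multiples of $a_j$ for $j\ge 0$). The Frobenius identity $(\sum c_i T_i)^p = \sum c_i^p T_i^p$ for commuting $T_i$ in characteristic $p$, combined with Fermat's little theorem to reduce $\binom{n+j-1}{j}^p = \binom{n+j-1}{j}$ and with $(-1)^{p(n-1)} = (-1)^{1-n}$, then converts $(B_n^+(x))^p$ and $(B_n^-(x))^p$ into exactly the first and second sums of the stated formula. The special case $n=1$ follows by reindexing the first sum with $j\mapsto -1-j$ and combining with the second into a single sum over $j\in\BZ$.

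I expect the main obstacle to be establishing the normal-ordering identity in the first step: the residue manipulation must be done carefully so that the signs arising from the two expansions of $(\pm x + x_1)^{-n}$ combine correctly and so that all binomial coefficients are tracked exactly as elements of the prime subfield of $\BF$. The subsequent combinatorial induction and Frobenius computation are then essentially mechanical.
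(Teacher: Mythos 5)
Your proof is correct and follows essentially the same route as the paper: your $B_n^{+}(x)$ and $B_n^{-}(x)$ are exactly the paper's series $A$ and $B$, the normal-ordering identity $Y_W(a_{-n}v,x)=AY_W(v,x)+Y_W(v,x)B$ (which the paper extracts directly from the Jacobi identity, as you do via the residue/iterate formula) drives the same induction on $k$, and the collapse at $k=p$ via $\binom{p}{i}=0$ followed by Frobenius and Fermat's little theorem is identical. No gaps.
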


\begin{proof}
Let $n\in\BZ_+$. Set
\begin{align*}
    A=\sum_{j\ge0}\binom{-n}{j}(-1)^j a_{-n-j}x^{j},\  \  \  \
    B=-\sum_{j\ge0}\binom{-n}{j}(-1)^{-n-j}a_{j}x^{-n-j}.
\end{align*}
We first use induction on $k$ to show that
\begin{equation}\label{eq:a(x)_n^p01}
    Y_{W}\left((a_{-n})^k \1,x\right) =\sum_{j=0}^k\binom{k}{j}A^{k-j}B^j
\end{equation}
for all $k\in \BZ_+$.
From the Jacobi identity, we have
$$Y_{W}(a_{-n}v,x)=AY_{W}(v,x)+Y_{W}(v,x)B$$
for $v\in V$. In particular,
$Y_{W}(a_{-n}\1,x)=A+B$. The induction step is given by
\begin{align*}
    Y_{W}\left((a_{-n})^{k+1}\1,x\right)&=Y_{W}\left(a_{-n}(a_{-n})^k \1,x\right)\\
    &=A\sum_{j=0}^k\binom{k}{j}A^{k-j}B^j +\sum_{j=0}^k\binom{k}{j}A^{k-j}B^j B \\
    &=\sum_{j=0}^k \binom{k}{j}A^{k+1-j}B^j +\sum_{j=0}^k \binom{k}{j}A^{k-j}B^{j+1} \\
    &=\sum_{j=0}^{k+1}\binom{k+1}{j} A^{k+1-j}B^j .
\end{align*}
Taking $k=p$ in \eqref{eq:a(x)_n^p01} we get $Y_{W}(a_{-n}^p\1,x) =A^p +B^p$ as $\binom{p}{j}=0$ for $0<j<p$.
Using Fermat's little theorem, we obtain
\begin{align*}
    Y_{W}(a_{-n}^p\1,x)&=A^p+B^p\\
    &=\sum_{j\ge0}(-1)^{j p}\binom{-n}{j}^p a_{-n-j}^p x^{j p}
            +\sum_{j\ge0}(-1)^{(1-n-j)p}\binom{-n}{j}^p a_{j}^p x^{-p(n+j)}\\
    &=\sum_{j\ge0}(-1)^{j}\binom{-n}{j} a_{-n-j}^p x^{j p}
            +\sum_{j\ge0}(-1)^{1-n-j}\binom{-n}{j} a_{j}^p x^{-p(n+j)}\\
    &=\sum_{j\ge0}\binom{n+j-1}{j} a_{-n-j}^p x^{j p}
            +\sum_{j\ge0}(-1)^{1-n}\binom{n+j-1}{j} a_{j}^p x^{-p(n+j)},
\end{align*}
as desired.
\end{proof}

The following is a key result for our goal:

\begin{proposition}\label{th:L-module-property}
Let $\ell\in \BF$ and $\lambda\in \Lambda$.
Suppose that $W$ is an $L_{\widehat\fh}(\ell,0,\lambda)$-module.
Then $W$ is a restricted $\widehat\fh$-module of level $\ell$, satisfying the following conditions
for $u\in \fh$:
\begin{enumerate}[(i)]
\item $u(n)$ act trivially on $W$ for $n\in p\BZ$;
\item $u(n)^p$ act trivially on $W$ for $n\in \BZ$ with $n\ne -1$;
\item $u(-1)^p$ acts on $W$ as scalar $\lambda(u(-1))^p$.
\end{enumerate}
\end{proposition}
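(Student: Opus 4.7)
First, since $L_{\widehat{\fh}}(\ell,0,\lambda)$ is by definition the quotient vertex algebra $V_{\widehat{\fh}}(\ell,0)/J(\ell,\lambda)$, the module $W$ pulls back to a $V_{\widehat{\fh}}(\ell,0)$-module satisfying $Y_W(v,x)=0$ for every $v\in J(\ell,\lambda)$. By the proposition on modules for affine vertex algebras in Section 3, this pullback automatically endows $W$ with the structure of a restricted $\widehat{\fh}$-module of level $\ell$, with generating fields $Y_W(u,x)=\sum_{n\in\BZ}u(n)x^{-n-1}$ for $u\in\fh$. The three remaining assertions should therefore follow by expressing the $Y_W$-images of suitable generators of $J(\ell,\lambda)$ in terms of the modes $u(n)$.

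For (ii) and (iii), the plan is to use just the single generator $\big(u(-1)^p-\lambda_1(u)^p\big)\1\in J(\ell,\lambda)$. Since $[u(m),u(n)]=m\langle u,u\rangle\delta_{m+n,0}\bk$ vanishes whenever $m,n$ are both nonnegative or both negative, the hypotheses of Lemma \ref{th:a(x)_n^p} hold for $a=u$, and taking $n=1$ in that lemma yields
$$Y_W(u(-1)^p\1,x)=\sum_{j\in\BZ}u(j)^p\,x^{-p(j+1)}.$$
On the other hand, $Y_W(\lambda_1(u)^p\1,x)=\lambda_1(u)^p\cdot 1_W$ by the vacuum property. Matching the $x^0$ coefficient gives $u(-1)^p=\lambda(u(-1))^p$, which is (iii); the remaining powers of $x$ force $u(j)^p=0$ for every $j\ne -1$, which is all of (ii).

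For (i), I would use the generator $u(-p)\1\in J(\ell,\lambda)$, which lies in $J(\ell,\lambda)$ because $p\in p\BZ_+$ and $\lambda_p(u)=0$ (as $p\ge 2$ and $\lambda\in\Lambda$). Since $u(-p)\1=\D^{(p-1)}(u(-1)\1)$, applying the conjugation-type relation $Y_W(\D^{(k)}v,x)=\partial_x^{(k)}Y_W(v,x)$ (which holds on every $V$-module by the remark following Lemma \ref{th:VB-module}) gives
$$0=Y_W(u(-p)\1,x)=\partial_x^{(p-1)}\sum_{m\in\BZ}u(m)x^{-m-1}=\sum_{m\in\BZ}\binom{-m-1}{p-1}u(m)\,x^{-m-p}.$$
So it suffices to show that $\binom{-m-1}{p-1}\not\equiv 0\pmod p$ exactly when $m\in p\BZ$. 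Setting $n=-m-1$, one checks by Lucas's theorem applied to $n\ge 0$ directly, and to the identity $\binom{-r}{p-1}=(-1)^{p-1}\binom{r+p-2}{p-1}$ when $n<0$, that $\binom{n}{p-1}\not\equiv 0\pmod p$ iff the lowest base-$p$ digit of $n$ equals $p-1$, i.e., $n\equiv -1\pmod p$, i.e., $m\in p\BZ$. Consequently $u(m)=0$ on $W$ for all $m\in p\BZ$, giving (i).

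The only genuinely delicate step is the Lucas-theorem bookkeeping in the last paragraph; the rest is a clean unwinding of the quotient relations in $J(\ell,\lambda)$ via the computational Lemma \ref{th:a(x)_n^p} and the Hasse-derivative formula for vertex operators.
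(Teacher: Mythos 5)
Your proof is correct and follows essentially the same route as the paper's: both pull $W$ back to a restricted $\widehat\fh$-module of level $\ell$ via the Section 3 proposition, derive (ii) and (iii) from the generator $\bigl(u(-1)^p-\lambda_1(u)^p\bigr)\1$ using Lemma \ref{th:a(x)_n^p}, and derive (i) from $u(-p)\1\in J(\ell,\lambda)$. The only cosmetic difference is in (i): the paper expands $Y_W(u(-p)\1,x)$ directly by the iterate formula and applies Lucas' theorem to $\binom{p+j-1}{j}$, whereas you write $u(-p)\1=\D^{(p-1)}u$ and analyze $\binom{-m-1}{p-1}$ via the Hasse-derivative formula; both reduce to the same divisibility check and are equally valid.
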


\begin{proof} As $u(-p)\1 =\lambda(u(-p))\1=0$ in $L_{\widehat\fh}(\ell,0,\lambda)$, we have $Y_W(u(-p)\1,x)=0.$
On the other hand,  we have
\begin{equation}\label{eq:L-module-property01}
\begin{split}
    \quad Y_W(u(-p)\1,x)
    &=\sum_{j\ge0}\binom{-p}{j} (-1)^{j}\left(u(-p-j)x^j-(-1)^{p}u(j) x^{-p-j}\right)\\
    &=\sum_{j\ge0}\binom{p+j-1}{j}\left( u(-p-j) x^j+u(j) x^{-p-j}\right).
\end{split}
\end{equation}
Note that from Lucas' theorem, $\binom{p+j-1}{j}=1$ when $p\mid j$.
Then (i) holds.

Since $u(-1)^p\1=\lambda(u(-1))^p\1$ in $L_{\widehat\fh}(\ell,0,\lambda)$,
it follows from Lemma~\ref{th:a(x)_n^p} that
\begin{align*}
  \lambda(u(-1))^p 1_W&=Y_W(u(-1)^p\1,x) =\sum_{j\in\BZ} u(j)^p x^{(-j-1)p}.
\end{align*}
Then (ii) and (iii) follow immediately.
\end{proof}

Now, we are in a position to present the main result of the paper:

\begin{theorem}\label{th:main}
Let $\ell\in \BF^{\times}$ and $\lambda\in \Lambda$.
Then every $L_{\widehat\fh}(\ell,0,\lambda)$-module is completely reducible and the adjoint module
$L_{\widehat\fh}(\ell,0,\lambda)$ is
the only irreducible $L_{\widehat\fh}(\ell,0,\lambda)$-module up to equivalence.
\end{theorem}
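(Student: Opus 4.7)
The plan is to reduce everything to the complete reducibility theorem for a certain category of $\widehat\fh$-modules (Theorem~\ref{th:h-mod-reducibility}) together with the classification of the irreducible modules in that category (Proposition~\ref{th:L-irr}). The bridge is supplied by Proposition~\ref{th:L-module-property}, which already translates the vertex-algebra relations in $L_{\widehat\fh}(\ell,0,\lambda)$ into strong constraints on the $\widehat\fh$-action.

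First, I would let $W$ be an $L_{\widehat\fh}(\ell,0,\lambda)$-module and verify that it satisfies condition $\mathcal{C}_0$ as a restricted $\widehat\fh$-module of level $\ell$. Proposition~\ref{th:L-module-property} asserts that $u(n)$ acts trivially on $W$ for $n\in p\BZ$, while $u(n)^p$ acts trivially for $n\ne -1$ and $u(-1)^p$ acts as the scalar $\lambda(u(-1))^p$. In particular, for $u\in \fh$ and $n\in \BZ_+$, the operators $u(np)$ and $u(n)^p$ act trivially, while $u(-np)$ acts as $0$ and $u(-n)^p$ acts as a scalar (either $0$ or $\lambda(u(-1))^p$), hence semisimply. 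These are precisely the requirements in Definition~\ref{def:C}, so Theorem~\ref{th:h-mod-reducibility} yields the complete reducibility of $W$ as an $\widehat\fh$-module.

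Next, I would identify each irreducible summand $U$. Since $U$ is an irreducible restricted $\widehat\fh$-module of level $\ell$ satisfying $\mathcal{C}_0$, Lemma~\ref{th:vacuum} produces a vacuum vector $w\in U$ on which the central operators $u(0)$, $u(-m)$ for $m\in p\BZ_+$, and $u(-n)^p$ for $n\in \BZ_+\setminus p\BZ_+$ act as scalars, say $\lambda_0(u)$, $\mu_m(u)$, and $\mu_n(u)^p$. By the second half of Proposition~\ref{th:L-irr}, we then have $U\cong L_{\widehat\fh}(\ell,\lambda_0,\mu)$. Reading off those scalars directly from Proposition~\ref{th:L-module-property} gives $\lambda_0=0$, $\mu_m(u)=0$ for $m\in p\BZ_+$, $\mu_n(u)^p=0$ for $n\in \BZ_+\setminus p\BZ_+$ with $n\ge 2$, and $\mu_1(u)^p=\lambda(u(-1))^p$. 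Because Frobenius is injective on the field $\BF$ (so $x^p=y^p$ forces $x=y$), these identities force $\mu$ to agree with $\lambda$ on all of $\widehat\fh_+$, whence $U\cong L_{\widehat\fh}(\ell,0,\lambda)$. Both conclusions of the theorem then follow at once.

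The computation is essentially bookkeeping once Propositions~\ref{th:L-irr} and~\ref{th:L-module-property} and Theorem~\ref{th:h-mod-reducibility} are in hand; the only step that is not completely automatic is the passage from a $p$-th power identity $\mu_n(u)^p=\lambda_n(u)^p$ to $\mu_n(u)=\lambda_n(u)$, which is where the field hypothesis is genuinely used. The same remark applies implicitly throughout the earlier irreducibility results, so no new obstacle arises here.
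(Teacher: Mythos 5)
Your proof is correct and follows essentially the same route as the paper: Proposition~\ref{th:L-module-property} to verify condition $\mathcal{C}_0$, Theorem~\ref{th:h-mod-reducibility} for complete reducibility, and Lemma~\ref{th:vacuum} together with Proposition~\ref{th:L-irr} to pin down each irreducible constituent as $L_{\widehat\fh}(\ell,0,\lambda)$. (The Frobenius-injectivity step you single out is not actually needed, since condition (iii) of Proposition~\ref{th:L-irr} is already stated in terms of the $p$-th powers $\lambda_n(u)^p$, which is exactly what Proposition~\ref{th:L-module-property} hands you.)
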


\begin{proof}
Let $W$ be any  $L_{\widehat\fh}(\ell,0,\lambda)$-module.
By Proposition~\ref{th:L-module-property},
$W$ is a restricted $\widehat{\fh}$-module of level $\ell$, satisfying condition $\mathcal{C}_0$.
Then by Theorem~\ref{th:h-mod-reducibility}, $W$ as an $\widehat{\fh}$-module is completely reducible.
Note that an $L_{\widehat\fh}(\ell,0,\lambda)$-submodule of $W$ is the same as an $\widehat{\fh}$-submodule.
Thus $W$ is a completely reducible $L_{\widehat\fh}(\ell,0,\lambda)$-module.
On the other hand, assume that $W$ is an irreducible $L_{\widehat\fh}(\ell,0,\lambda)$-module.
 In view of Lemma~\ref{th:vacuum}, $W$ contains a vacuum vector $w$.
By Propositions \ref{th:L-module-property} and \ref{th:L-irr},
we have $U(\widehat{\fh})w\simeq L_{\widehat\fh}(\ell,0,\lambda)$ and
consequently,  $W\simeq L_{\widehat\fh}(\ell,0,\lambda)$.
\end{proof}

\begin{remark}
Recall that in the case of characteristic zero, a vertex operator algebra $V$ is said to be {\em rational} if
every $\BN$-graded (namely admissible) $V$-module is completely reducible (see \cite{ZHU}, \cite{DLM}), and $V$ is
said to be {\em holomorphic} if $V$ is rational and if the adjoint module $V$ is the only irreducible module up to equivalence.
We also recall that a vertex algebra $V$ is {\em regular} if every $V$-module is completely reducible (see \cite{DLM1}, \cite{DY}).
In view of Lemma~\ref{th:heisenbergVOA}, Corollary \ref{th:simple}, and Theorem \ref{th:main},
vertex algebras $L_{\widehat\fh}(\ell,0,\lambda)$ with $\ell\in \BF^{\times},\ \lambda\in \Lambda$
are rational and holomorphic in a certain sense.
\end{remark}

For the rest of this section, we discuss the notion of vertex operator algebra.
We assume that $\BF$ is an algebraically closed field of characteristic $p>2$.
The definition of the notion of vertex operator algebra requires a slight modification
(see \cite{FLM}, \cite{FHL}, \cite{DR1}, \cite{DR2}):

\begin{definition}\label{def-voa}
A \emph{vertex operator algebra} is a vertex algebra $(V,Y,\1)$,
equipped a $\BZ$-grading
\begin{equation*}
    V=\bigoplus_{n\in\BZ} V_{(n)}
\end{equation*}
such that $\dim V_{(n)}<\infty$ for all $n$ and $V_{(m)}=0$ for $m$ sufficiently small,
and equipped with a vector  $\omega\in V_{(2)}$, called a \emph{conformal vector}, such that
\begin{equation}
    [L(m),L(n)]=(m-n)L(m+n)+\frac{1}{2}\binom{m+1}{3}\delta_{m+n,0}c_V
\end{equation}
for $m,n\in\BZ$, where
\begin{equation}
    Y(\omega,x)=\sum_{n\in\BZ}L(n)x^{-n-2}\ \left(=\sum_{n\in\BZ}\omega_n x^{-n-1}\right),
\end{equation}
and $c_V\in \BF$ (\emph{central charge} or \emph{rank} of $V$), and such that
\begin{align*}
    &L(0)v=n v \quad\text{for }\  v\in V_{(n)},\ n\in\BZ,
\end{align*}
$$L(-1)=\D^{(1)}\     \mbox{ on }V,$$
\begin{equation}
    u_nv\in V_{(s+t-n-1)} \quad\text{for }u\in V_{(s)}, \  v\in V_{(t)}, \  n,s,t\in\BZ.
\end{equation}
\end{definition}

Note that  $\widehat\fh$ is a $\BZ$-graded Lie algebra with $\deg {\bf k}=0$ and $\deg (\fh\otimes t^{n})=-n$ for $n\in \BZ$.
Then  $V_{\widehat\fh}(\ell,0)$ is a $\BZ$-graded $\widehat\fh$-module with
$\deg \1=0$ and $\deg \fh(n)=-n$ for $n\in \BZ$.
As in the case of characteristic zero, using the Segal-Sugawara construction
we have:

\begin{lemma}\label{th:heisenbergVOA}
Let $\ell\in \BF^{\times}$.
Then the vertex algebra $V_{\widehat\fh}(\ell,0)$
is a vertex operator algebra of central charge $d=\dim \fh$
with the conformal vector given by
\begin{equation*}
    \omega=\frac{1}{2\ell}\sum_{i=1}^d u^{(i)}(-1)u^{(i)}(-1)\1,
\end{equation*}
where $\{u^{(1)},\ldots,u^{(d)}\}$ is any orthonormal basis of $\fh$.
Furthermore, $\fh= V_{\widehat\fh}(\ell,0)_{(1)}$
and $\fh$ generates $V_{\widehat\fh}(\ell,0)$ as a vertex algebra,
and
\begin{equation*}
    [L(m),a(n)]=-na(m+n)\quad\text{for }a\in\fh, \ m,n\in\BZ.
\end{equation*}
\end{lemma}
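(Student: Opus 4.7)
The plan is to carry out the standard Segal--Sugawara construction, being careful about exactly which integer denominators must be invertible in $\BF$. The assumption $p>2$ ensures $\frac{1}{2\ell}\in\BF$, and the central-charge coefficient $\frac{1}{2}\binom{m+1}{3}$ makes sense since $\binom{m+1}{3}$ is an integer. First I would equip $V_{\widehat\fh}(\ell,0)=S(\widehat\fh_+)$ with the $\BN$-grading assigning $\deg u(-n)=n$ for $u\in\fh$, $n\ge 1$, and $\deg\1=0$. Finite-dimensionality of $\fh$ immediately gives $\dim V_{(n)}<\infty$ and $V_{(n)}=0$ for $n<0$, with $V_{(0)}=\BF\1$ and $V_{(1)}\cong\fh$. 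That $\fh$ generates $V_{\widehat\fh}(\ell,0)$ as a vertex algebra is immediate from $Y(u,x)=u(x)$ and the fact that $V_{\widehat\fh}(\ell,0)$ is spanned by iterated products of the $u(-n)=u_{n-1}$.

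Using the iterate/normal-ordered-product formula $Y(a_{-1}b,x)={:}Y(a,x)Y(b,x){:}$, which holds in any characteristic by weak associativity once $Y(a,x)$ and $Y(b,x)$ are mutually local, I get
\[
Y(\omega,x)=\frac{1}{2\ell}\sum_{i=1}^d{:}u^{(i)}(x)u^{(i)}(x){:}.
\]
Setting $L(n)=\omega_{n+1}$, the commutator $[L(m),a(n)]$ for $a\in\fh$ is computed by the usual Wick-style manipulation from $[u^{(i)}(k),a(n)]=k\ell\langle u^{(i)},a\rangle\delta_{k+n,0}$. The two contraction terms combine and the factor of $\ell$ cancels the $\frac{1}{2\ell}$ (using $p\ne 2$), yielding $[L(m),a(n)]=-n\,a(m+n)$. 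The Virasoro bracket $[L(m),L(n)]$ then follows by applying this commutator inside the quadratic expression defining $L(n)$. The Leibniz-type terms assemble to $(m-n)L(m+n)$, and a finite reordering anomaly contributes the central term
\[
\frac{m^{3}-m}{12}\,d\,\delta_{m+n,0}=\frac{1}{2}\binom{m+1}{3}d\,\delta_{m+n,0},
\]
which is well-defined in $\BF$ since $\binom{m+1}{3}$ is an integer and $\tfrac12\in\BF$.

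Finally I would verify $L(0)v=nv$ on $V_{(n)}$, $L(-1)=\D^{(1)}$, and the mode-grading compatibility. From $\omega_1\1=0$ (Lemma~\ref{rm:defVA}(1)) and $[L(0),u(-n)]=n\,u(-n)$, induction on the monomial length gives that $L(0)$ is the grading operator; the axiom $u_nV_{(t)}\subset V_{(s+t-n-1)}$ for $u\in V_{(s)}$ follows at once from the relation $[L(0),u_n]=(s-n-1)u_n$. For $L(-1)=\D^{(1)}$: both operators kill $\1$ (for $\D^{(1)}$ via the $\B$-action on $V_{\widehat\fh}(\ell,0)$, for $L(-1)=\omega_0$ by the vertex algebra axioms) and both satisfy $[\,\cdot\,,u(n)]=-n\,u(n-1)$ on $\fh$-modes; since $V_{\widehat\fh}(\ell,0)=U(\widehat\fh_+)\1$, these data determine the operators uniquely, so they agree. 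The main technical obstacle I expect is the bookkeeping in the central term: the coefficient emerges as $\sum_{k}k(m{+}1{-}k)$-type sums that, in characteristic zero, reduce via division by $6$ or $12$; I will need to rearrange the calculation so that these sums appear only through the integer-valued quantity $\binom{m+1}{3}$ (and the universally available factor $\tfrac12$), so that no further integer denominators beyond $2\ell$ are ever inverted.
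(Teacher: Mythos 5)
Your proposal is correct and follows exactly the route the paper intends: the paper gives no written proof, simply invoking the standard Segal--Sugawara construction "as in the case of characteristic zero," and your sketch supplies that construction with the right attention to the only denominators actually needed ($2\ell$, with the central term packaged as the integer $\binom{m+1}{3}$ times $\tfrac12$). No gaps; this is the argument the authors have in mind.
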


It can be readily seen that for $\ell\in \BF^{\times}$, the ideal $J(\ell,0)$  of $V_{\widehat{\fh}}(\ell,0)$
is  $\BZ$-graded. Then we immediately have:

\begin{corollary}
For any $\ell \in \BF^{\times}$,  the quotient vertex algebra
$L_{\widehat{\fh}}(\ell,0,0)$ is a simple vertex operator algebra.
\end{corollary}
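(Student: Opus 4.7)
The plan is to bootstrap off Lemma~\ref{th:heisenbergVOA} and Corollary~\ref{th:simple}: the former gives us a vertex operator algebra structure on $V_{\widehat{\fh}}(\ell,0)$, and the latter gives simplicity of the quotient as a vertex algebra. What remains is to check that the quotient inherits all of the extra data of a vertex operator algebra, i.e.\ that the $\BZ$-grading and the conformal vector $\omega$ pass to $L_{\widehat{\fh}}(\ell,0,0)$.

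First, I would recall the grading on $V_{\widehat{\fh}}(\ell,0)$: setting $\deg \1 = 0$ and $\deg u(-n) = n$ for $u \in \fh$, $n \in \BZ_+$, makes $V_{\widehat\fh}(\ell,0)=S(\widehat{\fh}_+)$ into a $\BN$-graded space with $\dim V_{\widehat{\fh}}(\ell,0)_{(n)}<\infty$, and by Lemma~\ref{th:heisenbergVOA} the conformal vector $\omega = \frac{1}{2\ell}\sum_{i=1}^d u^{(i)}(-1)u^{(i)}(-1)\1$ sits in degree $2$.

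Second, I would verify that $J(\ell,0)$ is a $\BZ$-graded $\widehat{\fh}$-submodule concentrated in degrees $\ge p$. With $\lambda = 0$, the generators in \eqref{egenerators} become $u(-m)^p\1$ for $m\in\BZ_+$ and $u(-n)\1$ for $n\in p\BZ_+$, which are homogeneous of degrees $mp \ge p$ and $n\ge p$ respectively. Since the operators $u(k)$ act homogeneously of degree $-k$ on $V_{\widehat{\fh}}(\ell,0)$, the $\widehat{\fh}$-submodule generated by these homogeneous vectors decomposes as
\begin{equation*}
    J(\ell,0) = \bigoplus_{n\ge p} \bigl(J(\ell,0)\cap V_{\widehat{\fh}}(\ell,0)_{(n)}\bigr).
\end{equation*}
Because the hypothesis $p>2$ gives $p\ge 3$, the conformal vector $\omega\in V_{\widehat{\fh}}(\ell,0)_{(2)}$ is not in $J(\ell,0)$, and in fact $J(\ell,0)\cap V_{\widehat{\fh}}(\ell,0)_{(n)}=0$ for all $n\le 2$.

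Third, since $J(\ell,0)$ is a graded ideal by Proposition~\ref{th:leftideal}, the quotient $L_{\widehat{\fh}}(\ell,0,0)$ inherits a $\BN$-grading with finite-dimensional homogeneous pieces, and the image $\bar\omega\ne 0$ serves as its conformal vector: the Virasoro relations, the identity $L(-1)=\D^{(1)}$, the $L(0)$-eigenvalue condition, and the compatibility $u_nv\in V_{(s+t-n-1)}$ all descend from $V_{\widehat{\fh}}(\ell,0)$ along the graded quotient map. Combining this with the simplicity of $L_{\widehat{\fh}}(\ell,0,0)$ as a vertex algebra (Corollary~\ref{th:simple}, applied to $0\in\Lambda$) yields the corollary. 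The only genuinely delicate point — and hence the main (mild) obstacle — is ensuring that $\omega$ survives the quotient, for which the hypothesis $p>2$ is exactly what is needed.
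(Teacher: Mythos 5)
Your proposal is correct and follows essentially the same route as the paper, which simply observes that $J(\ell,0)$ is $\BZ$-graded (its generators $u(-m)^p\1$ and $u(-n)\1$, $n\in p\BZ_+$, are homogeneous vacuum vectors of degree $\ge p$, so $J(\ell,0)=U(\widehat{\fh}_+)Q$ lives in degrees $\ge p>2$) and then invokes Lemma~\ref{th:heisenbergVOA} and Corollary~\ref{th:simple}. Your extra care about $\bar\omega\ne 0$ and the role of $p>2$ is a sound elaboration of the same argument.
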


\begin{remark}
Let $\ell\in \BF^{\times},\ \lambda\in \Lambda$ with $\lambda\ne 0$.
We see that the ideal $J(\ell,\lambda)$ is {\em not} $\BZ$-graded. In view of this, the quotient vertex algebra
$L_{\widehat{\fh}}(\ell,0,\lambda)$ is {\em not} a vertex operator algebra
in the sense of Definition \ref{def-voa}.
On the other hand, for $u\in \fh,\ n\in \BZ_{+}$, we have
$$L(0)u(-np){\bf 1}=(np)u(-np){\bf 1}=0,\   \   \   \  L(0)u(-n)^{p}{\bf 1}=(np)u(-n)^{p}{\bf 1}=0.$$
It follows that the ideal $J(\ell,\lambda)$ of $V_{\widehat{\fh}}(\ell,0)$ is $L(0)$-stable.
Then $L_{\widehat{\fh}}(\ell,0,\lambda)$ is  $\BZ_{p}$-graded
by the eigenvalues of $L(0)$, where the $L(0)$-eigenspaces are infinite-dimensional.
In view of this,
$L_{\widehat{\fh}}(\ell,0,\lambda)$ is a conformal vertex algebra in a certain sense.
\end{remark}

\end{document}